\newcommand{\ZZ}{\mathbb Z}
\newcommand{\RR}{\mathbb R}
\newcommand{\eps}{\varepsilon}
\newcommand{\inv}{^{-1}}
\newcommand{\Out}{\mathrm{Out}}
\newtheorem{theorem}{Theorem}[section]
\newtheorem{lemma}[theorem]{Lemma}
\newtheorem{corollary}[theorem]{Corollary}
\newtheorem{proposition}[theorem]{Proposition}
\theoremstyle{definition}
\newtheorem{definition}[theorem]{Definition}
\theoremstyle{remark}
\newtheorem{example}[theorem]{Example}
\title{Axis Bundles in Free-by-Cyclic Groups}
\author{Maxwell Plummer}
\address{Department of Mathematics, Rice University, Houston, TX}
\email{msp15@rice.edu}
\begin{document}

\begin{abstract}
    Given a splitting of a free-by-cyclic group, the associated monodromy acts on outer space preserving Handel and Mosher's ``axis bundle." We show that the property of a monodromy having a ``lone axis'' is non-generic in the sense that the associated splittings are projectively discrete in first cohomology. Additionally, we show that this discreteness statement cannot be promoted to a finiteness statement.
\end{abstract}

\thanks{The author was partially supported by NSF DMS-1745670.}

\maketitle

\section{Introduction}
The action of $\Out(F_N)$ on Culler and Vogtmann's outer space $CV_N$ \cite{CV86} is often understood by analogy with the action of the mapping class group of a surface on its Teichmüller space. The dynamically interesting mapping classes are those that are pseudo-Anosov, and the analog in $\Out(F_N)$ are the fully irreducible automorphisms, as studied by Bestvina--Handel \cite{BH92}. Pseudo-Anosov homeomorphisms translate along unique axes in Teichmüller space, while fully irreducible automorphisms may have multiple axes. For a nongeometric fully irreducible automorphism $\varphi:F_N\to F_N$, Handel and Mosher defined the {\em axis bundle} $\mathcal{A}_\varphi \subseteq CV_N$ which contains potentially many $\varphi$-invariant geodesics \cite{hm11axis_bundle}. Mosher and Pfaff \cite{mp16lone-axes} characterized when a free group automorphism has a {\em lone axis}.

The mapping torus of a pseudo-Anosov homeomorphism is a fibered 3-manifold $M$, and the fiber is dual to a class that lies in the cone over a face of the (polyhedral) Thurston normal ball in $H^1(M;\RR)$, see \cite{thurston-norm,Fried-cross-section} and \cite[Exposé 14]{FLP}. The primitive integral classes in this cone are all dual to fibers whose monodromy homeomorphisms are dynamically related via the suspension flow on $M$.
Fried \cite{Fried-stretch-factor} proved that the logarithm of their stretch factors extends to a homogeneous, convex function on this cone; this function can be computed by McMullen’s Teichmüller polynomial \cite{McMullen2000}.

For a free-by-cyclic group $G = G_\varphi = F_N\rtimes_\varphi \ZZ$, the cone over a component of the BNS invariant \cite{BNS87} is the analog of the cone over a fibered face. If $\varphi$ is fully-irreducible, then so is the monodromy for any splitting defined by a point of the cone by work of Dowdall--Kapovich--Leininger \cite{DKL3,DKL2}, who further related various features of these monodromies, see also \cite{AKHR}. In this paper, we prove that having a lone axis is {\em not} a feature of the monodromies that is shared.

\begin{theorem} \label{thm:main-theorem-intro-version}
    Let $\varphi$ be a fully irreducible nongeometric automorphism of $F_N$ with a lone axis in outer space. Let $G = F_N\rtimes_\varphi \ZZ$ and $r^*\in H^1(G;\RR)$ the class whose associated monodromy is $\varphi$. There is an open cone $C\subseteq H^1(G;\RR)$ containing $r^*$ such that the only lone axis monodromy associated to a primitive integral point of $C$ is $\varphi$. Alternatively, classes with lone axis monodromies are discrete in the projectivization $H^1(G;\RR)\setminus \{0\}/\RR_+$.
\end{theorem}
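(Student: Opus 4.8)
The plan is to combine the Mosher--Pfaff combinatorial characterization of lone axes \cite{mp16lone-axes} with the Dowdall--Kapovich--Leininger ``folded mapping torus'' model of a BNS cone \cite{DKL2,DKL3}, exploiting that the latter is governed by a single semiflow while having a lone axis is an \emph{extremal} condition on the ideal Whitehead graph.

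\textbf{Step 1 (reduce to one cone; recall the criterion).} The class $r^*$ lies in a component $\mathcal C$ of the BNS invariant of $G$, which is an open cone, and by \cite{DKL2,DKL3} the monodromy $\varphi_u$ of every primitive integral $u\in\mathcal C$ is fully irreducible, so the Mosher--Pfaff criterion applies throughout $\mathcal C$. Since the axis bundle of $\varphi_u$ coincides with that of any power, we may argue with rotationless representatives, and since we only need an open neighborhood of $r^*$ it suffices to show that $r^*$ is the unique lone-axis class in a neighborhood of $r^*$ inside $\mathcal C$. Recall \cite{mp16lone-axes}: a rotationless fully irreducible automorphism of $F_r$ has a lone axis precisely when it is ageometric and its ideal Whitehead graph is connected with the maximal number $2r-1$ of vertices --- equivalently, when its rotationless index attains the minimum $\tfrac32-r$. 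In particular this forces ageometricity, so ``nongeometric'' in the hypothesis merely records that $G$ is not a $3$-manifold group.

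\textbf{Step 2 (the flow picture).} Form the folded mapping torus $X$: a compact $2$-complex with semiflow $\psi_t$ whose cross-sections $\Theta_u$, for primitive integral $u\in\mathcal C$, are finite graphs of rank $N_u$ carrying the first-return map as an expanding train-track representative of $\varphi_u$ \cite{DKL2,DKL3}. The attracting lamination suspends to a $\psi_t$-invariant $2$-dimensional lamination $\widehat\Lambda\subseteq X$ whose singular locus is a finite union of periodic orbits $\gamma_1,\dots,\gamma_m$ with flow-invariant prong numbers $k_1,\dots,k_m$ and flow-invariant local Whitehead graphs. Cutting $\widehat\Lambda$ along $\Theta_u$ recovers $\Lambda_{\varphi_u}$: its singular points are the $\sum_i u([\gamma_i])$ points of $\Theta_u\cap(\gamma_1\cup\dots\cup\gamma_m)$, a point on $\gamma_i$ contributing a $k_i$-vertex block to the ideal Whitehead graph $\mathcal{IW}(\varphi_u)$, with intra-block edges given by the local Whitehead graph of $\gamma_i$ and inter-block edges coming from the finitely many types of leaf segments of $\widehat\Lambda$ running between distinct singular orbits. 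Finally $-\chi(\Theta_u)$ is the restriction of a Thurston-type norm, hence linear in $u$, so $N_u=1-\chi(\Theta_u)$ is affine.

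\textbf{Step 3 (the crux) and conclusion.} Now interpret the lone-axis condition on $\mathcal C$. By Step 2 the vertex count of $\mathcal{IW}(\varphi_u)$ is $\sum_i k_i\,u([\gamma_i])$, a linear function of $u$, whereas the ceiling $2N_u-1$ is affine; the two agree at $r^*$, so maximality of $\mathcal{IW}(\varphi_u)$ already confines $u$ to an affine hyperplane $H$. On $H$ one still needs $\mathcal{IW}(\varphi_u)$ connected, and this is the main obstacle: I expect to show that for $u\in H$ near $r^*$ with $u\neq r^*$ the cross-section $\Theta_u$ meets some singular orbit $\gamma_i$ more than once, so $\gamma_i$ alone contributes at least two blocks to $\mathcal{IW}(\varphi_u)$, and that once the number of blocks exceeds what the bounded supply of inter-block leaf-segment types can join, $\mathcal{IW}(\varphi_u)$ must be disconnected, so $\varphi_u$ has no lone axis. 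Making this precise --- quantifying ``enough blocks,'' tracking the inter-block edges of $\mathcal{IW}(\varphi_u)$ as the cross-section sweeps through $X$, and handling the bookkeeping needed to pass between $\varphi_u$ and its rotationless power --- is where the completely split train-track machinery and a careful study of ideal Whitehead graphs must be brought in. Granting this, intersecting with $\mathcal C$ produces an open cone $C\ni r^*$ whose only lone-axis primitive integral class is $r^*$; running the argument at every lone-axis class gives discreteness in $\mathbb P H^1(G;\RR)$, and the non-compactness of a projectivized BNS component leaves room for this discrete set to be infinite, consistent with the companion non-finiteness statement.
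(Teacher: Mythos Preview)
Your proposal rests on an incorrect statement of the Mosher--Pfaff criterion. In Step~1 you assert that a rotationless fully irreducible $\psi\in\Out(F_r)$ has a lone axis if and only if $\mathcal{IW}(\psi)$ is \emph{connected} with $2r-1$ vertices, and that this is equivalent to $i(\psi)=\tfrac32-r$. The second equivalence fails: the index condition $i(\psi)=k-\tfrac12\sum m_i=\tfrac32-r$ (with $k$ the number of components and $m_i$ their vertex counts) is satisfied whenever $\sum m_i=2k+2r-3$, which allows $k>1$. Indeed, the paper's own running example (rank~3) has $\mathcal{IW}(\varphi)$ equal to three disjoint triangles, and its generalization $\varphi_k$ (rank $k+3$) has $\mathcal{IW}(\varphi_k)$ equal to $2k+3$ disjoint triangles---all lone axis, all with disconnected ideal Whitehead graphs. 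The actual criterion is $i(\psi)=\tfrac32-r$ together with \emph{no component of $\mathcal{IW}(\psi)$ having a cut vertex}; connectedness plays no role. This undermines Step~3 entirely: showing that $\mathcal{IW}(\varphi_u)$ acquires multiple ``blocks'' does nothing to rule out a lone axis, and your proposed mechanism (too many blocks for the inter-block edges to connect) is aimed at a non-obstruction.

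The paper's argument bypasses the ideal Whitehead graph altogether. It works directly with the trapezoidal cell structure on the folded mapping torus $X$: each intersection of a cross section $\Theta_\beta$ with a \emph{skew $1$-cell} produces an illegal turn of the first-return train track map at a valence-three vertex (Lemma~3.4). A separate, elementary lemma (Lemma~3.3) shows that $n$ illegal turns at distinct valence-three vertices force an embedded $n$-cube (hence local dimension $\geq n-2$) in $\mathcal{A}_{\varphi_\beta}$, by simultaneously folding at all of them. One then simply chooses the cone $C$ narrow enough that any primitive integral $\beta\in C\setminus\RR_+r^*$ has cocycle value exceeding $k+2$ on a fixed skew $1$-cell, giving $\dim\mathcal{A}_{\varphi_\beta}\geq k$. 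This yields the stronger Theorem~3.5 (discreteness of classes with $\dim\mathcal{A}\leq k$ for any $k$) with no lamination or index bookkeeping. Your hyperplane observation in Step~3 is morally related to the paper's Section~4, where the constraint $\alpha(s)=1$ on the skew loop $s$ cuts out exactly the affine line on which lone-axis classes can live---but that is used there only to organize the example, not to prove the discreteness theorem.
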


In fact, our methods lend themselves to the proof of a stronger theorem. In \Cref{section:proof-main-theorem} we show that splittings for which the monodromy has axis bundle of dimension at most any fixed number $k$ are also discrete in $H^1(G;\RR)\setminus \{0\}/\RR_+$, see \Cref{thm:projectively-discrete}. Additionally, we show that in general it is impossible to improve the statement from a discreteness result to a finiteness result. Indeed, we explicitly construct a free-by-cyclic group $G$ that has infinitely many lone axis monodromies among splittings of $G$, see \Cref{thm:extended-example}.

The relationship between 3-manifolds and free-by-cyclic groups can be seen in Dowdall--Kapovich--Leininger's \textit{folded mapping torus} $X$ associated to $[\varphi]\in\Out(F_N)$ \cite{DKL1}. The space $X$ has a suspension semiflow, which dynamically relates splittings of $G$ in the same manner as for 3-manifolds. It was using this machinery that Dowdall--Kapovich--Leininger were able to prove invariance of full irreducibility across a component of $\RR_+\cdot \Sigma(G)$. This was extended to all splittings of $G$ by Mutanguha using other methods \cite{mut21}.

One interpretation of \Cref{thm:main-theorem-intro-version} is that lone axis automorphisms are non-generic among monodromies of free-by-cyclic groups $G$ with $\mathrm{dim} (H^1(G;\RR)) \geq 2$. This is in contrast to a result of Kapovich--Pfaff that there is a `train track directed' random-walk in $\Out(F_N)$ under which lone axis automorphisms are generic \cite{KP15-rand-walk}. The question of whether lone axis automorphisms are generic for other random walks appears to be open, though the authors of \cite{KMPT-random-walk} believe the answer should be negative.

The original motivation leading to this paper was an attempt to construct a canonical folding sequence associated to a component of the cone $\RR_+\cdot (\Sigma (G) \cap -\Sigma(G))$. This was intended to form some sort of analog of the {\em veering triangulations} constructed by Agol \cite{agol2011}; the construction is defined in terms of a canonical splitting sequence associated to a monodromy. These triangulations depend only on the fibered face of the Thurston norm ball. To obtain such a free-by-cyclic group analog, the hope was that the property of having a lone axis was invariant within this component, which turned out to be far from the case. For a given nongeometric fully irreducible $[\varphi]\in\Out(F_N)$, Pfaff and Tsang have very recently constructed a finite collection of canonical fold lines associated to $\varphi$ \cite{pfaff-tsang-cubist}. The author intends to investigate the possibility of a canonical folding sequence associated to a cone in future work.

In \Cref{section:background}, we give necessary background on $\Out(F_N)$, the axis bundle, and the folded mapping torus. We also make a slight modification to the cell structure on the folded mapping torus. We prove \Cref{thm:main-theorem-intro-version} and its generalization \Cref{thm:projectively-discrete} in \Cref{section:proof-main-theorem}. Finally, in \Cref{section:extended-example} we show that the projective discreteness of \Cref{thm:main-theorem-intro-version} cannot be promoted to finiteness by considering a particular example in some detail.

\textbf{Acknowledgments.} The author would like to thank their Ph.D. advisor Chris Leininger for suggesting the original version of this problem and for providing helpful feedback on the numerous versions of this paper. Thanks also to Robbie Lyman for a helpful comment on an earlier draft.

\section{Background} \label{section:background}

\subsection{Free Group Automorphisms}

Let $F_N$ be the free group of rank $N$. An automorphism $\varphi$ of $F_N$ is {\em fully irreducible} if no nontrivial conjugacy class of a proper free factor is periodic under $\varphi$. It is {\em atoroidal} if there is no $n\geq 1$ and $1\neq w\in F_N$ such that $\varphi^n(w)$ is conjugate to $w$. Some automorphisms are induced by homeomorphisms of punctured surfaces; these are the \textit{geometric} automorphisms. Automorphisms which are not geometric are called \textit{nongeometric}. It is a result of Bestvina--Handel \cite{BH92} that a fully irreducible automorphism is atoroidal if and only if it is not induced by a pseudo-Anosov homeomorphism on a surface with one boundary component.

Associated to a fully irreducible automorphism $\varphi$ is the attracting tree $T_+$ and the repelling tree $T_-$. A result proved independently by Handel--Mosher and Guirardel \cite{hm07parageom,guirardel-core} is that $\varphi$ is geometric if and only if both $T_+$ and $T_-$ are geometric in the sense that they are dual to a measured lamination on a 2-complex. The class of fully irreducible nongeometric automorphisms is divided into the subclasses {\em parageometric} and {\em ageometric} in terms of the atracting tree $T_+$, see \cite{hm07parageom}. The parageometric outer automorphisms are the nongeometric outer automorphisms that have geometric attracting trees. The ageometric automorphisms are the fully irreducible nongeometrics that are not parageometric. Note that by the result mentioned above, the inverse of a parageometric automorphism is ageometric.

\subsection{Graph Maps}

We follow conventions of \cite{BH92} for this section, with some modifications as in \cite{DKL1} for the inclusion of valence two vertices. We consider connected finite graphs $\Gamma$ without valence one vertices. Denote by $V(\Gamma), E(\Gamma)$ the vertex set and edge set of $\Gamma$, respectively. A {\em graph map} $f: \Gamma\to\Gamma$ sends vertices to vertices and maps edges across edge paths.

A {\em train track map} $f:\Gamma\to\Gamma$ is a graph map such that $f^n$ is locally injective on the interior of all edges $e\in E(\Gamma)$ for all $n\geq 1$. An alternative description of this condition is in terms of {\em illegal turns}, which we now describe. A {\em direction} at a vertex $v\in \Gamma$ is a germ of an oriented edge incident to $v$. The map $f$ induces a map on directions $Df$; we call a direction {\em periodic} if it is fixed by some power of $Df$. A {\em turn} at $v$ is an unordered pair of directions; a turn is {\em degenerate} if it consists of the same direction twice. We call a turn $\{e_i,e_j\}$ {\em illegal} if $\{Df^n (e_i), Df^n (e_j)\}$ is degenerate for some $n\geq 1$. A graph map $f$ is a train track map if it does not map the interior of any edge across an illegal turn. We call an edge path in $\Gamma$ {\em legal} if does not cross an illegal turn.

Identify $F_N$ with $\pi_1(R_N)$, where $R_N$ is the graph with one vertex and $N$ edges. A {\em marked graph} is a graph $\Gamma$ with a homotopy equivalence $\iota: R_N\to \Gamma$ which we call a {\em marking}. We say a train track map $f: \Gamma\to \Gamma$ represents an automorphism $\varphi: F_N\to F_N$ if $f$ induces $\varphi$ under the marking (this is only defined up to conjugacy in $F_N$, so we typically consider representatives of outer automorphisms).

If we enumerate the edges of $\Gamma$ by $e_1, \dots, e_k$, we can associate to a graph map $f$ a transition matrix $A$. The entry $a_{ij}$ is the number of times $e_i$ crosses over $e_j$ under $f$, not considering orientations. If for any pair $(i,j)$ there exists $n$ such that the $(i,j)$-entry of $A^n$ is positive, we say $A$ is irreducible. A train track map is irreducible if its transition matrix is irreducible. Geometrically, for each $i,j$, there is an $n$ such that $f^n(e_i)$ maps over $e_j$.

Suppose $f:\Gamma\to\Gamma$ is an irreducible train track map, the transition matrix $A$ has a unique positive (left) eigenvector $(x_i)$ up to scaling that corresponds to the largest eigenvalue $\lambda$ of $A$. We call $\lambda$ the {\em stretch factor} of $f$. If $\lambda > 1$, $f$ is called an {\em expanding irreducible} train track map. Any train track map representing a fully irreducible automorphism is expanding irreducible.

\begin{example}[Running Example] \label{ex:automorphism}
    We introduce an example which we will consider throughout this section and in more depth in \Cref{section:extended-example}. Let $\varphi: F_3\to F_3$ be the automorphism defined by $a\mapsto ca, b\mapsto ab, c\mapsto b\inv a b$. An expanding irreducible train track representative $f$ for $\varphi$ is depicted in \Cref{fig:train-track-representative}; the graph $\Gamma$ is marked by a map from $R_3$ given by $a\mapsto ea, b\mapsto bd\bar{e}, c\mapsto e\bar{d} c d \bar{e}$. The automorphism $\varphi$ is ageometric and fully irreducible, which was verified in SAGE using the package \cite{coulbois-train-track} via the algorithm for detecting full irreducibility of \cite{kap14iwip}.
\end{example}

\begin{figure}
    \centering
\begin{tikzpicture}[scale=0.8]

    \draw[decoration={markings, mark=at position 0.5 with {\arrow{<}}},
    postaction={decorate}
    ](-3,-1) arc[start angle=-90, end angle = 90, radius = 1cm];
    \node at (-2,0) [right]{e};

    \draw[decoration={markings, mark=at position 0.5 with {\arrow{<}}},
    postaction={decorate}
    ](-3,1) arc[start angle=90, end angle = 270, radius = 1cm];
        \node at (-4,0) [left]{a};

    \draw[decoration={markings, mark=at position 0.5 with {\arrow{>}}}, postaction={decorate}] (-3,1) .. controls (-3,2) and (0,1) .. (0,0);
        \node at (-1.5, 1.2) [above]{b};

    \draw[decoration={markings, mark=at position 0.5 with {\arrow{<}}},
    postaction={decorate}] (-3,-1) .. controls (-3, -2) and (-1, 0) .. (0,0);
        \node at (-1.5, -0.6) [below]{d};
    
    \draw[decoration={markings, mark=at position 0.0 with {\arrow{>}}},
        postaction={decorate}] (1,0) circle(1cm);
        \node at (2,0)[right]{c};

    \node at (-3,1)[circle,fill,red, minimum size = 0.2cm, inner sep = 0pt]{};
    \node at (-3, -1)[circle,fill, minimum size = 0.2cm, inner sep = 0pt]{};
    \node at (0,0)[circle,fill,blue, minimum size = 0.2cm, inner sep = 0pt]{};
\end{tikzpicture}
\qquad
\begin{tikzpicture}[scale=0.8]

    \node at (-2,0)[right]{d};
    \draw[decoration={markings, mark=at position 0.5 with {\arrow{>}}},
    postaction={decorate}
    ](-3,-1) arc[start angle=-90, end angle = 90, radius = 1cm];

        \draw[decoration={markings, mark=at position 0.7 with {\arrow{<}}},
        postaction={decorate}
        ](-3,1) arc[start angle=90, end angle = 135, radius = 1cm];
        
        \draw[decoration={markings, mark=at position 0.5 with {\arrow{<}}},
        postaction={decorate}
        ]({cos(135)-3},{sin(135)}) arc[start angle=135, end angle = 180, radius = 1cm];
        \draw[decoration={markings, mark=at position 0.5 with {\arrow{<}}},
        postaction={decorate}
        ](-4,0) arc[start angle=180, end angle = 225, radius = 1cm];
        \draw[decoration={markings, mark=at position 0.3 with {\arrow{<}}},
        postaction={decorate}
        ]({cos(225)-3},{sin(225)}) arc[start angle=225, end angle = 270, radius = 1cm];
        \node at (-3.89,0.81) [above]{e};
        \node at (-3.96,0.31) [left]{a};
        \node at (-3.96,-0.31) [left]{d};
        \node at (-3.89,-0.81) [below]{c};
    
    \draw[decoration={markings, mark=at position 0.5 with {\arrow{>}}}, postaction={decorate}] (-3,1) .. controls (-3,2) and (0,1) .. (0,0);
        \node at (-1.5, 1.2) [above]{a};
    \draw[decoration={markings, mark=at position 0.5 with {\arrow{<}}},
    postaction={decorate}] (-3,-1) .. controls (-3, -2) and (-1, 0) .. (0,0);
        \node at (-1.5, -0.6) [below]{b};

    \draw[decoration={markings, mark=at position 0.5 with {\arrow{>}}},
        postaction={decorate}] (0,0) arc[start angle = -180, end angle = 0, radius = 1cm];
    \draw[decoration={markings, mark=at position 0.5 with {\arrow{>}}},
        postaction={decorate}] (2,0) arc[start angle = 0, end angle = 180, radius = 1cm];
        \node at (1,1)[above]{a};
        \node at (1,-1)[below]{e};

    \node at (-3,1)[circle,fill,red, minimum size = 0.2cm, inner sep = 0pt]{};
    \node at (0,0)[circle,fill,blue, minimum size = 0.2cm, inner sep = 0pt]{};
    \node at (-3, -1)[circle,fill, minimum size = 0.2cm, inner sep = 0pt]{};
\end{tikzpicture}

\caption{Train track representative $f:\Gamma\to\Gamma$ of the automorphism $\varphi$ from \Cref{ex:automorphism}. The graph on the left is the graph $\Gamma$. On the right is the same graph, now labeled with the images of the edges under $f$. Note that this map has a unique illegal turn $\{\bar{b},\bar{c}\}$ at the blue vertex.}
    \label{fig:train-track-representative}
\end{figure}
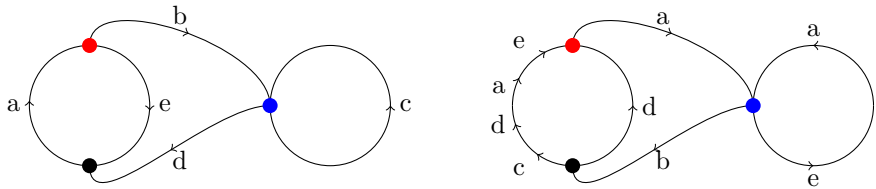

\subsection{Stallings Fold Decompositions} \label{subsec:stallings-fold}
Let $f: \Gamma\to\Gamma$ be a train track map. We denote by $\Gamma_0$ the subdivision of $\Gamma$ obtained by adding vertices to each point of $f\inv(V(\Gamma))$. We refer to $\Gamma_0$ as the {\em Stallings subdivision} of $\Gamma$. We label each oriented subdivided edge $e\in E(\Gamma_0)$ by the oriented image of $e$ under $f$.

Suppose two edges $e_1,e_2\in E(\Gamma_0)$ have a common initial vertex $v$ and share the oriented edge label $a$. That is, $f(e_1) = f(e_2) = a$ and in particular $\{e_1,e_2\}$ is an illegal turn. In this case we say $e_1$ and $e_2$ `carry the same edge label.' Then we can define a quotient map $q_1: \Gamma_0\to \Gamma_1$ which identifies $e_1$ and $e_2$ preserving the edge labels. We call such a map a {\em fold}. The {\em Stallings fold decomposition} of $f$ \cite{stallings-fold} is the factorization
\[
    \Gamma\xrightarrow{\pi}\Gamma_0 \xrightarrow{q_1} \Gamma_1 \xrightarrow{q_2} \dots \xrightarrow{q_k} \Gamma_k \xrightarrow{h} \Gamma
\]
where each $q_i$ is a single fold, $h: \Gamma_k\to \Gamma$ is a homeomorphism, and $\pi:\Gamma\to\Gamma_0$ is the `identity' map to the subdivision. Explicitly, $f = h q_k \cdots q_1 \pi$. Note that the sequence of folds $q_i$ is in general not unique.

\begin{example}
The train track map from \Cref{ex:automorphism} has a folding sequence consisting of four folds. Viewing the graph on the right of \Cref{fig:train-track-representative}, there is an illegal turn at the blue vertex, where there are two incoming $a$ edges, so these may be folded together. The remaining folds in the sequence identify $e$ edges, $a$ edges again, then $d$ edges. In this case, the folding sequence is unique since there is only one choice of fold at any stage.
\end{example}

\subsection{Outer Space}
A metric graph is a graph $\Gamma$ with a path metric whose edges are locally isometric images of intervals $I\subseteq \RR$; alternatively, it is an assignment of a positive length to each edge of $\Gamma$, encoded by a map $\ell: E(\Gamma) \to \RR_+$. The {\em volume} of $\Gamma$ is the sum of the edge lengths. Outer space, denoted $CV_N$, is the space of equivalence classes of marked metric graphs $\Gamma$ of volume one, we denote the marking $\iota:R_N\to \Gamma$ (recall $\iota$ is a homotopy equivalence). Alternatively, by taking universal covers, $CV_N$ is sometimes thought of as a space of actions of $F_N$ on metric simplicial trees. Outer space was introduced by Culler and Vogtmann in \cite{CV86}, see also the survey article by Vogtmann \cite{Vog15}. The space $CV_N$ has a nice simplicial structure (with missing faces). Each marked topological graph $\Gamma$ without valence two vertices defines a simplex of dimension $|E(\Gamma)| - 1$, where the correspondence is via the possible volume one length functions $\ell: E(\Gamma)\to \RR_+$. The faces of a simplex correspond to an assignment of length 0 to some edges of $\Gamma$ and the `missing faces' are those which would have assigned length 0 to a loop of $\Gamma$.

If $f:\Gamma\to\Gamma$ is a train track map representing a fully irreducible automorphism $\varphi$, then $f$ induces a natural metric structure on $\Gamma$. If $(x_i)$ is a positive eigenvector of the transition matrix $A$, then we assign the length $x_i$ to the edge $e_i$ and scale so that the volume of $\Gamma$ is one. We call this metric the \textit{eigenmetric} induced by $f$. On $\Gamma$ with this metric we can take $f$ to expand legal paths uniformly according to the stretch factor $\lambda$. When $\Gamma$ has been endowed with this metric and $f$ expands edges uniformly by $\lambda$ we call $f$ a \textit{metric train track map}.

A common metric studied on $CV_N$ is the {\em Lipschitz Metric} \cite{white-thesis,FM11lipschitz}, which is non-symmetric. If $(\Gamma,\iota), (\Gamma',\iota')\in CV_N$, then the Lipschitz distance between them is $d(\Gamma,\Gamma') = \inf_{f} \log(L(f))$, where $f: \Gamma\to \Gamma'$ satisfies $f\iota = \iota'$ and $L(f)$ is the Lipschitz constant of $f$. The outer space $CV_N$ with the Lipschitz metric is geodesic; one class of geodesics of $CV_N$ with this metric are the {\em folding paths} or {\em fold lines}. Locally, traveling along a fold line means identifying increasingly long segments of certain edges at the same vertex together; these arise naturally in the case that those edges form an illegal turn of a train track representative $f$ of an automorphism $\varphi: F_N\to F_N$. In this case, a fold line can be seen as a sort of continuous version of a Stallings fold decomposition.

The group $\Out(F_N)$ has a right action on $CV_N$ by change of markings. If $[\varphi]\in \Out(F_N)$, we can abuse notation by considering $\varphi$ to be a graph map $R_N\to R_N$. Then the action is defined by $(\Gamma, \iota)\cdot [\varphi] = (\Gamma, \iota\varphi)$. This action is an isometric action with respect to the Lipschitz metric, and in particular it preserves the fold lines. In the trees characterization of $CV_N$, the nonsimplicial trees $T_+$ and $T_-$ can be considered to live in $\partial CV_N$ \cite{CM87,Paulin}.

\subsection{Axis Bundles and Ideal Whitehead Graphs} \label{sec:axis-bundle}
For a nongeometric fully irreducible $\varphi\in\Out(F_N)$, let $TT(\varphi)\subseteq CV_N$ be the set of marked metric graphs on which there exists a metric train track representative of $\varphi$. The following is one of three equivalent definitions for the axis bundle introduced in \cite{hm11axis_bundle}.

\begin{definition}
    The {\em axis bundle} $\mathcal{A}_\varphi$ of a nongeometric automorphism $\varphi$ is
    the closure of $\bigcup_{n=1}^\infty TT(\varphi^n)$.
\end{definition}

Another characterization of $\mathcal{A}_\varphi$ given in \cite{hm11axis_bundle} is the union of all fold lines in $CV_N$ that limit to $T_+,T_-\in \partial CV_N$ in forwards and backwards time, respectively. Under that framework, one can see that $\mathcal{A}_\varphi$ is actually a collection of geodesic axes invariant under the action of $\varphi$.

An automorphism $\varphi$ is said to have a {\em lone axis} if $\mathcal{A}_\varphi$ is homeomorphic to $\mathbb{R}$. Handel and Mosher prove in \cite{hm11axis_bundle} that parageometric automorphisms do not have lone axes, so the study of lone axis automorphisms is reduced to the ageometric setting. 

In \cite{mp16lone-axes}, Mosher and Pfaff give necessary and sufficient conditions for an automorphism to have a lone axis in terms of the `ideal Whitehead graph.' We now describe this graph and these conditions and will use them to verify that the automorphism of \Cref{ex:automorphism} has a lone axis.

Let $\varphi: F_N\to F_N$ be a nongeometric fully irreducible automorphism with train track representative $f:\Gamma\to\Gamma$. A {\em Nielsen path} in $\Gamma$ is an edge path $\rho$ such that $f(\rho)$ is homotopic to $\rho$ rel endpoints; an {\em indivisible} Nielsen path is a Nielsen path that cannot be written as a nontrivial concatenation of Nielsen paths. Likewise, a {\em periodic Nielsen path} is an edge path that is a Nielsen path of $f^n$ for some $n$, as in $f^n(\rho)$ is homotopic rel endpoints to $\rho$, and a {\em periodic indivisible Nielsen path} is a periodic Nielsen path that cannot be written as a nontrivial concatenation of periodic Nielsen paths. There are only finitely many periodic indivisible Nielsen paths in $\Gamma$ \cite[Lemmas 4.2.5-6]{BFH00}, so after a possible subdivision of $\Gamma$, we assume that each such endpoint is in $V(\Gamma)$. After such a subdivision, we call a vertex $v\in V(\Gamma)$ {\em principal} if $v$ has at least three periodic directions or $v$ is the endpoint of a periodic Nielsen path.

We will now give a description of the ideal Whitehead graph of $\varphi$ which will serve as a definition for us in the special case that $f$ admits no periodic Nielsen paths. Let $v\in V(\Gamma)$ be a principal vertex. The {\em local Whitehead graph at $v$}, $\mathcal{LW}(\Gamma,v)$, has a vertex for each direction at $v$ and an edge connecting directions when that turn is taken, that is, an edge of $\Gamma$ maps over it under $f^n$ for some $n\geq 1$. The {\em local stable Whitehead graph at $v$,} $\mathcal{SW}(\Gamma,v)$, is the full subgraph of $\mathcal{LW}(\Gamma,v)$ with vertices the periodic directions at $v$. The {\em ideal Whitehead graph} of $\varphi$, $\mathcal{IW}(\varphi)$, is a union of $\mathcal{SW}(\Gamma,v)$ over all $v\in V(\Gamma)$; when $f$ has no periodic Nielsen paths, the union is a disjoint union.

The above description suffices for our purposes, but we briefly recall Handel and Mosher's definition \cite{hm11axis_bundle}, see also \cite[Section 2.9]{pfaff-thesis}. Starting by `realizing' the leaves of the {\em expanding lamination} \cite{BFH97} of $\varphi$ in the attracting tree $T_+$, the graph $\mathcal{IW}(\varphi)$ records how leaves of the lamination cross branch points of $T_+$. Let $p:\tilde \Gamma \to \Gamma$ be the universal cover, the train track map $f:\Gamma\to\Gamma$ determines a map $f_+: \tilde{\Gamma}\to T_+$ which may identify distinct points $\tilde{v},\tilde{w}\in \tilde{\Gamma}$. If this map also identifies directions at $\tilde{v}$ and $\tilde{w}$, then the corresponding vertices of $\mathcal{SW}(\Gamma,p(\tilde v))$ and $\mathcal{SW}(\Gamma,p(\tilde w))$ are identified inside of $\mathcal{IW}(\varphi)$. Lemma 3.1 of \cite{hm11axis_bundle} implies that this situation only occurs when there is an indivisible Nielsen path between $p(\tilde v)$ and $p(\tilde w)$.

Suppose $\mathcal{IW}(\varphi)$ has $k$ components, and the $i$th component has $m_i$ vertices. We define the {\em rotationless index} of $\varphi$ to be the sum
\[
    i(\varphi) = \sum_{i=1}^k 1 - \frac{m_i}{2}.
\]
We can now state the lone axis criteria of \cite{mp16lone-axes}.

\begin{theorem}[\cite{mp16lone-axes}, Theorem 4.7] \label{thm:mp16-lone-axis-criteria}
    Let $\varphi\in \Out(F_N)$ be ageometric and fully irreducible. The automorphism $\varphi$ has a lone axis in outer space if and only if the following conditions hold:
    \begin{enumerate}
        \item The rotationless index $i(\varphi) = 3/2 - N$,
        \item No component of the ideal Whitehead graph has a cut vertex.
    \end{enumerate}
\end{theorem}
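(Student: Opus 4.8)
The plan is to reduce the question to a rigid combinatorial model and then read the local dimension of $\mathcal{A}_\varphi$ off the ideal Whitehead graph. First I would replace $\varphi$ by a rotationless power, which is harmless since $\mathcal{A}_{\varphi^k}=\mathcal{A}_\varphi$ and both hypotheses are power-invariant, and fix a stable rotationless train track representative $f\colon\Gamma\to\Gamma$ with its eigenmetric; this gives a distinguished train track point $x_0\in TT(\varphi)$ and a distinguished fold line $L_f\subseteq\mathcal{A}_\varphi$ through $x_0$ along which $\varphi$ translates. Using the Handel--Mosher description of $\mathcal{A}_\varphi$ as the union of all fold lines limiting to $T_-$ and $T_+$, the theorem amounts to: $L_f$ is the \emph{only} such fold line, equivalently $\overline{\bigcup_n TT(\varphi^n)}$ is one-dimensional. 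The strategy is to show that fold lines of $\mathcal{A}_\varphi$ through a train track point $x$ correspond to ``local combinatorial train track structures at $x$ compatible with a power of $\varphi$'', and that the number of these and the way the corresponding lines branch is governed by $\mathcal{IW}(\varphi)$ and its cut vertices.

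The combinatorial backbone is the index inequality for ageometric fully irreducibles, $i(\varphi)\ge \tfrac32-N$, with equality exactly when $\mathcal{IW}(\varphi)$ is connected with the maximal possible number $2N-1$ of vertices — this is the Gaboriau--Jaeger--Levitt--Lustig index bound for $T_+$ rephrased, which I would invoke rather than reprove. Thus condition (1) is equivalent to ``$\mathcal{IW}(\varphi)$ connected with $2N-1$ vertices,'' and in that case comparing the count of periodic directions with the Euler characteristic of $\Gamma$ forces $\Gamma$ to have a single principal vertex carrying all $2N-1$ periodic directions, and forces $f$ to have no periodic Nielsen paths — a periodic Nielsen path would glue a stable-Whitehead vertex of one principal vertex to one of another, producing a cut vertex of $\mathcal{IW}(\varphi)$ and contradicting (2), modulo a separate check of the degenerate case where one of the stable Whitehead graphs is a single point.

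For the implication (1) \& (2) $\Rightarrow$ lone axis, I would show the topological graph $\Gamma$ and the map $f$ are then essentially unique: any other marked graph carrying a train track representative of a power of $\varphi$ is obtained from $\Gamma$ by a Whitehead-type blow-up of the principal vertex along a partition of its directions that is ``carried'' by $\mathcal{IW}(\varphi)$, and such a partition exists if and only if $\mathcal{IW}(\varphi)$ has a cut vertex, which (2) forbids. Hence $\bigcup_n TT(\varphi^n)$ lies in the single proper line $L_f$; taking closures in $CV_N$ gives $\mathcal{A}_\varphi\subseteq L_f$, and since $\mathcal{A}_\varphi$ is $\varphi$-invariant while $\varphi$ acts on $L_f\cong\mathbb{R}$ by translation, $\mathcal{A}_\varphi=L_f\cong\mathbb{R}$. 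For the converse I would produce, whenever (1) or (2) fails, a train track point from which two distinct fold lines of $\mathcal{A}_\varphi$ emanate: if (1) fails, then either disconnectedness of $\mathcal{IW}(\varphi)$ (several principal structures foldable independently) or a sub-maximal vertex count (an extra blow-up graph, as above) yields a local branching; if (1) holds but (2) fails, the cut vertex gives an explicit blow-up producing a second graph that carries a train track representative of some $\varphi^n$ and lies on a fold line branching off $L_f$. In every case $\mathcal{A}_\varphi$ is not homeomorphic to $\mathbb{R}$.

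The hard part is the local model underlying the first paragraph: rigorously matching ``fold lines of $\mathcal{A}_\varphi$ through $x$'' with local train track data and then with the combinatorics of $\mathcal{IW}(\varphi)$. The Handel--Mosher criterion for a fold line to limit to $T_\pm$ is delicate, and converting ``extra folding direction'' or ``cut vertex of $\mathcal{IW}(\varphi)$'' into an honest marked graph $\Gamma'$ inside $\mathcal{A}_\varphi$ with a genuine train track representative of a power of $\varphi$ requires a careful Whitehead-graph and blow-up analysis. I expect the bookkeeping around periodic Nielsen paths, subdivisions, and valence-two vertices to absorb most of the technical effort.
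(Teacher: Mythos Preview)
This theorem is not proved in the paper: it is quoted verbatim from Mosher--Pfaff \cite{mp16lone-axes} and used as a black box. So there is no ``paper's own proof'' to compare your proposal to; any genuine proof would have to be checked against the original source.

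That said, your outline contains a real error that would derail the argument. You assert that the index equality $i(\varphi)=\tfrac32-N$ is equivalent to $\mathcal{IW}(\varphi)$ being \emph{connected} with $2N-1$ vertices, and that this forces $\Gamma$ to have a \emph{single} principal vertex. This is false, and the paper's own running example is a counterexample: in \Cref{ex:lone-axis} the ideal Whitehead graph is a disjoint union of three triangles, there are three principal vertices, and yet $i(\varphi)=3(1-\tfrac32)=-\tfrac32=\tfrac32-3$, so condition (1) holds. More generally, from the formula $i(\varphi)=k-\tfrac12\sum_i m_i$ one sees that many component configurations realize the extremal index; connectedness is not forced. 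Since your forward implication (``(1) and (2) $\Rightarrow$ lone axis'') is built on the single-principal-vertex picture and on ruling out blow-ups at that one vertex, the argument as written does not go through.

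The actual Mosher--Pfaff proof does not pass through connectedness of $\mathcal{IW}(\varphi)$; it analyzes the local structure of the axis bundle near a stable train track via the full collection of local/stable Whitehead graphs and the possible ``splittings'' at each principal vertex, and shows that extra dimensions in $\mathcal{A}_\varphi$ arise precisely when either the index is non-extremal or some component admits a cut-vertex splitting. Your converse direction (producing branching when (1) or (2) fails) is closer in spirit to what they do, but the forward direction needs to be reworked to handle multiple components and multiple principal vertices simultaneously.
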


\begin{example} \label{ex:lone-axis}
We show that the automorphism $\varphi$ of our running example has a lone axis. Using the package \cite{coulbois-train-track}, we verified that the train track representative $f:\Gamma\to \Gamma$ of \Cref{fig:train-track-representative} has no periodic Nielsen paths. We proceed by analyzing \Cref{fig:train-track-representative}. The turn $\{\bar e, a\}$ at the black vertex is crossed by the edge $c$ under $f$. We can see that the direction $\bar c$ is not in the image of $Df$, and the remaining nine directions are permuted transitively. Similarly, the turns consisting of periodic directions are permuted transitively. Thus, from the single taken turn $\{\bar e, a\}$, we have that every turn consisting of periodic directions is taken by a power of $f$. This implies that the local stable Whitehead graphs are triangles and the ideal Whitehead graph is the disjoint union of three triangles. Then
\[
    i(\varphi) = \sum_{j=1}^3 1- \frac{3}{2} = -\frac{3}{2} = \frac{3}{2} - 3
\]
and no component of $\mathcal{IW}(\Gamma)$ has a cut vertex, so $\varphi$ has a lone axis by \Cref{thm:mp16-lone-axis-criteria}.
\end{example}

\subsection{Folded Mapping Torus}

A {\em free-by-cyclic group} 
\[
    G = G_\varphi = F_N\rtimes_\varphi \ZZ = \langle w, r | r\inv w r = \varphi(w) \ \forall w\in F_N \rangle
\]
is a semidirect product of $F_N$ with $\ZZ$. We sometimes refer to such a group as the {\em mapping torus group} of $\varphi$. We now describe an associated $K(G,1)$ space which has proven fruitful in studying the splittings of free-by-cyclic groups.

\begin{figure}
    \centering
    \begin{tikzpicture}
        \draw[decoration={markings, mark=at position 0.5 with {\arrow{>}}},postaction={decorate}] (0,0) -- (4,0);
            \draw[decoration={markings, mark=at position 0.5 with {\arrow{>}}},postaction={decorate}] (0,4) -- (1,4);
            \draw[decoration={markings, mark=at position 0.5 with {\arrow{>}}},postaction={decorate}] (1,4) -- (2,4);
            \draw[decoration={markings, mark=at position 0.5 with {\arrow{>}}},postaction={decorate}] (2,4) -- (3,4);
            \draw[decoration={markings, mark=at position 0.5 with {\arrow{>}}},postaction={decorate}] (3,4) -- (4,4);
        \draw[red] (4,0) -- (4,4);
        \draw (0,0) -- (0,4);
        \draw (4,1) -- (1,4);
        \draw[blue] (3,2) -- (3,4);
        \draw[red] (2,3) -- (2,4);
        \node at (2,0)[below]{a};
        \node at (0.5,4)[above]{c};
        \node at (1.5,4)[above]{d};
        \node at (2.5,4)[above]{a};
        \node at (3.5,4)[above]{e};

        \node at (0,0)[below]{$w$};
        \node at (4,0)[below]{$u$};
        \node at (0,4)[above]{$v_0$};
        \node at (1,4)[below]{$v_0$};
        \node at (2,3)[below]{$v_3$};
        \node at (3,2)[below]{$v_2$};
        \node at (4,1)[right]{$v_1$};
        \node at (2,4)[above]{$w$};
        \node at (3,4)[above]{$u$};
        \node at (4,4)[above]{$w$};

        \draw[decoration={markings, mark=at position 0.5 with {\arrow{>}}},postaction={decorate}] (5,0) -- (6,0);
        \draw[decoration={markings, mark=at position 0.5 with {\arrow{>}}},postaction={decorate}] (5,4) -- (6,4);
        \draw[red] (5,0) -- (5,4);
        \draw[blue] (6,0) -- (6,4);
        \draw (6,0) -- (5,1);
        \draw (6,2) -- (5,3);
        \node at (5.5,0)[below]{b};
        \node at (5.5,4)[above]{a};
        \draw[decoration={markings, mark=at position 0.5 with {\arrow{>}}},postaction={decorate}] (7, 0) -- (9,0);
        \draw[decoration={markings, mark=at position 0.5 with {\arrow{>}}},postaction={decorate}] (7, 4) -- (8,4);
        \draw[decoration={markings, mark=at position 0.5 with {\arrow{>}}},postaction={decorate}] (8, 4) -- (9,4);
        \draw[blue] (7,0) -- (7,4);
        \draw[blue] (9,0) -- (9,4);
        \draw (9,0) -- (7,2);
        \draw (9,2) -- (8,3);
        \draw[red] (8,1) -- (8,4);
        \node at (8,0)[below]{c};
        \node at (7.5,4)[above]{e};
        \node at (8.5,4)[above]{a};

        \draw[decoration={markings, mark=at position 0.5 with {\arrow{>}}},postaction={decorate}] (10,0) -- (11,0);
        \draw[decoration={markings, mark=at position 0.5 with {\arrow{>}}},postaction={decorate}] (10,4) -- (11,4);
        \draw[blue] (10,0) -- (10,4);
        \draw[] (11,0) -- (11,4);
        \node at (10.5,0)[below]{d};
        \node at (10.5,4)[above]{b};

        \draw[decoration={markings, mark=at position 0.5 with {\arrow{>}}},postaction={decorate}] (12,0) -- (13,0);
        \draw[red] (12,0) -- (12,4);
        \draw[] (13,0) -- (13,4);
        \draw (12,3) -- (13,4);
        \draw[decoration={markings, mark=at position 0.5 with {\arrow{<}}},postaction={decorate}] (12,4) -- (13,4);
        \node at (12.5,0)[below]{e};
        \node at (12.5,4)[above]{d};

        \draw[violet,thick] (0,0.5) -- (2,0);
        \draw[violet,thick] (0,3) -- (4,0.5);
        \draw[violet,thick] (2.5,4) -- (3,3) -- (3.5,4);

        \draw[violet,thick] (5,0.5) -- (6,0.5);

        \draw[violet,thick] (5.5,4) -- (6,3);

        \draw[violet,thick] (7,0.5) -- (9,0.5);
        \draw[violet,thick] (7,3) -- (7.5,4);
        \draw[violet,thick] (8.5,4) -- (9,3);

        \draw[violet,thick] (10,0.5) -- (11,0.5);
        \draw[violet,thick] (10,3) -- (11,3);

        \draw[violet,thick] (12.5,0) -- (13,0.5);
        \draw[violet,thick] (12,0.5) -- (13,3);
    \end{tikzpicture}

    \caption{The folded mapping torus $X$ of $f$ with a cross section dual to the class $2r^*+b^*$. The horizontal edges form the base graph $\Gamma$ of \Cref{fig:train-track-representative}, and are not 1-cells of the trapezoidal cell structure of $X$. The vertical 1-cells are colored corresponding to the vertices of \Cref{fig:train-track-representative}. The top and bottom are identified by the edge labels. Skew 1-cells of the same height are identified, as are the 2-cells above them.}
    \label{fig:folded-mapping-torus}
\end{figure}
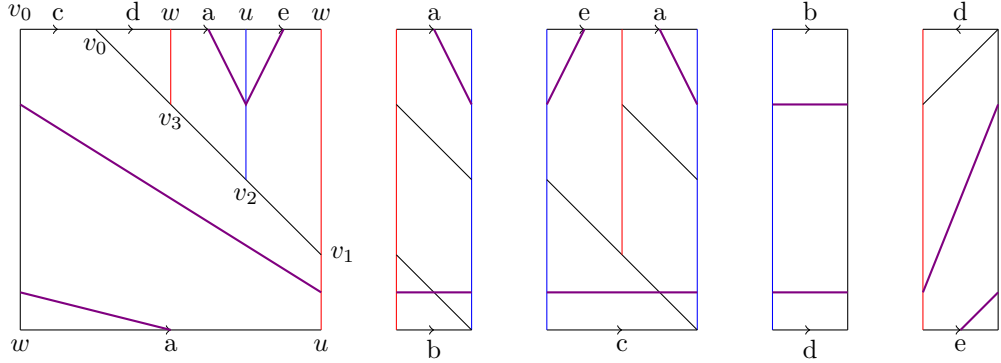

Given an expanding irreducible train track map $f: \Gamma\to \Gamma$ representing $\varphi$, one can define the mapping torus of $f$, the space $\Gamma \times I/(x,1)\sim (f(x),0)$. In \cite{DKL1}, given a Stallings fold decomposition of $f$, they construct the {\em folded mapping torus} of $f$, denoted $X = X_f$ (typically the dependence on the folding sequence is suppressed). The space $X$ is a quotient of the mapping torus of $f$ where at each height $t$, points of $\Gamma$ are identified according to a continuous version of the Stallings fold decomposition after time $t$. The space $X$ has the structure of a 2-complex with a {\em trapezoidal cell structure}. The {\em vertical 1-cells} of the structure lie above vertices of the graph $\Gamma$, sometimes after subdivisions. There is a {\em skew} (or {\em diagonal}) {\em 1-cell} for each fold in the Stallings fold sequence. The {\em degree} of a 1-cell $\bar{v}$ is the minimal number of components of $U\setminus\bar{v}$, where $U$ is an arbitrarily small neighborhood of a point $x\in \bar{v}$. Each skew 1-cell has degree three, is the bottom edge of a unique 2-cell, and is in a top edge of two 2-cells (or possibly a single 2-cell which folds onto itself). \Cref{fig:folded-mapping-torus,fig:folded-mapping-torus-cells} depict the folded mapping torus for the automorphism of \Cref{ex:automorphism}.

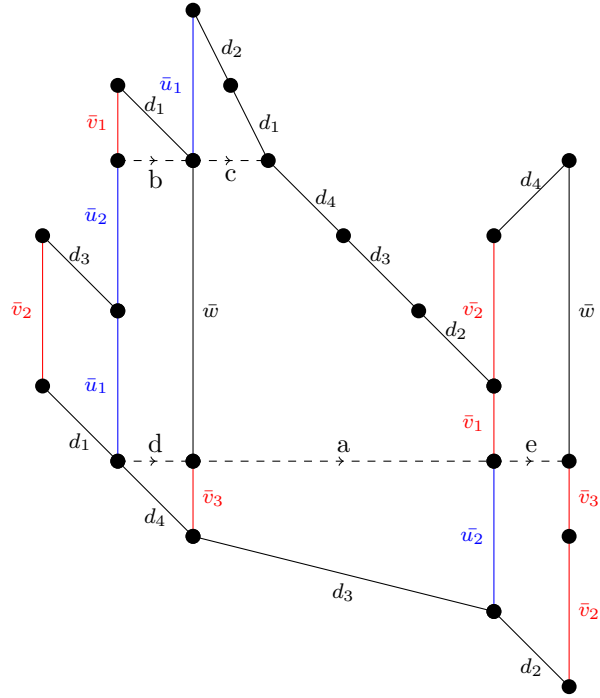
\begin{figure}
    \centering
    \begin{tikzpicture}

    \draw[red] (-1,1) -- (-1,3) node[midway,left]{\footnotesize $\bar{v}_2$};
    \draw (-1,1) -- (0,0) node[midway, below]{\footnotesize $d_1$};
    \draw (-1,3) -- (0,2) node[midway, above]{\footnotesize $d_3$};
        \node[circle,fill,inner sep=2pt] at (-1,1){};
        \node[circle,fill,inner sep=2pt] at (-1,3){};
        \node[circle,fill,inner sep=2pt] at (0,0){};
        \node[circle,fill,inner sep=2pt] at (0,2){};

    \draw[blue] (0,0) -- (0,4) node[pos=1/4,left]{\footnotesize $\bar{u}_1$} node[pos=13/16,left]{\footnotesize $\bar{u}_2$};
    \draw[red] (0,4) -- (0,5) node[midway,left]{\footnotesize $\bar{v}_1$};
    \draw[red] (1,-1) -- (1,0) node[midway,right]{\footnotesize $\bar{v}_3$};
    \draw (1,0) -- (1,4) node[midway,right]{\footnotesize $\bar{w}$};
    \draw (0,5) -- (1,4) node[midway, above]{\footnotesize $d_1$};
    \draw (0,0) -- (1,-1) node[midway, below]{\footnotesize $d_4$};
        \node[circle,fill,inner sep=2pt] at (0,0){};
        \node[circle,fill,inner sep=2pt] at (0,2){};
        \node[circle,fill,inner sep=2pt] at (0,4){};
        \node[circle,fill,inner sep=2pt] at (0,5){};
        \node[circle,fill,inner sep=2pt] at (1,-1){};
        \node[circle,fill,inner sep=2pt] at (1,0){};
        \node[circle,fill,inner sep=2pt] at (1,4){};

    \draw[blue] (1,4) -- (1,6) node[midway,left]{\footnotesize $\bar{u}_1$};
    \draw[blue] (5,-2) -- (5,0) node[midway,left]{\footnotesize $\bar{u_2}$};
    \draw (1,6) -- (2,4) node[pos=0.25, right]{\footnotesize $d_2$} node[pos=0.75,right]{\footnotesize $d_1$};
    \draw(2,4) -- (5,1) node[pos=1/6, right]{\footnotesize $d_4$} node[midway,above]{\footnotesize $d_3$} node[pos=5/6, above]{\footnotesize $d_2$};
    \draw (1,-1) -- (5,-2) node[midway,below]{\footnotesize $d_3$};
        \node[circle,fill,inner sep=2pt] at (1,-1){};
        \node[circle,fill,inner sep=2pt] at (1,0){};
        \node[circle,fill,inner sep=2pt] at (1,4){};
        \node[circle,fill,inner sep=2pt] at (1,6){};
        \node[circle,fill,inner sep=2pt] at (1.5,5){};
        \node[circle,fill,inner sep=2pt] at (2,4){};
        \node[circle,fill,inner sep=2pt] at (3,3){};
        \node[circle,fill,inner sep=2pt] at (4,2){};
        \node[circle,fill,inner sep=2pt] at (5,1){};
        \node[circle,fill,inner sep=2pt] at (5,0){};
        \node[circle,fill,inner sep=2pt] at (5,-2){};
    
    \draw[red] (5,0) -- (5,3) node[pos=1/6,left]{\footnotesize $\bar{v}_1$} node[pos=2/3,left]{\footnotesize $\bar{v_2}$};
    \draw[red] (6,-3) -- (6,0) node[pos=1/3,right]{\footnotesize $\bar{v}_2$} node[pos=5/6,right]{\footnotesize $\bar{v}_3$};
    \draw (6,0) -- (6,4) node[midway,right]{\footnotesize $\bar{w}$};
    \draw (5,3) -- (6,4) node[midway, above]{\footnotesize $d_4$};
    \draw(5,-2) -- (6,-3) node[midway, below]{\footnotesize $d_2$};
        \node[circle,fill,inner sep=2pt] at (5,-2){};
        \node[circle,fill,inner sep=2pt] at (5,0){};
        \node[circle,fill,inner sep=2pt] at (5,1){};
        \node[circle,fill,inner sep=2pt] at (5,3){};
        \node[circle,fill,inner sep=2pt] at (6,4){};
        \node[circle,fill,inner sep=2pt] at (6,0){};
        \node[circle,fill,inner sep=2pt] at (6,-1){};
        \node[circle,fill,inner sep=2pt] at (6,-3){};

    \draw[dashed,decoration={markings, mark=at position 0.5 with {\arrow{>}}}, postaction={decorate}] (0,0) -- (1,0) node[midway,above]{d};
    \draw[dashed,decoration={markings, mark=at position 0.5 with {\arrow{>}}}, postaction={decorate}] (0,4) -- (1,4) node[midway,below]{b};
    \draw[dashed,decoration={markings, mark=at position 0.5 with {\arrow{>}}},postaction={decorate}] (1,0) -- (5,0) node[midway,above]{a};
    \draw[dashed,decoration={markings, mark=at position 0.5 with {\arrow{>}}},postaction={decorate}] (5,0) -- (6,0) node[midway,above]{e};
    \draw[dashed,decoration={markings, mark=at position 0.5 with {\arrow{>}}},postaction={decorate}] (1,4) -- (2,4) node[midway,below]{c};
    \end{tikzpicture}
    
    \caption{The trapezoidal 2-cells of the folded mapping torus depicted in \Cref{fig:folded-mapping-torus}. The subscripts for $d_1, \dots, d_4$ on the skew 1-cells correspond to the height of the skew 1-cell in \Cref{fig:folded-mapping-torus}. Strictly speaking, both the `trapezoidal cell structure' of \cite{DKL1} and the modified version given here are further subdivisions of the cell structure shown above; we have also added additional vertices at the height of the graph $\Gamma$ for the sake of consistent colorings.}
    \label{fig:folded-mapping-torus-cells}
\end{figure}

The folded mapping torus $X$ is equipped with an upward semiflow $\psi$, which descends from a suspension semiflow on the mapping torus of $f$. To analogize the cones arising from Thurston's norm for 3-manifolds, an open cone $\mathcal{P}\subseteq H^1(X;\RR) = H^1(G; \RR)$ is defined in \cite{DKL1} as the set of cohomology classes which have cocycle represenatives that are positive on each 1-cell of $X$ (with the trapezoidal cell structure). Each primitive integral cohomology class $\alpha\in \mathcal{P}$ is dual to a cross section $\Theta_\alpha$ of the flow $\psi$ (in the sense of \cite{DKL1}), and the first return map $f_\alpha$ of $\psi$ is a homotopy equivalence of $\Theta_\alpha$. Thus each such $\alpha$ induces a splitting of $G_\varphi$ as $\pi_1(\Theta_\alpha)\rtimes \ZZ$ and is associated to an outer automorphism $[\varphi_\alpha: \pi_1(\Theta_\alpha)\to \pi_1(\Theta_\alpha)]$, which we call the {\em monodromy}.

A larger cone $C_X\subseteq H^1(X; \RR)$ investigated in \cite{DKL2} is also associated to a folding mapping torus $X$. Here, primitive integral classes $\alpha\in C_X$ are still dual to cross sections $\Theta_\alpha$, but the first return maps are not necessarily homotopy equivalences. These cross sections as first return maps induce a splitting of $G$ as an ascending HNN-extension of an injective endormorphism, and it is shown that the cone $C_X$ is equal to a component of $\RR_+\cdot \Sigma(G)$, where $\Sigma(G)$ is the BNS invariant introduced in \cite{BNS87}. We will consider a component of the cone over $\Sigma_s(G) = \Sigma(G) \cap -\Sigma(G)$ which we will call $C_s$: $C_s$ is the component of $C_X \cap (\RR_+\cdot \Sigma_s(G))$ that contains $\mathcal{P}$. Each integral class of $C_s$ induces a splitting of $G$ as a mapping torus group of an (outer) automorphism $\varphi_\alpha$, and so primitive integral classes in $\Sigma_s(G)$ have associated axis bundles.

\subsubsection{A refined cell structure for $X$.}
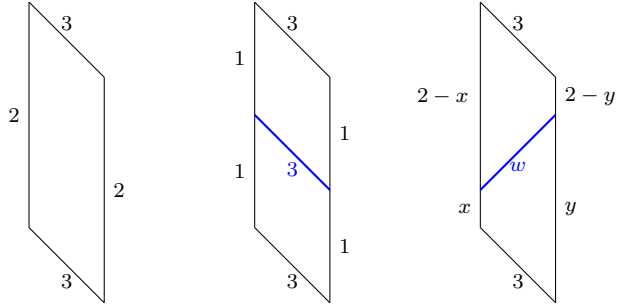
\begin{figure}
    \centering
    \begin{tikzpicture}
        \draw (-3,0) -- (-3,3) node[midway,left]{\footnotesize 2};
        \draw (-3,3) -- (-2,2) node[midway,above]{\footnotesize 3};
        \draw (-3,0) -- (-2,-1) node[midway,below]{\footnotesize 3};
        \draw (-2,-1) -- (-2,2) node[midway,right]{\footnotesize 2};
    
        \draw (0,0) -- (0,3) node[pos=1/4,left]{\footnotesize 1} node[pos=3/4,left]{\footnotesize 1};
        \draw (0,3) -- (1,2) node[midway,above]{\footnotesize 3};
        \draw (0,0) -- (1,-1) node[midway,below]{\footnotesize 3};
        \draw (1,-1) -- (1,2) node[pos=1/4,right]{\footnotesize 1} node[pos=3/4,right]{\footnotesize 1};
        \draw[blue,thick] (0,1.5) -- (1,0.5) node[midway,below]{\footnotesize 3};

        \draw (3,0) -- (3,3) node[pos=1/12,left]{\footnotesize $x$};
        \node at (2.5,1.75){\footnotesize $2-x$};
        \draw (3,3) -- (4,2) node[midway,above]{\footnotesize 3};
        \draw (3,0) -- (4,-1) node[midway,below]{\footnotesize 3};
        \draw (4,-1) -- (4,2) node[pos=5/12,right]{\footnotesize $y$} node[pos=11/12,right]{\footnotesize $2-y$};
        \draw[blue,thick] (3,0.5) -- (4,1.5) node[midway,below]{\footnotesize $w$};
    \end{tikzpicture}
    \caption{Depiction of a trapezoid and two possible subdivisions. The leftmost trapezoid has labels according to a positive cocycle $z$. The center trapezoid has been subdivided with a blue `parallel' skew 1-cell, and the cocycle $z$ has a positive refinement to the new cell structure. On the right, the additional blue skew 1-cell is nonparallel to the existing skew 1-cells. The cocycle condition for the rightmost trapezoid is that $3 + x + w = y$, which cannot be satisfied for positive $x,y,w$ without making another 1-cell have negative weight: if $x,w>0$, then $y > 3$, but then the weight on the top right 1-cell is $2-y < -1 < 0$. Geometrically, adding the skew 1-cell in a nonparallel manner has the effect of excluding cross sections with edges traveling between the two skew 1-cells.}
    \label{fig:subdividing-cocycles}
\end{figure}

    In the construction of the folded mapping torus \cite[Section 4.4]{DKL1}, the authors add some skew 1-cells in addition to those from the folding sequence. They do this to ensure that they can build a trapezoidal cell structure without horizontal 1-cells. For the general graph maps they consider, this is a necessary addition. For instance if an edge $e\in E(\Gamma)$ is a loop which is fixed by the graph map $f$ and not in the image of any other edge, it is necessary to add one of these additional skew 1-cells to obtain a trapezoidal cell structure on the folded mapping torus. But in the context of an expanding irreducible train track map, we will show this is never necessary in \Cref{lem:only-degree-3-skews}.

    The authors of \cite{DKL1} assert that the choice of additional skew 1-cell ``does not matter." However, we note that the choice of added 1-cell can change the cone $\mathcal{P}$: if $T$ is a trapezoid whose top and bottom sides are parallel, then $T$ can be subdivided by adding an additional skew 1-cell into two trapezoids which are not parallel. In this case, you cannot necessarily refine a positive cocycle to be positive on the subdivision, while you can if the subdivision was with a parallel skew 1-cell, see \Cref{fig:subdividing-cocycles}. One can also check that subdividing a nonparallel trapezoid will preserve $\mathcal{P}$ regardless of the direction.

\begin{lemma} \label{lem:auxillary-graph-no-cycles}
    Let $f: \Gamma\to \Gamma$ be a train track map and $G(f)$ be the combinatorial graph whose vertices are the set $E(\Gamma)$ with a single directed edge from $e_i$ to $e_j$ if $f(e_i)$ runs over $e_j$ exactly once and there is no $k\neq i$ such that $f(e_k)$ runs over $e_j$. If $f$ is expanding irreducible, $G(f)$ has no cycles.
\end{lemma}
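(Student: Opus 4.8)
The plan is to convert the combinatorial condition defining the edges of $G(f)$ into a linear-algebraic statement about the transition matrix $A=(a_{ij})$ of $f$, and then to rule out directed cycles by pairing that statement against the Perron--Frobenius eigenvector of $A$. The crux is that a single edge $e_i\to e_j$ of $G(f)$ constrains an entire column of $A$.

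Concretely, I would first record the reformulation: there is a directed edge $e_i\to e_j$ in $G(f)$ if and only if $a_{ij}=1$ and $a_{kj}=0$ for every $k\neq i$, i.e.\ if and only if the $j$-th column of $A$ is the $i$-th standard basis vector $\mathbf{e}_i$. Since $f$ is expanding irreducible, $A$ is a nonnegative irreducible integer matrix whose largest eigenvalue is the stretch factor $\lambda$, with $\lambda>1$ because $f$ is expanding; Perron--Frobenius then provides a strictly positive (left) eigenvector $x=(x_i)$, $xA=\lambda x$. Reading the $j$-th coordinate of $xA=\lambda x$ for an edge $e_i\to e_j$ and using that column $j$ of $A$ is $\mathbf{e}_i$ gives $\lambda x_j=\sum_k x_k a_{kj}=x_i$, so every edge $e_i\to e_j$ of $G(f)$ satisfies $x_i=\lambda x_j$. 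Finally, if $G(f)$ contained a directed cycle $e_{i_0}\to e_{i_1}\to\cdots\to e_{i_{m-1}}\to e_{i_0}$ with $m\geq1$, chaining this identity around the cycle would give $x_{i_0}=\lambda x_{i_1}=\cdots=\lambda^m x_{i_0}$, and since $x_{i_0}>0$ this forces $\lambda^m=1$, contradicting $\lambda>1$. Hence $G(f)$ is acyclic.

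I do not anticipate a genuine obstacle, but two points deserve care. First, the orientation conventions must be kept straight: the edge condition of $G(f)$ constrains a column of $A$, so it pairs naturally with a left eigenvector $xA=\lambda x$, and using a right eigenvector of $A$ instead would not produce the relation $x_i=\lambda x_j$. Second, it is irreducibility (not merely expansion) that guarantees the Perron eigenvector is strictly positive, and strict positivity of $x_{i_0}$ is precisely what upgrades $x_{i_0}=\lambda^m x_{i_0}$ to $\lambda^m=1$ rather than $0=0$. I would also note that the argument applies verbatim to closed directed walks, so no care is needed in distinguishing ``cycle'' from ``closed walk,'' and that the case $m=1$ (a loop at a single vertex) is already covered, where column $i$ of $A$ being $\mathbf{e}_i$ directly forces $\lambda=1$.
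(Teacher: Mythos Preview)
Your proof is correct, but it takes a different route from the paper's. The paper argues directly and combinatorially: given a cycle $e_1\to e_2\to\cdots\to e_n\to e_1$ in $G(f)$, any edge $e$ outside the cycle can never map over any $e_i$ under any power of $f$ (since each $e_i$ is hit only by $e_{i-1}$), violating irreducibility; and if the cycle exhausts $E(\Gamma)$, then $f$ permutes the edges and is a homeomorphism, violating expansion. Your argument instead encodes the edge condition as ``column $j$ of $A$ equals $\mathbf{e}_i$'' and pairs against the Perron--Frobenius left eigenvector to get $x_i=\lambda x_j$ along each $G(f)$-edge, so a cycle forces $\lambda^m=1$.

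Your approach is slicker and avoids the case split; it also transparently handles closed walks and loops uniformly, as you note. The paper's argument is more hands-on and makes explicit which hypothesis fails in each case (irreducibility when the cycle is proper, expansion when it is not), which is perhaps more informative but less economical. Both rely on the same underlying facts about $A$, just accessed at different levels: you use the eigenvector, the paper uses the defining properties of irreducibility and expansion directly.
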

\begin{proof}
    Suppose $G(f)$ has a cycle, say $e_1\mapsto e_2 \mapsto \dots \mapsto e_n\mapsto e_1$. If there exists an edge $e\in E(\Gamma)$ not in the cycle, then $f(e)$ does not run over any of the $e_i$, because each $e_i$ has an incoming edge not from $e$. Thus no power $f^k(e)$ runs over any of the $e_i$. In this case, $f$ is not irreducible.

    Suppose then that the cycle contains all edges of $\Gamma$, i.e. for each $e_i$ it maps over $e_{i+1}$ and no other edges. Then $f$ is a finite order graph map (in fact a homeomorphism) and so is not expanding.
\end{proof}

\begin{figure}
    \centering
    \begin{tikzpicture}
        \draw[very thick] (0,-1) -- (0,3);
        \draw[very thick] (1,-1.2) -- (1,9);
            \draw (2,-1.4) -- (2,0.5);
            \draw[very thick] (2,0.5) -- (2,9);
        \draw[very thick] (3,-1.6) -- (3,6);
            \draw (4,-1.8) -- (4,0.5);
            \draw[very thick] (4,0.5) -- (4,6);
        \draw[very thick] (5,-2) -- (5,6);
        \draw[very thick] (1,2) -- (0,3);
        \draw[very thick] (0,-1) -- (5,-2);
        \draw[dashed] (1,0) -- (3,1);
        \draw[dashed] (3,0) -- (5,1);
        
        \draw[very thick] (3,3) -- (2,4);
        \draw[dashed] (1,3) -- (2,4);

        \draw[very thick] (5,3) -- (4,4);
        \draw[very thick] (3,4) -- (4,5);
            \draw (3.2,4.2) -- (3.2,6);
            \draw[very thick] (3.4,4.4) -- (3.4,6);
            \draw (3.6, 4.6) -- (3.6,6);
            \draw[very thick] (3.8,4.8) -- (3.8,6);

        \draw[very thick] (2,8) -- (1,7);
            \draw (1.2,7.2) -- (1.2,9);
            \draw[very thick] (1.4,7.4) -- (1.4,9);
            \draw (1.6,7.6) -- (1.6,9);
            \draw[very thick] (1.8,7.8) -- (1.8,9);

        \draw[thick,blue,decoration={markings,mark=at position 0.5 with {\arrow{>}}},postaction=decorate] (0,0) -- (5,0) node[midway,below]{$a$};

        \draw[thick,blue,decoration={markings,mark=at position 0.5 with {\arrow{>}}},postaction=decorate] (0,3) -- (1,3) node[midway,above]{$e_1$};
        \draw[thick,blue,decoration={markings,mark=at position 0.5 with {\arrow{>}}},postaction=decorate] (1,3) -- (3,3) node[midway,above left]{$b$};
        \draw[thick,blue,decoration={markings,mark=at position 0.5 with {\arrow{>}}},postaction=decorate] (3,3) -- (5,3) node[midway,above left]{$c$};

        \draw[thick,blue,decoration={markings,mark=at position 0.5 with {\arrow{>}}},postaction=decorate] (1,6) -- (2,6) node[midway,above]{$d$};
        \draw[thick,blue,decoration={markings,mark=at position 0.5 with {\arrow{>}}},postaction=decorate] (2,6) -- (3,6) node[midway,above]{$e_2$};
        \draw[thick,blue,decoration={markings,mark=at position 0.5 with {\arrow{<}}},postaction=decorate] (3,6) -- (4,6) node[midway,above]{$a$};
        \draw[thick,blue,decoration={markings,mark=at position 0.5 with {\arrow{>}}},postaction=decorate] (4,6) -- (5,6) node[midway,above]{$e_2$};

         \draw[thick,blue,decoration={markings,mark=at position 0.5 with {\arrow{<}}},postaction=decorate] (1,9) -- (2,9) node[midway,above]{$a$};
    \end{tikzpicture}
    \caption{A piece of a folded mapping torus $X$ of a map $f: \Gamma\to\Gamma$. The image of the inclusion $\Gamma\subset G$ is shown in blue. The dashed skew 1-cells are the degree 2 skew 1-cells added in \cite{DKL1}, while the thick vertical and skew 1-cells are the other 1-cells of their cell structure. The thinner vertical segments are pieces of vertical 1-cells added so that we can remove the degree two skew 1-cells from the cell structure.}
    \label{fig:new-cell-structure}
\end{figure}

\begin{lemma} \label{lem:only-degree-3-skews}
    If $f: \Gamma\to \Gamma$ is an expanding irreducible train track map, then the folded mapping torus $X$ of $f$ has a trapezoidal cell structure without skew 1-cells of degree two.
\end{lemma}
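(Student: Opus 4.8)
The plan is to identify which $2$-cells in the construction of \cite[Section 4.4]{DKL1} force an extra skew $1$-cell, to organize those cells by means of the graph $G(f)$ of \Cref{lem:auxillary-graph-no-cycles}, and then to use the acyclicity of $G(f)$ to replace every extra skew $1$-cell by a vertical $1$-cell carried along the semiflow $\psi$.

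First I would recall why \cite{DKL1} introduce skew $1$-cells beyond those coming from the folding sequence. Over an edge $e\in E(\Gamma)$ the column in the folded mapping torus fails to break into honest trapezoids exactly when it is a ``flat'' column — $e$ is not subdivided in the Stallings subdivision $\Gamma_0$, no fold of the sequence meets the column, and no $1$-cell from a neighbouring column enters its interior — in which case \cite{DKL1} cuts the resulting parallelogram in two with a degree-two skew $1$-cell. One checks that a flat column over $e$ forces $f(e)$ to be a single edge $e'$ traversed exactly once among all of the $\{f(e_i)\}$, and by the image of $e$ alone, so that $e\to e'$ is the unique edge of $G(f)$ with terminal vertex $e'$, and that the column over $e'$ is again flat unless the chain stops. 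Thus the flat columns are threaded by $\psi$ into chains which are precisely the maximal directed paths $e^{(1)}\to\cdots\to e^{(n)}$ of $G(f)$ along which every column is flat, each $e^{(i)}$ mapping homeomorphically onto $e^{(i+1)}$ under $f$, and both ends of such a path abut non-flat columns that carry a genuine non-horizontal $1$-cell (a skew $1$-cell coming from a fold, or a vertical $1$-cell over an interior subdivision vertex of $\Gamma_0$).

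Now I would invoke \Cref{lem:auxillary-graph-no-cycles}: since $f$ is expanding irreducible, $G(f)$ has no cycles, so every such chain is finite. I would then choose, for each maximal chain of flat columns, a flow arc of $\psi$ threading the chain from the bottom of the first column to the top of the last — this is possible because the chain is finite and each link is a homeomorphism of edge interiors — and add it to the cell structure as a new vertical $1$-cell. Each flat column it crosses is thereby cut into two cells with no horizontal side, i.e.\ honest trapezoids. Deleting the degree-two skew $1$-cells of \cite{DKL1} and inserting these vertical $1$-cells gives a trapezoidal cell structure on $X$ with no horizontal $1$-cells and with every skew $1$-cell of degree three, as illustrated in \Cref{fig:new-cell-structure}, where the thin vertical segments are exactly the added arcs.

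The main obstacle I expect is the verification that the added arcs produce a \emph{legitimate} trapezoidal cell structure: one must check that they glue consistently where several chains meet or where a chain abuts the non-flat part of $X$, that no remaining $2$-cell is forced to acquire a horizontal side or a skew side of degree two, that the arcs can be chosen to respect the $1$-skeleton conventions of \cite{DKL1} (after a preliminary subdivision of $\Gamma$ if needed), and that everything closes up across the height-one identification of the mapping torus. The essential point — that no new degree-two skew $1$-cell is ever required — is carried entirely by the acyclicity in \Cref{lem:auxillary-graph-no-cycles}; the remainder is a careful but routine analysis of the trapezoids adjacent to each chain.
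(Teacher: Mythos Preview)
Your core strategy—use the acyclicity of $G(f)$ from \Cref{lem:auxillary-graph-no-cycles} to replace degree-two skew $1$-cells by vertical $1$-cells carried along the semiflow—is exactly the paper's, and your reference to \Cref{fig:new-cell-structure} is apt. However, there is a genuine gap in your characterization of where the degree-two skew $1$-cells of \cite{DKL1} appear. You assert they arise precisely over an edge $e\in E(\Gamma)$ that is \emph{not subdivided} in $\Gamma_0$ (so that $f(e)$ is a single edge), but \Cref{fig:new-cell-structure} itself shows the contrary: the dashed degree-two skews there sit over \emph{subdivided} pieces of an edge $a$ with $f(a)=e_1bc$. The correct unit of analysis is a subdivided edge $e_i\in E(\Gamma_0)$: the strip above $e_i$ lacks a degree-three skew exactly when $e_i$ carries a label $a$ unique among all labels in $\Gamma_0$, and this is what produces an edge $e\to a$ in $G(f)$ (with $e$ the parent edge of $e_i$). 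Your chains $e^{(1)}\to\cdots\to e^{(n)}$ with each $f(e^{(i)})=e^{(i+1)}$ a homeomorphism are therefore only a special subclass of the problematic strips, and your threading procedure as stated leaves the others untreated.

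The paper resolves this by separating the two ends of each would-be trapezoid. For the \emph{bottom} it extends every existing vertical $1$-cell backward along the flow until it meets a degree-three skew $1$-cell; this is justified by irreducibility alone (any degree-three skew eventually flows forward across every edge), not by $G(f)$. For the \emph{top} it iteratively subdivides: whenever a subdivided edge $e_i$ carries a unique label $a$, one passes to the subdivided pieces of $a$ and repeats; each such step follows an edge of $G(f)$, and acyclicity guarantees termination. Your single ``threading'' step tries to accomplish both simultaneously, and because of the mischaracterization above it does not actually reach all the strips that need subdivision. The obstacle you flag in your final paragraph—checking that the added arcs glue consistently into a legitimate trapezoidal structure—is not merely routine bookkeeping; it is precisely where the paper's two-step separation (backward extension via irreducibility, forward subdivision via $G(f)$) does the real work.
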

\begin{proof}
    Let $\hat{X}$ be the folded mapping torus of $f$ with the trapezoidal cell structure of \cite{DKL1}. In this cell structure the vertical 1-cells are segments of flow lines that flow into a vertex of $\Gamma$, and there are skew 1-cells both along fold identifications and the additional degree two skew 1-cells we would like to remove.
    
    We claim that we can modify this cell structure as follows: remove the degree two skew 1-cells and extend each vertical 1-cell $\bar{v}$ backwards until it intersects a degree three skew 1-cell. Note that a `backwards flow' is defined everywhere except the degree three skew 1-cells, so we need to check that each vertical 1-cell does backwards flow into a degree three skew 1-cell. Let $e$ be an edge of $\Gamma\subset X$ such that for some point $x\in e$, $x$ flows into $\bar{v}$ (i.e. $\psi_t(x) \in \bar{v}$ for some $t > 0$). Let $s$ be a degree three skew 1-cell of $\hat{X}$, then $s$ forward flows into some edge $e_0\in E(\Gamma)$. Since $f$ is irreducible, there is a $k > 0$ such that $f^k(e_0)$ crosses over $e$. In particular, there is a point $y\in s$ that flows into $x$, thus into $\bar{v}$. Hence the vertical 1-cells do actually backwards flow into degree three skew 1-cells.

    We have shown that after we remove a degree two skew 1-cell, the bottom of each trapezoid is well-defined. We now address what could occur at the top. Since each skew 1-cell is potentially only a segment of the top of a trapezoid, we may not have a trapezoidal cell structure after removing the degree two skew 1-cells. We now claim that for each edge $e$ of $\Gamma$, it can be subdivided so that each subdivided edge flows into a degree three skew 1-cell of $\hat X$.

    Let $e\in E(\Gamma)$ and $\Gamma_0$ be the Stallings subdivision of $\Gamma$. We denote by $e_1,\dots, e_n\in E(\Gamma_0)$ the subdivided edges of $e$. For each $i$, we consider whether the strip above $e_i$ contains a skew 1-cell. The edge $e_i$ is labeled by an edge $a\in E(\Gamma)$. If another subdivided edge $b$ has the same label $a$, then $e_i$ and $b$ are folded together in the Stallings fold decomposition and there is a skew 1-cell in the strip above $e_i$. On the other hand, if $e_i$ is the unique subdivided edge with the label $a$, then there is a directed edge from $e$ to $a$ in the graph $G(f)$ of \Cref{lem:auxillary-graph-no-cycles}.
    If so, we subdivide $a$ by adding vertices at each point of $f\inv(V(\Gamma))$, which induces a subdivision of $e_i$ adding vertices at the points of $f^{-2}(V(\Gamma))$. Now, we investigate whether there are skew 1-cells in the strips above subdivided edges of $a$.
    As above, the only failure to contain a skew 1-cell in a strip is if there is a directed edge of $G(f)$ originating at $a$. The graph $G(f)$ is finite and has no cycles by \Cref{lem:auxillary-graph-no-cycles}, so this process of consecutively subdividing edges until each subdivided edge of $e$ flows into a skew 1-cell will terminate. We note that the subdivision given here actually agrees with the subdivision arising from the backwards flowing of the vertical 1-cells. See \Cref{fig:new-cell-structure}.
\end{proof}

\section{Discreteness of Lone Axis Automorphisms} \label{section:proof-main-theorem}

In this section we prove our main theorem, that lone axis automorphisms form a discrete set of $\Sigma(G)$. We will first prove some facts about the axis bundle.

\subsection{High dimensional axis bundles}
Consider an expanding irreducible train track map $f: \Gamma\to \Gamma$ where $\Gamma$ is equipped with the eigenmetric induced by $f$. For any illegal turn $\tau = \{e_i,e_j\}$ and small $\eps > 0$, we can define a folding path $\gamma_\tau: [0,\eps]\to CV_N$ as follows: $\gamma_\tau(t)$ is obtained by first identifying (folding) the initial length $t$ segments of the edges $e_i, e_j$, then rescaling all edge lengths so that their sum is one. The following lemma follows directly from the fold lines characterization of the axis bundle given in \cite{hm11axis_bundle}. The equivalence given by Handel--Mosher of that definition and the train track based definition we use goes through a more technical third definition of the axis bundle (`weak' train tracks), so we will give a proof here.

\begin{lemma} \label{lem:folding-stay-axis-bundle}
    Let $f: \Gamma\to \Gamma$ be an expanding irreducible train track map representing $\varphi$ and suppose $\Gamma$ is equipped with the eigenmetric induced by $f$. Then for any illegal turn $\tau$, $\gamma_\tau([0,\eps])\subseteq \mathcal{A}_\varphi$ for small $\eps > 0$.
\end{lemma}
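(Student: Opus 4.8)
The plan is to invoke Handel and Mosher's second description of the axis bundle recalled above: $\mathcal{A}_\varphi$ is the union of all fold lines in $CV_N$ that converge to $T_+$ in forward time and to $T_-$ in backward time. So it suffices to exhibit a single bi-infinite fold line that contains $\gamma_\tau([0,\eps])$ and limits correctly. The equivalence of this description with the train track definition of $\mathcal{A}_\varphi$ is the technical point we are avoiding, but the description itself is available from \cite{hm11axis_bundle}.

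The key observation is that $\gamma_\tau$ is the initial segment of the continuous Stallings fold decomposition of a suitable power of $f$. Write $\tau = \{e_i, e_j\}$ and let $n\geq 1$ be minimal with $Df^n(e_i) = Df^n(e_j)$; such an $n$ exists because $\tau$ is illegal. Then $f^n : \Gamma\to\Gamma$ is an expanding irreducible train track representative of $\varphi^n$ which stretches legal paths by $\lambda^n$ in the eigenmetric, and in its Stallings subdivision $\Gamma_0$ (vertices added at $(f^n)^{-1}(V(\Gamma))$) the initial sub-edges of $e_i$ and $e_j$ carry the same edge label and have the same length. Hence folding these two sub-edges is a legitimate first fold in a Stallings fold decomposition of $f^n$, and performing this fold by a small amount $t$ and rescaling to unit volume is exactly $\gamma_\tau(t)$. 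Completing this Stallings fold decomposition of $f^n$ gives a fold line from $\Gamma$ to $\tfrac{1}{\lambda^n}f^n(\Gamma)$, which is again $\Gamma$ as a point of $CV_N$ since the terminal homeomorphism realizes $\varphi^n$ on $\pi_1$. Iterating forward and backward produces a $\varphi^n$-periodic bi-infinite fold line $L$ through $\Gamma$ whose first forward excursion parametrizes $\gamma_\tau|_{[0,\eps]}$.

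Finally, $L$ passes through $\Gamma, \Gamma\cdot\varphi^{\mp n}, \Gamma\cdot\varphi^{\mp 2n}, \dots$ with Lipschitz displacement $\log\lambda^n$ between consecutive terms, and $\Gamma\cdot\varphi^{-m}\to T_+$ while $\Gamma\cdot\varphi^{m}\to T_-$ in $\overline{CV_N}$; the analysis of periodic fold lines in \cite{hm11axis_bundle} then shows the forward ray of $L$ converges to $T_+(\varphi^n)=T_+$ and the backward ray to $T_-(\varphi^n)=T_-$. Hence $L\subseteq\mathcal{A}_{\varphi^n}$, and $\mathcal{A}_{\varphi^n}=\mathcal{A}_\varphi$ follows from the definition $\mathcal{A}_\psi=\overline{\bigcup_m TT(\psi^m)}$ together with $TT(\psi^m)\subseteq TT(\psi^{nm})$. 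In particular $\gamma_\tau([0,\eps])\subseteq L\subseteq\mathcal{A}_\varphi$.

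The step I expect to need the most care is recognizing $\gamma_\tau$ as the start of the Stallings fold decomposition of $f^n$ rather than of $f$: for a general illegal turn one does not have $Df(e_i)=Df(e_j)$, so passing to the power $n=n(\tau)$ is essential, and one must then also check that the fold amounts $\gamma_\tau$ prescribes match those of the $f^n$-decomposition for small $t$ — they do, since both simply identify equal-length initial segments. One must also take $\eps$ small enough that these segments stay within the first sub-edges of $e_i, e_j$ in $\Gamma_0$, which only shrinks $\eps$.
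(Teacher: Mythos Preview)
Your argument is correct, but it takes a different route from the paper's. You invoke the fold line characterization of $\mathcal{A}_\varphi$ from \cite{hm11axis_bundle} and embed $\gamma_\tau([0,\eps])$ into a $\varphi^n$-periodic fold line whose ends converge to $T_\pm$. The paper, by contrast, works directly with the train track definition $\mathcal{A}_\varphi = \overline{\bigcup_m TT(\varphi^m)}$: it shows that for a dense set of parameters $t\in[0,\eps]$ (those landing in $\bigcup_k f^{-nk}(V(\Gamma))$) the folded graph $\gamma_\tau(t)$ itself admits a train track representative of $\varphi^n$, hence lies in $TT(\varphi^n)$, and then closes up by continuity. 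The paper even remarks, just before the lemma, that your approach is the ``direct'' one but that it leans on the Handel--Mosher equivalence of definitions, which passes through the more technical weak train track description; this is precisely why they supply the alternative proof. What your approach buys is brevity once one accepts the fold line description and the convergence of periodic fold lines to $T_\pm$ as black boxes; what the paper's approach buys is a self-contained argument using only the definition of $\mathcal{A}_\varphi$ stated in the paper, at the cost of the density-and-closure step. Both arguments share the same first move: passing to the power $n$ so that the illegal turn becomes degenerate under $Df^n$ and the fold at $\tau$ becomes a legitimate first Stallings fold of $f^n$.
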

\begin{proof}
    Let $\tau = \{e_i, e_j\}$ be an illegal turn at the vertex $v$ and $n$ be the minimal positive integer such that $\{Df^n(e_{i}),Df^n(e_j) \}$ is the degenerate turn $\{a,a\}$. In a Stallings fold decomposition of $f^n$, we can take our first fold to be between subdivided edges $e_{i_1},e_{j_1}$ of $e_i,e_j$, respectively, because they both carry the edge label $a$. The length of $e_{i_1}$ and $e_{j_1}$ is $\ell(a)/\lambda^n$, where $\lambda$ is the stretch factor of $f$, so we can fold lengths up to $\ell(a)/\lambda^n$. We choose $0< \eps < \ell(a)/\lambda^n$ and claim that $\gamma_\tau([0,\eps])\subseteq \overline{TT(\varphi^n)}$.

    Consider the set $\mathcal{V} = \bigcup_{k=1}^\infty f^{-nk}(V(\Gamma))$. Since $f$ is expanding, $\mathcal{V}$ is dense in $\Gamma$. The edge $e_i$ is the isometric image of a map $\iota: [0,\ell_\Gamma(e_i)]\to \Gamma$, with $\iota(0) = v\in V(\Gamma)$. The set $\iota\inv(\mathcal{V})$ is dense in $[0,\eps]$. Suppose $t\in \iota\inv (\mathcal{V})\cap [0,\eps]$, then $\iota(t)\in f^{-nk}(V(\Gamma))$ for some $k$. We can subdivide the edges of $\Gamma$ at every point in $f^{-nk}(V(\Gamma))$, then perform the fold of the subdivided $e_i$ and $e_j$ of length $t$. The train track map $f^n$ of $\Gamma$ descends to a train track map of the folded graph $\gamma_\tau(t)$, so $\gamma_\tau(t)\in TT(\varphi^n)$.
    
    For $t\notin \iota\inv(\mathcal{V})$, we note that $\gamma_\tau$ is a continuous function (it continuously changes the edge lengths). So density of $\iota\inv(\mathcal{V})$ in $[0,\eps]$ implies that $TT(\varphi^n)$ intersects $\gamma_\tau([0,\eps])$ in a dense set, i.e. $\gamma_\tau([0,\eps])\subseteq \overline{TT(\varphi^n)}\subseteq \mathcal{A}_\varphi$.
\end{proof}

It will be useful for us to be able to perform simultaneous folds while remaining in the axis bundle. If $\tau_1, \dots, \tau_n$ are $n$ illegal turns at distinct vertices $v_1, \dots, v_n$ and $\eps > 0$ is sufficiently small, we define a map $\Delta: [0,\eps]^n\to CV_N$ as follows: $\Delta(t_1, \dots, t_n)$ is obtained by (simultaneously) folding at each turn $\tau_i$ by a length $t_i$, then rescaling the metric to be of volume one.

\begin{corollary} \label{cor:delta-map-folding-stay-axis-bundle}
    Let $f: \Gamma\to\Gamma$ be an expanding irreducible train track map representing $\varphi$ and suppose $\Gamma$ is equipped with the eigenmetric induced by $f$. Then for any collection of illegal turns $\tau_1, \dots, \tau_m$ at distinct vertices $v_1,\dots, v_m$, $\Delta([0,\eps]^m)\subseteq \mathcal{A}_\varphi$ for small $\eps > 0$.
\end{corollary}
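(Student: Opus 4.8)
The plan is to upgrade the single-fold argument of Lemma \ref{lem:folding-stay-axis-bundle} to simultaneous folds by passing to a common power of $f$ in which all of the chosen illegal turns become degenerate. Concretely, for each $i$ let $n_i$ be the minimal positive integer with $\{Df^{n_i}(e), Df^{n_i}(e')\}$ degenerate, where $\tau_i = \{e,e'\}$, and set $n = \mathrm{lcm}(n_1,\dots,n_m)$ (or just $n = n_1\cdots n_m$). Then $f^n$ is still an expanding irreducible train track map representing $\varphi^n$, and each $\tau_i$ is an illegal turn of $f^n$ whose two directions are identified by a single power of $Df^n$. The point is that in a Stallings fold decomposition of $f^n$ we are now free to perform, as the first several folds, a fold at each $\tau_i$ of length up to $\ell(a_i)/\lambda^{n}$ (with $a_i$ the common image label and $\lambda$ the stretch factor), since these folds occur at distinct vertices $v_1,\dots,v_m$ and hence do not interfere with one another. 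Choose $\eps>0$ smaller than all the $\ell(a_i)/\lambda^{n}$.

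Next I would run the density argument from Lemma \ref{lem:folding-stay-axis-bundle} coordinate by coordinate. Let $\mathcal{V} = \bigcup_{k\geq 1} f^{-nk}(V(\Gamma))$, which is dense in $\Gamma$ since $f$ is expanding. For a point $(t_1,\dots,t_m)$ whose every coordinate $t_i$ corresponds, via the arclength parametrization of the appropriate edge at $v_i$, to a point of $\mathcal{V}$, one can choose a single $k$ large enough that all $m$ of the relevant points lie in $f^{-nk}(V(\Gamma))$; subdividing $\Gamma$ at $f^{-nk}(V(\Gamma))$ and then performing the $m$ folds of lengths $t_1,\dots,t_m$ yields a graph on which $f^n$ descends to a train track map representing $\varphi^n$, so $\Delta(t_1,\dots,t_m)\in TT(\varphi^n)$. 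The set of such points is exactly a product of $m$ dense subsets of $[0,\eps]$, hence dense in $[0,\eps]^m$. Since $\Delta$ is continuous (it varies the edge lengths continuously and then rescales), $\Delta([0,\eps]^m) \subseteq \overline{TT(\varphi^n)} \subseteq \mathcal{A}_\varphi$.

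The main thing to be careful about is the claim that folds at distinct vertices are genuinely independent within one Stallings fold sequence of $f^n$ — i.e. that performing a small fold at $\tau_i$ does not alter the edge labels or the legality structure near $v_j$ for $j\neq i$, so that all $m$ folds can be taken as the opening moves of a single valid fold decomposition and the residual map is still a legal train track map. This is where I would spend the most care: it amounts to observing that each fold at $\tau_i$ only identifies initial segments of two edges emanating from $v_i$, so for $\eps$ small the folded edges do not reach any other $v_j$, and the induced map on the quotient graph still sends the (possibly subdivided) edges to legal edge paths because we have only quotiented along identifications that $Df^n$ already forces. A secondary, more bookkeeping-level point is ensuring that a single $k$ works simultaneously for all $m$ coordinates in the dense set — but this is immediate, since a finite union of sets of the form $f^{-nk}(V(\Gamma))$ is contained in $f^{-nK}(V(\Gamma))$ for $K$ the maximum. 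With those observations in place the corollary follows from Lemma \ref{lem:folding-stay-axis-bundle} essentially verbatim, replacing the interval $[0,\eps]$ by the cube $[0,\eps]^m$.
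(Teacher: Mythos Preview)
Your proposal is correct and follows essentially the same approach as the paper: pass to a common power $f^n$ making all the $\tau_i$ degenerate, choose $\eps$ below the shortest initial-segment length, show $\Delta(t_1,\dots,t_m)\in TT(\varphi^n)$ on the dense set coming from $\mathcal{V}=\bigcup_k f^{-nk}(V(\Gamma))$, and close up by continuity. The only cosmetic difference is that the paper takes $n$ to be the least integer making all turns degenerate (i.e.\ $\max_i n_i$, since once a turn is degenerate it stays so), whereas you take an $\mathrm{lcm}$ or product; your $n$ is larger than necessary but this is harmless.
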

\begin{proof}
    We write $\tau_i = \{e_{i_1},e_{i_2}\}$ and let $n$ be the least integer such that the turn $\{Df^n(e_{i_1}),Df^n(e_{i_2})\}$ is degenerate for all $i$, calling this degenerate turn $\{a_i,a_i\}$. We choose $0 < \eps < \min_{i=\{1,\dots,n\}} (\ell(a_i)/\lambda^n)$. Again we let $\mathcal{V} = \bigcup_{k=1}^\infty f^{-nk}(V(\Gamma))$ and say $\iota_i: [0,\ell_\Gamma(e_{i_1})]\to\Gamma$ is the isometric embedding onto $e_{i_1}$. Consider $\Delta(t_1, \dots, t_m)$. If $t_i\in \iota_i\inv (\mathcal{V})$ for each $i$, then we can consecutively apply \Cref{lem:folding-stay-axis-bundle} on each illegal turn $\tau_i$. After each fold, we remain in $TT(\varphi^n)$, so that eventually we obtain that $\Delta(t_1,\dots, t_m)\in TT(\varphi^n)$. A density and continuity argument similar to that of Lemma 3.1 yields that $\Delta(t_1, \dots, t_m)\in \mathcal{A}_\varphi$ for any $(t_1, \dots, t_m)$.
\end{proof}

We now work towards proving conditions under which an automorphism $\varphi$ has many axes. The following lemma is implied by the proof of Lemma 4.3 of \cite{mp16lone-axes}.

\begin{lemma}[\cite{mp16lone-axes}, Lemma 4.3] \label{lem:mp16-lone-axis-unique-illegal-turn}
    Let $\varphi: F_N\to F_N$ be a nongeometric automorphism. If $\varphi$ has a train track representative $f: \Gamma\to\Gamma$ with at least two illegal turns, $\mathcal{A}_\varphi$ is not a lone axis.
\end{lemma}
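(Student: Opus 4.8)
The plan is to use \Cref{cor:delta-map-folding-stay-axis-bundle} to embed a positive-dimensional cube into $\mathcal{A}_\varphi$, which immediately rules out $\mathcal{A}_\varphi \cong \RR$. Suppose $f \colon \Gamma \to \Gamma$ is a train track representative of $\varphi$ with two distinct illegal turns $\tau_1, \tau_2$. First I would reduce to the case that these two turns occur at distinct vertices: a train track map representing a fully irreducible automorphism is expanding irreducible, and by subdividing $\Gamma$ appropriately (or by passing to a power of $f$, whose train track structure has the same axis bundle since $\mathcal{A}_{\varphi^n} = \mathcal{A}_\varphi$), one can arrange that the illegal turns are based at distinct vertices. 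Actually the cleaner route is to argue directly: if both illegal turns are at the same vertex $v$, then $v$ carries at least three distinct directions participating in illegal turns, and one can still fold along a two-parameter family — but separating the vertices via subdivision is the simplest bookkeeping, since subdivision does not change the homeomorphism type of outer space or the axis bundle.

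Once the turns $\tau_1$ and $\tau_2$ are at distinct vertices $v_1 \neq v_2$, \Cref{cor:delta-map-folding-stay-axis-bundle} gives a map $\Delta \colon [0,\eps]^2 \to \mathcal{A}_\varphi$ for small $\eps > 0$. The key remaining point is that $\Delta$ is injective (equivalently, that the folds along $\tau_1$ and $\tau_2$ move the point of $CV_N$ in genuinely independent directions). This holds because folding along $\tau_1$ by a length $t_1$ changes the metric graph in a way supported near $v_1$ — it shortens the two edges of $\tau_1$ and lengthens their common image — while folding along $\tau_2$ affects edges near $v_2$; for small $\eps$ these modifications are disjoint or at worst interact linearly, so the resulting length functions $\ell_{(t_1,t_2)}$ on the (subdivided) edge set are pairwise distinct for distinct $(t_1,t_2)$, even after the volume-one rescaling. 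Hence $\Delta$ is a continuous injection of $[0,\eps]^2$ into $\mathcal{A}_\varphi$, so $\mathcal{A}_\varphi$ contains a subset homeomorphic to a $2$-disk and therefore cannot be homeomorphic to $\RR$ (e.g. by invariance of domain, or simply because $\RR$ contains no embedded $2$-disk). Thus $\varphi$ does not have a lone axis.

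The main obstacle I anticipate is verifying the injectivity of $\Delta$ carefully, i.e.\ confirming that the two folding directions are not accidentally parallel in $CV_N$ after projectivizing to volume one. This requires checking that the two folds act on disjoint (or independent) coordinates of the edge-length vector; the disjoint-vertex reduction is precisely what makes this transparent, since then the edges whose lengths are altered by the $\tau_1$-fold are distinct from those altered by the $\tau_2$-fold (after a common subdivision making all relevant fold endpoints into vertices), and the rescaling is a single positive scalar that cannot collapse a $2$-parameter family to a $1$-parameter one. A secondary, more cosmetic point is to make sure the constant $\eps$ can be chosen uniformly for both turns simultaneously, which is already handled in the statement of \Cref{cor:delta-map-folding-stay-axis-bundle}.
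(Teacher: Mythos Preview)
Your approach differs from the paper's and has a genuine gap. The paper does not try to embed a $2$-cube via $\Delta$; instead it observes that with two illegal turns one obtains two \emph{distinct Stallings fold decompositions} of a power $f^n$ (fold $\tau_1$ first versus fold $\tau_2$ first), and hence two distinct $\varphi$-periodic fold lines inside $\mathcal{A}_\varphi$. A lone axis contains exactly one such line, so this suffices. No injectivity of $\Delta$, no separation of vertices, is needed.

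Your argument, by contrast, leans on two unjustified steps. First, the reduction to distinct vertices: subdividing $\Gamma$ inserts new vertices in the interiors of edges but does not move the directions forming $\tau_1,\tau_2$ away from their common vertex, and passing to a power of $f$ leaves $\Gamma$ and its turns unchanged. So neither maneuver separates two illegal turns that sit at the same vertex. Second, and more seriously, the injectivity of $\Delta\colon[0,\eps]^2\to CV_N$ is not automatic even when $v_1\neq v_2$. Your claim that the two folds alter disjoint sets of edge lengths is false in general: an edge joining $v_1$ to $v_2$ is affected by both folds. Concretely, on a theta graph with edges $a,b,c$ and illegal turns $\{a,b\}$ at $v_1$ and $\{\bar a,\bar b\}$ at $v_2$, simultaneous folding by $(t_1,t_2)$ produces edge lengths depending only on $t_1+t_2$, so $\Delta$ collapses to a $1$-parameter family. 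This is exactly why \Cref{lem:high-local-dimension} needs hypothesis~(2) (an edge not incident to any $v_i$) to get dimension $n$, and without it yields only dimension $\geq n-2$; for $n=2$ that is vacuous. So the $2$-cube route cannot close without further hypotheses, whereas the paper's fold-line argument sidesteps the issue entirely.
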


\begin{proof}
    Write $\tau_i = \{e_{i_1},e_{i_2}\}$ and let $n$ be the minimal positive integer such that the turns $\{Df^n (e_{i_1}), Df^n (e_{i_2})\}$ are degenerate. Then $f^n$ has two distinct Stallings fold decompositions, one for each choice of $\tau_i$ to fold first. These two choices give rise to two distinct periodic fold lines in $\mathcal{A}_\varphi$, so $\varphi$ has multiple axes.
\end{proof}

Note that the statement of \Cref{lem:mp16-lone-axis-unique-illegal-turn} in \cite{mp16lone-axes} assumes that $\varphi$ is ageometric rather than nongeometric, because this is a small part of a larger proof. On the other hand, as already noted, it is shown in \cite{hm11axis_bundle} that parageometric automorphisms do not ever have a lone axis. We now prove an analogous lemma, that if a train-track map $f: \Gamma\to \Gamma$ has many illegal turns, it is far from being lone axis in the sense that $\mathcal{A}_\varphi$ has large dimension. Note that as stated, it doesn't recover \Cref{lem:mp16-lone-axis-unique-illegal-turn}.

\begin{lemma} \label{lem:high-local-dimension}
    Let $f: \Gamma \to \Gamma$ be an expanding irreducible train track map representing $\varphi: F_N\to F_N$ satisfying the following properties:
    \begin{enumerate}
        \item $\Gamma$ has $n$ illegal turns $\tau_1, \dots, \tau_n$, occuring at distinct valence three vertices $v_1, \dots, v_n$.
        \item There is some edge $e_0$ of $\Gamma$ not incident to any $v_i$.
    \end{enumerate}
    Then there is an embedded copy of $\mathbb{R}^n$ in any neighborhood of $\Gamma$ which is contained in the axis bundle $\mathcal{A}_\varphi$. In particular, $\mathcal{A}_\varphi$ has local dimension of at least $n$ at $\Gamma$.
    \par Additionally, if we remove assumption (2) above, then $\mathcal{A}_\varphi$ has local dimension of at least $n-2$ at $\Gamma$.
\end{lemma}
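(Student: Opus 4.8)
The plan is to produce an embedded $\mathbb{R}^n$ inside $\mathcal{A}_\varphi$ near $\Gamma$ by folding simultaneously at all $n$ illegal turns. Concretely, I would take the map $\Delta\colon [0,\eps]^n \to CV_N$ from the discussion preceding \Cref{cor:delta-map-folding-stay-axis-bundle}, which folds at each $\tau_i$ by a length $t_i$ and then rescales to volume one. By \Cref{cor:delta-map-folding-stay-axis-bundle} its image lies in $\mathcal{A}_\varphi$, and by shrinking $\eps$ we may assume the image lies in any prescribed neighborhood of $\Gamma$ (continuity of $\Delta$ with $\Delta(0,\dots,0)=\Gamma$). So the entire content is to show that $\Delta$ is an embedding, or at least that its image has topological dimension $n$ at $\Gamma$; then local dimension $\ge n$ follows, and the claim about an embedded $\mathbb{R}^n$ in every neighborhood follows because $CV_N$ is locally the interior of a simplex in which $[0,\eps]^n$ embeds.

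For injectivity I would argue as follows. Each vertex $v_i$ has valence three, so folding at $\tau_i=\{e_{i_1},e_{i_2}\}$ by length $t_i$ creates a new skew edge of length $t_i$ emanating from $v_i$ and shortens the two edges $e_{i_1},e_{i_2}$ by $t_i$ before rescaling; since the $v_i$ are distinct and hypothesis (1) forces the folds to be independent, the combinatorial type of the folded graph $\Delta(t_1,\dots,t_n)$ is constant for $(t_1,\dots,t_n)$ in the open cube $(0,\eps)^n$, say equal to some fixed marked graph $\Gamma'$ with a distinguished set of $n$ "new" edges $a_1,\dots,a_n$. The (unnormalized) length of $a_i$ is exactly $t_i$. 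Two parameters in $(0,\eps)^n$ giving the same point of $CV_N$ would give the same marked metric graph up to the volume-one rescaling, hence the same \emph{ratios} of edge lengths; since the lengths of the $n$ edges $a_i$ are precisely $t_1,\dots,t_n$ and the lengths of the remaining (non-skew) edges are determined and do not all vanish, the common rescaling factor is forced, so $(t_i)=(t_i')$. This also shows $\Delta$ restricted to $(0,\eps)^n$ is a homeomorphism onto its image — it is a continuous injection from an open cube into the fixed open simplex of $\Gamma'$ in $CV_N$, with continuous inverse given by reading off normalized edge lengths — giving the embedded $\mathbb{R}^n$.

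For the final sentence of the lemma, I would remove hypothesis (2) and show we still get local dimension $\ge n-2$. The role of $e_0$ above is only to guarantee that, after folding, at least one "old" edge length survives as a nonzero coordinate pinning down the rescaling factor — equivalently, to keep the folded graph from degenerating to a graph whose edges are \emph{only} the skew edges $a_i$ and the two shortened edges at each $v_i$. Without (2), the worst case is that the total length is carried entirely by the $2n$ edges incident to the $v_i$ together with the $n$ new skew edges, and a constraint from the volume-one normalization together with the identifications could collapse the naive $n$-dimensional family. I would handle this by instead fixing the values $t_{n-1}, t_n$ (say) and only varying $t_1,\dots,t_{n-2}$: the two folds at $v_{n-1},v_n$ then play the role of "reserving" enough edge-length elsewhere in the graph to again pin the rescaling, and the argument of the previous paragraph applied to the remaining $n-2$ coordinates gives an embedded $\mathbb{R}^{n-2}$. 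The main obstacle is exactly this bookkeeping: verifying that folds at distinct valence-three vertices really are independent in the strong sense that the combinatorial type is locally constant and that the $t_i$ appear as genuinely independent coordinates after rescaling — one must make sure no two of the shortening operations interact (e.g. the edges $e_{i_1},e_{i_2}$ are long enough to fold by $\eps$, which is where the minimal-power estimate $\eps < \min_i \ell(a_i)/\lambda^n$ from \Cref{cor:delta-map-folding-stay-axis-bundle} is used) and that the resulting point of $CV_N$ genuinely remembers all of $t_1,\dots,t_n$.
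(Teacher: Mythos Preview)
Your overall strategy—apply \Cref{cor:delta-map-folding-stay-axis-bundle} and then show $\Delta$ is an embedding—matches the paper's. The gap is in your injectivity argument. Folding at a \emph{valence-three} vertex does not create a new edge in the $CV_N$ representative: after identifying the initial length-$t_i$ segments of $e_{i_1},e_{i_2}$, the old vertex $v_i$ becomes valence two and is smoothed away, so the merged segment is absorbed into the third edge at $v_i$. The homeomorphism type of the marked graph is therefore unchanged (this is exactly why the paper can ``refer to edges of $\Delta(t_1,\dots,t_n)$ by the names of edges of $\Gamma$''). Your distinguished edges $a_1,\dots,a_n$ simply do not exist as edges of the point in $CV_N$, so your proposed inverse—reading off their normalized lengths—is not defined. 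What one must actually do is track how the existing edge lengths change: the paper writes $\ell_{\Delta(t)}(e) = \bigl(\ell_\Gamma(e) + \sum_i \delta_i(e) t_i\bigr)/\bigl(1-\sum_i t_i\bigr)$ with $\delta_i(e)\in\{-1,0,1\}$, uses hypothesis (2) to force $\sum_i t_i = \sum_i t_i'$, and then propagates $t_i=t_i'$ one vertex at a time using connectedness of $\Gamma$. This last step is genuine content—an edge may be incident to two of the $v_j$, so its length involves two of the $t_j$ and one cannot simply ``read off'' any single $t_i$.

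Your treatment of the $n-2$ case inherits this problem and adds another. The paper's reduction is to choose any edge $e_0$ and discard the (at most two) folds at its endpoints; then $e_0$ literally satisfies (2) for the remaining $n-2$ folds. Your proposal to fix $t_{n-1},t_n$ at some values and hope this ``reserves edge-length'' does not produce such an edge unless $v_{n-1},v_n$ happen to be adjacent, which you do not arrange.
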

\begin{proof}
    Consider the simultaneous folding map $\Delta: [0,\eps]^n\to CV_N$, where $\eps > 0$ is chosen small enough so that \Cref{cor:delta-map-folding-stay-axis-bundle} guarantees $\Delta([0,\eps]^n)\subseteq \mathcal{A}_\varphi$. Note that because each illegal turn is at a valence three vertex, performing a small fold at a turn doesn't change the homeomorphism type of the graph, so we may refer to edges of $\ell_{\Delta(t_1,\dots,t_n)}$ by the names of edges of $\Gamma$. For any edge $e\in\Gamma$ we have that
    \[
        \ell_{\Delta(t_1,\dots,t_n)}(e) = \frac{\ell_\Gamma(e) + \sum_{i=1}^n \delta_i(e) t_i}{1-\sum_{i=1}^n t_i}
    \]
    where
    \[
        \delta_i(e) = \begin{cases}
            0 & \text{e is not incident to $v_i$ or $e$ is a loop} \\
            1 & \text{$e$ is incident to $v_i$ but not part of the illegal turn $\tau_i$} \\
            -1 & \text{$e$ is part of the illegal turn $\tau_i$.}
        \end{cases}
    \]
    Now we show that $\Delta$ is injective. Suppose that $\Delta(t_1,\dots, t_n) = \Delta(t_1', \dots, t_n')$. For each edge $e\in E(\Gamma)$, we obtain the following by equating $\ell_{\Delta(t_1,\dots,t_n)}(e) = \ell_{\Delta(t_1',\dots,t_n')}(e)$:
    \[
        \frac{1}{1-\sum_{i=1}^n t_i} \left ( \ell_\Gamma(e) + \sum_{i=1}^n \delta_i(e) t_i \right ) = \frac{1}{1-\sum_{i=1}^n t_i'} \left ( \ell_\Gamma(e) + \sum_{i=1}^n \delta_i(e) t_i' \right ).
    \]
    By (2), there is some $e_0$ such that $\delta_i(e_0) = 0$ for all $i$, which implies that $\sum_{i=1}^n t_i = \sum_{i=1}^n t_i'$. Let $s_i = t_i - t_i'$, the system of equations becomes $\sum_{i=1}^n \delta_i(e) s_i = 0$ over all edges $e$ of $\Gamma$.

    Since $\Gamma$ is connected and by assumption (2), there is an edge $e$ between one of the $v_i$ and a vertex $w\neq v_j$. Without loss of generality assume $i=1$. Then the equation for $e$ has the form $\delta_1(e) s_1 = 0$, and $\delta_1(e) = \pm 1$. So $s_1 = 0$. Now we claim that if $s_i = 0$ and there is an edge $e_{ij}$ between the vertices $v_i$ and $v_j$, then $s_j = 0$. This again comes from considering the equation associated to $e_{ij}$: $\delta_i(e_{ij}) s_i + \delta_j (e_{ij}) s_j = 0$. Since $s_i = 0$ and $\delta_j(e_{ij}) \neq 0$ the claim follows. This shows that if there is an edge path between $v_j$ and $w$, then $s_j = 0$, and this is true because $\Gamma$ is connected. Thus $t_i = t_i'$ for all $i$ and $\Delta$ is injective. Now $\Delta$ is a closed map since its domain $[0,\eps]^n$ is compact, so $\Delta$ is an embedding.

    Removing assumption (2), we let $w$ and $w'$ be any two vertices of $\Gamma$ connected by an edge $e_0$ and apply the above argument without considering folds at $w$ or $w'$. That is, after removing the possible folds at $w$ or $w'$, we recover assumption (2) replacing $n$ with $n-2$.
\end{proof}

Pfaff and Tsang \cite{pfaff-tsang-cubist} have given the axis bundle a cell structure that makes $\mathcal{A}_\varphi$ into what they call a `cubist' complex. The embedding we describe in \Cref{lem:high-local-dimension} lies in a face of a `branched cube' in this complex, and in fact \Cref{lem:high-local-dimension} can be viewed as a consequence of \cite[Lemma 3.8]{pfaff-tsang-cubist}.

\subsection{Cross Sections With Many Illegal Turns}
We now begin to consider cross sections of the semiflow on the folded mapping torus.

Let $\varphi: F_N\to F_N$ be a nongeometric, fully irreducible automorphism and $G = F_N\rtimes_\varphi \ZZ$ the associated free-by-cyclic group. We fix some folding sequence of a train track representative $f: \Gamma\to \Gamma$ and use this to define a folded mapping torus $X$ of $f$ with upward semiflow $\psi$. Let $\mathcal{P}$ denote the positive cone of \cite{DKL1} associated to $X$.

Given an integral class $\alpha\in \mathcal{P}$ and a positive cocycle $z$ representing $\alpha$, we obtain a `fibration' map $\eta_z: X\to S^1$ à la \cite{DKL1}. Then, for an appropriate $y\in S^1$, the set $\Theta_z = \eta_z\inv(y)$ is a graph which is a cross section to the semiflow $\psi$, and $\Theta_z$ has a vertex structure so that the first return map of $\psi$ is an expanding irreducible train track map. The graph $\Theta_z$ is connected if and only if $\alpha$ is primitive integral \cite[Theorem A]{DKL1}. The number of intersections of $\Theta_z$ with a 1-cell $e$ is bounded below by $\lfloor z(e) \rfloor$, because $z(e)$ measures how much $e$ wraps around $S^1$ under $\eta_z$.

\begin{figure}
    \centering
\begin{tikzpicture}
    \draw (0,-2) -- (0,2);
    \draw (2,-2) -- (2,2);
    \draw (0,0) -- (2,-1);

    \draw (-1,-2) -- (0,0);
    \draw[densely dotted] (1.5,-2) -- (2,-1);

    \draw[violet,thick] (0,-1) -- (2,0);
    \draw[violet,thick] (0,0.2) -- (2,1.2);
    \node at (1,0.7) [draw, shape = circle, fill = black, minimum size = 0.1cm, inner sep = 0pt,violet] (){};
    \draw[violet,densely dotted,thick] (0,-0.5) -- (1,-0.5);
    \draw[violet, thick] (0,-0.5) -- (-0.25,-0.5);
\end{tikzpicture}
\caption{Each intersection of a section graph $\Theta$ (in purple) with a skew 1-cell of $X$ gives rise to an illegal turn in the first return map of $\psi$.}
\label{fig:flow-illegal-turn}
\end{figure}
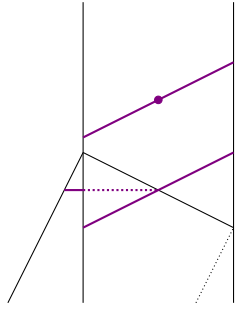

\begin{lemma}
    \label{two-skew-intersections-illegal}
    Let $\Theta_z$ be a section of $\psi$ corresponding to a positive cocycle $z$. For each intersection of $\Theta_z$ with a skew 1-cell $d$, there is an illegal turn in the first return map corresponding to the edges of $\Theta_z$ below the skew 1-cell (below meaning with respect to the semiflow $\psi$).
\end{lemma}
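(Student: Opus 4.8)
The plan is to analyze the local picture of a skew 1-cell $d$ in $X$ together with the upward semiflow $\psi$, and to trace what the cross section $\Theta_z$ looks like there after applying the first return map. Recall from the discussion of the trapezoidal cell structure that a skew 1-cell has degree three: it is the bottom edge of a unique 2-cell and appears in the top of two 2-cells (or of a single 2-cell folded onto itself). Concretely, just below $d$ the flow lines coming up from the interior of the graph $\Gamma$ (at the appropriate height) are about to be folded together by the continuous Stallings fold, and just above $d$ they have been identified. So near $d$ the cross section $\Theta_z$ looks like two edges (the two prongs entering the bottom 2-cells from below) that merge into one edge (the prong leaving the top 2-cell). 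This is exactly the picture in \Cref{fig:flow-illegal-turn}.

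First I would set up coordinates: pick a point $x$ on $d\cap\Theta_z$, and take a small neighborhood in $X$ compatible with the trapezoidal structure, so that $x$ sits on the bottom edge of the 2-cell $T$ whose bottom is $d$. Then I would describe the first return map $f_z$ of $\psi$ to $\Theta_z$ and identify which edges of $\Theta_z$ flow up through $d$. Because $d$ records a fold identification, the two edges of $\Theta_z$ lying immediately below $d$ (entering the two 2-cells that have $d$ as part of their top) are carried by $f_z$ onto edge paths whose initial segments agree — this is precisely the statement that the corresponding turn is illegal, since some power of the direction map collapses it, namely the power corresponding to flowing up past the fold. I would make this precise by noting that the semiflow above a skew 1-cell is not locally injective on the two prongs: they are glued, which is the continuous-time avatar of $Df(e_{i})=Df(e_{j})$ for the folded edges. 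This gives a turn $\{e_i,e_j\}$ at the vertex of $\Theta_z$ sitting just below $d$ with $\{Df_z^k(e_i),Df_z^k(e_j)\}$ degenerate for the appropriate $k$.

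Then I would observe that distinct intersection points of $\Theta_z$ with skew 1-cells give distinct illegal turns: each lies at a distinct point of $\Theta_z$ (distinct heights or distinct skew 1-cells in $X$), hence at distinct vertices of the section graph $\Theta_z$ after assigning it the vertex structure from \cite{DKL1} in which the first return map is a train track map. I would also remark that the vertices of $\Theta_z$ produced this way have valence three — two prongs below, one above — matching the hypotheses needed later in \Cref{lem:high-local-dimension}.

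The main obstacle I expect is being careful about the correspondence ``edges of $\Theta_z$ below $d$'' versus actual directions/turns in $\Theta_z$: one must check that the two prongs really are two \emph{distinct} directions at a single vertex of $\Theta_z$ (rather than, say, a single edge crossing $d$ transversally, or a degenerate turn), and that the illegal turn so produced is not already accounted for by a different intersection point. This amounts to understanding the local structure of the section graph at a skew 1-cell — precisely what \Cref{fig:flow-illegal-turn} is meant to convey — and invoking the fact that the first return map is a genuine expanding irreducible train track map, so that the turn detected as eventually-degenerate is by definition illegal. I would lean on the explicit description of cross sections and first return maps in \cite{DKL1} for the technical bookkeeping, and keep the argument essentially pictorial, citing \Cref{fig:flow-illegal-turn}.
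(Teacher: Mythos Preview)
Your proposal is correct and follows essentially the same approach as the paper: the skew 1-cell records a fold, so the semiflow identifies the two directions below it, making the turn illegal under the first return map. The paper's argument is slightly cleaner in one respect: rather than invoking a power $Df_z^k$, it observes that the first return map factors through $\psi_\eps$ for small $\eps>0$, so $Df_z$ itself already collapses the turn (i.e.\ $k=1$ suffices). Your additional remarks about distinct intersections yielding distinct valence-three vertices are not part of the paper's proof of this lemma, though they are implicitly used downstream.
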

\begin{proof}
    Let $x$ denote a point of intersection of $\Theta_z$ and $d$.
    For any small $\eps > 0$, the semiflow $\psi_\eps$ at time $\eps$ will identify points on edges below the skew 1-cell $d$, so the corresponding directions of $\Theta_z$ map to the same direction of $\psi_\eps(\Theta_z)$.
    The result follows because the first return map factors through $\psi_\eps$. See \Cref{fig:flow-illegal-turn}.
\end{proof}
\begin{theorem} \label{thm:projectively-discrete}
    Let $\varphi$ be a nongeometric fully irreducible automorphism of $F_N$. Let $G = \langle w, r | w\in F_N, r\inv w r = \varphi(w) \rangle$ be the mapping torus group of $\varphi$; denote by $r^*$ the class in $H^1(G;\RR)$ with associated monodromy $\varphi$.
    
    Given $k > 0$, there is an open cone $C \subseteq C_s \subseteq \mathbb{R}_+\cdot \Sigma_s(G)\subseteq H^1(G;\RR)$ containing $r^*$ such that for any primitive integral class $\beta\in C\setminus (\RR_+\cdot \{r^*\})$, the dimension of the axis bundle $\mathcal{A}_{\varphi_\beta}$corresponding to $\beta$ is at least $k$. In other words, automorphisms with any given bound on the dimension of their axis bundles are discrete in $\Sigma_s(G)$.
\end{theorem}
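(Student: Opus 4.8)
The plan is to pass to a high power of $\varphi$ and produce, for each primitive integral class $\beta$ sufficiently close to $r^*$ in the projectivization, a train track representative of $\varphi_\beta$ that has many illegal turns occurring at distinct valence three vertices, so that \Cref{lem:high-local-dimension} forces $\dim \mathcal{A}_{\varphi_\beta}$ to be large. The mechanism for counting illegal turns is \Cref{two-skew-intersections-illegal}: each intersection of a cross section $\Theta_z$ with a skew $1$-cell of $X$ produces an illegal turn in the first return map $f_\beta$. So the first task is to arrange that, for $\beta$ near $r^*$, the dual cross section meets at least $k + 2$ distinct skew $1$-cells, and to control the combinatorics (valences, distinctness of vertices, existence of a disjoint edge) enough to apply \Cref{lem:high-local-dimension}.

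First I would replace $\varphi$ by $\varphi^m$ for $m$ large; this changes neither $G$, nor the cone $\mathcal{P}$ (hence $C_s$), nor any axis bundle, but it multiplies the relevant train track map's combinatorial complexity — in particular the Stallings fold decomposition of $f^m$ has many folds, hence $X$ built from $f^m$ has many skew $1$-cells along a single vertical period. The number of skew $1$-cells in the folded mapping torus of $f^m$ grows with $m$ (roughly like the number of folds, which is unbounded since $f$ is expanding), so we may assume $X$ has at least $N' \gg k$ skew $1$-cells. Next, using the refined cell structure of \Cref{lem:only-degree-3-skews}, every skew $1$-cell has degree three; this is what will let the illegal turns land at valence three vertices of $\Theta_z$. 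Now the class $r^*$ itself is dual to a cross section (a fiber copy of $\Gamma$), and is in the interior of $\mathcal{P}$; I would take a cocycle $z_0$ representing a large multiple $n r^*$ and note $z_0$ is large on every $1$-cell, so $\Theta_{z_0}$ already meets every skew $1$-cell many times. Because the condition ``$z$ is a positive cocycle with $z(d) \geq 1$ on each of a fixed finite set of skew $1$-cells $d$'' is open in $H^1(G;\RR)$, there is an open cone $C$ about $r^*$ such that every integral class $\beta \in C$ has a cocycle representative $z_\beta$ positive on all $1$-cells and with $z_\beta(d)$ bounded below on at least $k+2$ chosen skew $1$-cells — giving at least $k+2$ intersections of $\Theta_{z_\beta}$ with distinct skew $1$-cells, hence (by \Cref{two-skew-intersections-illegal}) at least $k+2$ illegal turns in $f_\beta$, and then $\dim \mathcal{A}_{\varphi_\beta} \geq k$ by the ``remove assumption (2)'' clause of \Cref{lem:high-local-dimension}.

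I expect the main obstacle to be the hypotheses of \Cref{lem:high-local-dimension}: that the $k+2$ illegal turns occur at \emph{distinct} vertices of $\Theta_\beta$ and that these vertices have valence exactly three. For valence: since we use the refined cell structure of \Cref{lem:only-degree-3-skews} all skew $1$-cells have degree three, and the first return map's illegal turn coming from a skew $1$-cell is a turn between exactly the (two) edges of $\Theta_\beta$ flowing up into that skew cell, at a vertex of local degree three — this should be genuinely valence three provided $\Theta_\beta$ is taken generic (avoiding the finitely many vertical $1$-cells, which is automatic for a cocycle level set at a generic value). For distinctness: two intersection points of $\Theta_\beta$ with two \emph{different} skew $1$-cells give turns at different vertices of $\Theta_\beta$ because the vertices of $\Theta_\beta$ lying on skew cells are distinguished by which skew cell (equivalently which fold of the semiflow) they sit under; I would make this precise by tracking the flow time to the next skew cell. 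A secondary obstacle is ensuring $C \subseteq C_s$ rather than merely $C \subseteq H^1(G;\RR)$, so that $\varphi_\beta$ is an honest automorphism with a well-defined axis bundle — but this is immediate since $\mathcal{P} \subseteq C_s$ is open and $r^* \in \mathcal{P}$, so shrinking $C$ to lie inside $\mathcal{P}$ suffices and only strengthens the conclusion.
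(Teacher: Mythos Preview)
Your overall architecture is right and matches the paper: intersections of a cross section with skew $1$-cells give illegal turns at valence-three vertices (Lemma~\ref{two-skew-intersections-illegal}), and then Lemma~\ref{lem:high-local-dimension} with assumption~(2) dropped yields the dimension bound. But there is a genuine gap in the step where you pass from an open condition to an open cone.

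The condition ``$z(d)\ge 1$ on each of a fixed finite set of skew $1$-cells'' is \emph{not} scale-invariant, so the open set it cuts out in $H^1(G;\RR)$ is not a cone. Any open cone about $r^*$ contains $r^*$ itself (and small multiples of nearby classes), for which the cocycle values on skew cells may be tiny; your openness argument therefore does not produce the cone $C$ you claim. The paper's fix is a one-line arithmetic observation that you are missing: choose $C$ to be the cone over $\{Mr^* + \sum_{i}\varepsilon_i\alpha_i : |\varepsilon_i|<1\}$ for a large integer $M$. Then any \emph{primitive integral} $\beta = c_r r^* + \sum c_i\alpha_i \in C$ has $|c_i| < c_r/M$; if $c_r\le M$ this forces all $c_i=0$, i.e.\ $\beta$ lies on the ray through $r^*$. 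Hence $\beta\in C\setminus(\RR_+\!\cdot r^*)$ implies $c_r>M$, and \emph{that} is what makes $z_\beta(d)$ large.

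Once you have this, the detours through $\varphi^m$ and through ``$k+2$ distinct skew cells, one intersection each'' become unnecessary and only create the extra distinctness worries you flag. The paper instead fixes a \emph{single} skew $1$-cell $d$ and chooses $M$ so that $Mz_r(d)-\sum_i|z_i(d)|>k+2$; then $z_\beta(d)>k+2$, giving at least $k+2$ intersection points of $\Theta_{z_\beta}$ with $d$. These are automatically distinct vertices of $\Theta_{z_\beta}$ (they are distinct points on $d$), each of valence three since $d$ has degree three in the refined cell structure, and Lemma~\ref{lem:high-local-dimension} finishes. Your approach would also go through after inserting the $c_r>M$ observation, but the single-cell version is shorter and sidesteps the bookkeeping you anticipated.
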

\begin{proof}
Let $X$ be a folded mapping torus of a train track map representing $\varphi$, and let $\alpha_1,\dots,\alpha_{b-1}\in H^1(G;\ZZ)\subset H^1(G;\RR)$ be such that $\alpha_1,\dots,\alpha_{b-1},r^*$ form a basis of $H^1(G;\ZZ)$. We pick a positive cocycle $z_r$ representing $r^*$ and for each $\alpha_i$, we choose a 1-cocycle $z_i$ representing $\alpha_i$.

We can now define the cone $C$ in terms of these cocycles. Pick $M_0$ large enough so that for all $1$-cells $e$ in the trapezoidal cell structure of the folded mapping torus, $M_0 z_r(e) - \sum_{i=1}^{b-1} |z_i(e)| > 0$. Let $d$ be a skew 1-cell, and pick $M_d$ to be large enough so that $M_d z_r(d) - \sum_{i=1}^{b-1} |z_i(d)| > k + 2$. Let $M$ be an integer greater than $M_0$ and $M_d$. Define $C\subseteq H^1(G;\RR)$ to be the cone over the set $\{Mr^* + \sum_{i=1}^{b-1} \eps_i \alpha_i \ | \ \eps_i\in (-1,1)\}$. Any $\beta = c_r r^* + \sum_{i=1}^{b-1} c_i \alpha_i\in C$ is represented by a cocycle $z_\beta = c_r z_r + \sum_{i=1}^{b-1} c_i z_i$. This cocycle is positive by the choice of $M > M_0$ and by the triangle inequality.

\par We claim that for any primitive integral class $\beta\in C\setminus \{r^*\}$, the axis bundle $\mathcal{A}_{\varphi_\beta}$ of the monodromy $\varphi_\beta$ has dimension at least $k$. We fix the positive cocycle $z_\beta$ representing $\beta$ described above. Note that since $\beta$ is primitive integral and not on the ray through $r^*$, we have that $c_r > M$. Additionally, $c_r\geq M|c_i|$ for all $1\leq i \leq b-1$ by definition of $C$. Then 
\begin{align*}
    z_\beta(d) & = \frac{c_r}{M} \left ( Mz_r(d) + \sum_{i=1}^{b-1} \frac{Mc_i}{c_r} z_i(d) \right ) \\
    & \geq \frac{c_r}{M} \left ( Mz_r(d) - \sum_{i=1}^{b-1} \left | \frac{Mc_i}{c_r} z_i(d)\right | \right ) \\
    & \geq \frac{c_r}{M} \left ( Mz_r(d) - \sum_{i=1}^{b-1} |z_i(d)| \right ) > k + 2.
\end{align*}
Thus $\Theta_{z_\beta}$ has at least $k+2$ points of intersection with the skew 1-cell $d$, hence the first return map has at least $k+2$ illegal turns at vertices of valence three by \Cref{two-skew-intersections-illegal}. Since $\beta$ is a primitive integral class in $\mathcal{P}$, $\Theta_{z_\beta}$ is connected, and the first return map of $\psi$ defines an outer automorphism $[\varphi_\beta]$ of $\pi_1(\Theta_{z_\beta})$. By \Cref{lem:high-local-dimension} the local dimension of the axis bundle $\mathcal{A}_{\varphi_\beta}$ at $\Theta_{z_\beta}$ is greater than or equal to $(k+2)-2=k$. This proves our result.
\end{proof}

We note that \Cref{thm:main-theorem-intro-version} is a consequence of \Cref{thm:projectively-discrete}: if $\varphi$ has a lone axis, then $\mathcal{A}_\varphi$ is homeomorphic to $\RR$, which is one-dimensional.

\section{Extended Example} \label{section:extended-example}
In this section we further investigate the automorphism $\varphi$ of \Cref{ex:automorphism}. Studying this automorphism is what led the author to \Cref{thm:projectively-discrete}. We show that the discreteness conclusion of \Cref{thm:projectively-discrete} cannot be improved to a finiteness statement. Specifically, for the free-by-cyclic group specified by $\varphi$ from \Cref{ex:automorphism}, we find that there are infinitely many primitive integral classes whose mondromies are lone axis. In fact, lone axis monodromies occur at every class that satisfies a necessary cohomological condition.

\begin{theorem} \label{thm:extended-example}
    Let $\varphi: F_3\to F_3$ be the automorphism defined by
    \begin{align*}
        a & \mapsto ca \\
        b & \mapsto ab \\
        c & \mapsto b\inv ab
    \end{align*}
    and $G = F_3\rtimes_\varphi \ZZ$ the mapping torus group of $\varphi$ with stable letter $r$. In the component $C_s$ of $\Sigma_s(G)$ determined by $\varphi$, there are infinitely many primitive integral classes with lone axis monodromies. Further, each such class lies on the intersection of $C_s$ with a codimension one affine subspace of $H^1(G;\RR)$.
\end{theorem}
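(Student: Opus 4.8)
The plan is to work throughout inside a folded mapping torus $X$ of the train track map $f:\Gamma\to\Gamma$ of \Cref{ex:automorphism}, using the refined cell structure of \Cref{lem:only-degree-3-skews}; this $X$ has exactly four skew $1$-cells $d_1,\dots,d_4$, one for each fold of the (unique) Stallings decomposition of $f$. First I would fix coordinates on $H^1(G;\RR)$: abelianizing $\varphi$ yields a matrix with $\det(\varphi_*-I)=0$ but $\operatorname{rk}(\varphi_*-I)=2$, so $H_1(G)\cong\ZZ^2$, generated by the images of $b$ and of the stable letter $r$, and $H^1(G;\RR)\cong\RR^2$. Write a class as $\alpha=p\,r^*+q\,b^*$ with $p=\alpha(r)$, $q=\alpha(b)$, so that $r^*=(1,0)$. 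From the trapezoidal cell structure of $X$ I would write down an explicit positive cocycle $z_r$ representing $r^*$ and a cocycle $z_b$ representing $b^*$, compute the values $z_r(d_i)$ and $z_b(d_i)$, and use this to describe the cone $\mathcal{P}\subseteq C_s$ and, for each primitive integral $\alpha\in\mathcal{P}$, the cross section $\Theta_\alpha$, its first-return train track map $f_\alpha$, and $N_\alpha=\operatorname{rk}\pi_1(\Theta_\alpha)$.

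Next, necessity of a cohomological constraint. Suppose $\varphi_\alpha$ has a lone axis. By \Cref{lem:mp16-lone-axis-unique-illegal-turn} the (expanding irreducible) train track map $f_\alpha$ has at most one illegal turn, hence exactly one. By \Cref{two-skew-intersections-illegal} distinct points of $\Theta_\alpha\cap(d_1\cup\cdots\cup d_4)$ produce distinct illegal turns of $f_\alpha$ (the turns lying just below the respective skew $1$-cells), so $\Theta_\alpha$ must meet the skew $1$-cells in a single point; expanded in terms of $z_\alpha(d_i)=p\,z_r(d_i)+q\,z_b(d_i)$ (with care about rounding) this cuts out a two-dimensional region of $C_s$. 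Intersecting it with the requirement of \Cref{thm:mp16-lone-axis-criteria}(1) that $\varphi_\alpha$ realize the extremal index $i(\varphi_\alpha)=\tfrac{3}{2}-N_\alpha$ — which is a single affine equation, since $N_\alpha$ (equivalently $-\chi(\Theta_\alpha)$) restricts to an affine-linear function of $\alpha$ on $C_s$, cf.\ \cite{McMullen2000,DKL1} — should collapse the lone-axis classes onto the intersection of $C_s$ with a single affine line $L$. Since $\varphi_{r^*}=\varphi$ has a lone axis by \Cref{ex:lone-axis}, $L$ passes through $r^*$.

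For the converse, I would show that every primitive integral $\alpha\in L\cap\mathcal{P}$ has lone-axis monodromy using \Cref{thm:mp16-lone-axis-criteria}. Full irreducibility of $\varphi_\alpha$ follows from \cite{DKL2,mut21} since $\varphi$ is fully irreducible, and one checks ageometricity as for $\varphi$ itself (e.g.\ via \cite{hm07parageom} and the absence of periodic Nielsen paths for $f_\alpha$, inherited from $f$). The heart of this step is the ideal Whitehead graph: using a uniform combinatorial model for $\Theta_\alpha$ and $f_\alpha$ valid for all $\alpha\in L\cap\mathcal{P}$, I expect each $f_\alpha$ to have its single illegal turn at a valence-three vertex, with the periodic directions permuted so that every local stable Whitehead graph is a triangle with no cut vertex and $i(\varphi_\alpha)=\tfrac{3}{2}-N_\alpha$, exactly mirroring \Cref{ex:lone-axis}; \Cref{thm:mp16-lone-axis-criteria} then gives a lone axis. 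I expect this to be the main obstacle: one must describe the first-return maps uniformly enough to control all of their ideal Whitehead graphs simultaneously, rather than arguing class by class.

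Finally, infinitude. The line $L$ has rational direction and passes through $r^*\in\operatorname{int}(C_s)$ with direction in $\operatorname{int}(C_s)$ (this being one of the computations above), so $L\cap C_s$ contains a ray; such a ray, emanating from the integral point $r^*$ in a primitive rational direction, contains infinitely many primitive integral classes, which completes the proof. The cell-structure bookkeeping needed to pin down $L$ exactly, and to verify its direction lies inside $C_s$, is routine, with the rounding in the condition ``$\#(\Theta_\alpha\cap\bigcup_i d_i)=1$'' the only subtle point.
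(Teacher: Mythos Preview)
Your overall strategy matches the paper's: isolate a single affine line in $C_s$ carrying all lone-axis classes, then verify the Mosher--Pfaff criteria uniformly along it. Two steps need sharpening.

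For necessity, you correctly deduce that a lone-axis monodromy forces $\Theta_\alpha$ to meet the union of skew $1$-cells exactly once, but then say this ``cuts out a two-dimensional region'' and invoke the index condition to reach a line. That last step does not work: in $i(\varphi_\alpha)=\tfrac{3}{2}-N_\alpha$ the right-hand side is affine in $\alpha$, but $i(\varphi_\alpha)$ is defined via the ideal Whitehead graph and is not a priori affine (or even locally constant) in $\alpha$, so it does not name a hyperplane until \emph{after} you have computed $\mathcal{IW}(\varphi_\alpha)$ --- precisely the work you are deferring to the sufficiency step. The observation you are missing, proved in the paper as \Cref{prop:skew-edge-nontrivial-loop}, is that because $\varphi$ itself has a lone axis the four skew $1$-cells of $X$ concatenate into a single homologically nontrivial loop $s$. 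Once $s$ is a cycle, the total intersection number of any cross section with it is the cohomological pairing $\alpha(s)$, so ``exactly one intersection'' is already the single affine equation $\alpha(s)=1$; here $s=r-b$ in $H_1$, giving the line $kb^*+(k+1)r^*$. No index argument is needed, and the ``rounding'' issue evaporates.

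For sufficiency, you assert that absence of periodic Nielsen paths is ``inherited from $f$''. This is true but is a genuine lemma, not a triviality: the paper proves it as \Cref{prop:pnp-invariance} by flowing a hypothetical periodic Nielsen path of $f_\alpha$ through $X$ via the semiflow to produce one for $f$, and conversely. Without this you cannot invoke the Nielsen-path-free description of $\mathcal{IW}(\varphi_\alpha)$ from \Cref{sec:axis-bundle}, and your planned computation of the local stable Whitehead graphs as triangles would be unjustified. The remainder of your plan --- writing down a uniform combinatorial model for $\Theta_k$ and $f_k$, tracking the $Df_k$-orbit of directions to see that every periodic turn is taken, and concluding via \Cref{thm:mp16-lone-axis-criteria} --- is exactly what the paper carries out in \Cref{lem:particular-automorphism-representation} and the final proof.
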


As discussed in \Cref{ex:automorphism,ex:lone-axis}, $\varphi$ is an ageometric fully irreducible automorphism with a lone axis in outer space, \Cref{fig:train-track-representative} shows a train track representative, and \Cref{fig:folded-mapping-torus,fig:folded-mapping-torus-cells} show the folded mapping torus $X$. We note that since $\varphi$ has a lone axis, any train track representative of $\varphi$ must lie on this axis, and so $X$ is unique up to flow line preserving homeomorphism.

The mapping torus group $G$ has the presentation $\langle a,b,c,r \ | \ r\inv a r = ca, r\inv b r = ab, r\inv c r = b\inv a b \rangle$. In the abelianization, we have that $c = 0$ from the first relation, then $a = 0$ from the third, so $H_1(G;\RR)\cong \RR^2$ and $b,r$ form a basis. Using the labels from \Cref{fig:folded-mapping-torus-cells}, we claim that $b,r$ are represented by the 1-cycles $\bar w - \bar v_2 + \bar v_3 - d_1 - 2d_4$ and $\bar v_1 + d_2 + \bar u_2$, respectively. We define $b^*,r^*\in H^1(G;\RR)$ to be the dual basis of $b,r$. As our first step in the proof, we show that $\mathcal{P} = C_s$.

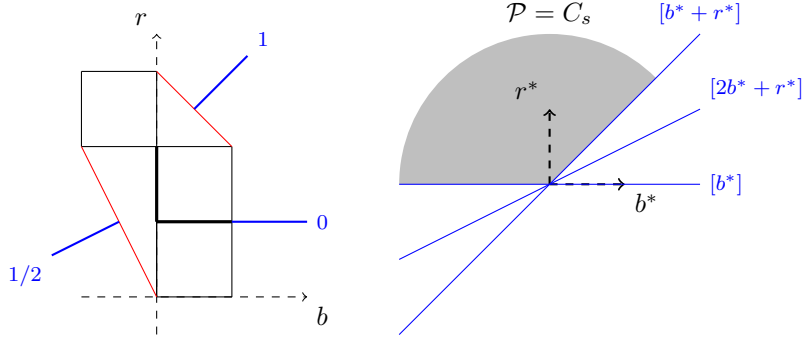
\begin{figure}
    \centering
\begin{tikzpicture}
    \draw[dashed, ->] (-1,0) -- (2,0) node[anchor=north west] {$b$};
    \draw[dashed, ->] (0,-0.5) -- (0,3.5) node[anchor = south east] {$r$};

    \draw (0,0) -- (0,1);
    \draw[very thick] (0,1) -- (0,2);
    \draw (0,0) -- (0,3);
    \draw (0,3) -- (-1,3);
    \draw (-1,3) -- (-1,2);
    \draw (-1,2) -- (0,2);
    \draw[very thick] (0,1) -- (1,1);
    \draw (1,1) -- (1,2);
    \draw (1,2) -- (0,2);
    \draw (0,2) -- (0,1);
    \draw (1,1) -- (1,0);
    \draw (1,0) -- (0,0);

    \draw[red] (0,0) -- (-1,2);
    \draw[red] (1,2) -- (0,3);

    \draw[blue,thick] (1,1) -- (2,1) node[anchor=west]{\footnotesize 0};
    \draw[blue,thick] (-0.5,1) -- + (206.565:1cm) node[anchor=north east]{\footnotesize 1/2}; 
    \draw[blue,thick] (0.5,2.5) -- + (45:1cm) node[anchor = south west]{\footnotesize 1}; 
    
\end{tikzpicture}
\qquad
\begin{tikzpicture}
    \fill[lightgray] (0,0) -- (-2,0) arc[start angle = 180, end angle = 45, radius = 2 cm] -- (0,0);
    \node[anchor=south] at (0,2){$\mathcal{P} = C_s$};
    \draw[dashed, thick, ->] (0,0) -- (1,0) node[anchor=north west] {$b^*$};
    \draw[dashed, thick, ->] (0,0) -- (0,1) node[anchor = south east] {$r^*$};
    \draw[blue] (0,0) -- (-2,-2);
    \draw[blue] (0,0) -- (2,2) node[anchor=south]{\footnotesize $[b^*+r^*]$};
    \draw[blue] (0,0) -- (2,1) node[anchor=south west]{\footnotesize $[2b^*+r^*]$};
    \draw[blue] (0,0) -- (-2,-1);
    \draw[blue] (0,0) -- (-2,0);
    \draw[blue] (0,0) -- (2,0) node[anchor=west]{\footnotesize $[b^*]$};
\end{tikzpicture}
    
    \caption{Brown Algorithm for calculation of BNS invariant for our example $G_\varphi$ discussed in \Cref{lem:brown-alg-equality}. The left picture shows the tracing out of the polygon from the algorithm, which reads off the relation of the 2-generator group ($r^3 b\inv r\inv b r\inv brb\inv r\inv br\inv b\inv$); the thick black edges are the edges traversed multiple times.
    The red edges are the edges of the polygon coming from taking the convex hull of the edges traced by the relator. The blue line segments perpendicular to some of the edges correspond to directions not in $\Sigma_s(G_\varphi)$, with their slopes labeled.
    \newline The right picture shows $H^1(G;\RR)$, and the blue lines here correspond to directions not in $\Sigma_s(G)$. The shaded gray region is $C_s$, a component of $\RR_+\cdot \Sigma_s(G)$. By \Cref{lem:brown-alg-equality}, this is equal to the positive cone $\mathcal{P}$.}
    \label{fig:brown-algorithm}
\end{figure}

\begin{lemma} \label{lem:brown-alg-equality}
    For the $\varphi$ of \Cref{thm:extended-example}, the positive cone $\mathcal{P}$ associated to $X$ is equal to $C_s$, which is a component of $\RR_+\cdot \Sigma_s(G)$.
\end{lemma}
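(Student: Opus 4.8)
The containment $\mathcal{P}\subseteq C_s$ holds by construction (that is how $C_s$ was singled out), so the content of the lemma is the reverse inclusion, and the plan is to compute both cones explicitly and observe that they agree. First I would reduce $G$ to a two-generator one-relator presentation: eliminating $a = (r\inv b r)b\inv$ via the relation $r\inv b r = ab$, and then $c = (r\inv a r)a\inv$ via $r\inv a r = ca$, turns the remaining relation $r\inv c r = b\inv a b$ into a single relator in $b$ and $r$, which after cyclic reduction is the word $w = r^3 b\inv r\inv b r\inv b r b\inv r\inv b r\inv b\inv$ recorded in \Cref{fig:brown-algorithm}. Since $\dim H^1(G;\RR)=2$, this places us in the range of Brown's algorithm for the BNS invariant of two-generator one-relator groups.

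Next I would run that algorithm. Trace the path in $H_1(G;\ZZ)\cong\ZZ^2$ whose vertices are the abelianized prefixes of $w$ (with $b\mapsto(1,0)$, $r\mapsto(0,1)$), and let $Q$ be the convex hull of these vertices, the polygon with red boundary in \Cref{fig:brown-algorithm}. Brown's criterion identifies $\Sigma(G)^c$ with the characters that are constant-maximal along an edge of $Q$ or attain their maximum on $Q$ at more than one vertex of the path; reading these off gives the finitely many excluded directions pictured (the perpendiculars of slopes $0$, $1/2$, $1$), so $\Sigma_s(G)^c$ is this set together with its antipode. The component of $\RR_+\cdot\Sigma_s(G)$ containing $r^*$ is then the open cone $C_s$ bounded by the rays through $-b^*$ and through $b^*+r^*$, that is, the shaded region of \Cref{fig:brown-algorithm}.

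It remains to compute $\mathcal{P}$ from the cell structure on $X$ and check it equals this cone. Using the refined trapezoidal cell structure of \Cref{lem:only-degree-3-skews} as drawn in \Cref{fig:folded-mapping-torus,fig:folded-mapping-torus-cells}, list the ten $1$-cells ($\bar v_1,\bar v_2,\bar v_3,\bar w,\bar u_1,\bar u_2$ and $d_1,\dots,d_4$) together with the coboundary relations imposed by the five trapezoidal $2$-cells. A class $\beta = x b^* + y r^*$ lies in $\mathcal{P}$ exactly when the linear system ``$z\in Z^1(X;\RR)$, $[z]=\beta$, and $z(e)>0$ for every $1$-cell $e$'' is feasible; fixing the two homological coordinates via $b = \bar w - \bar v_2 + \bar v_3 - d_1 - 2d_4$ and $r = \bar v_1 + d_2 + \bar u_2$ and carrying out this finite linear-programming computation produces precisely the cone $C_s$ found above, whence $\mathcal{P}=C_s$. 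The main obstacle is this last bookkeeping step: getting all five $2$-cell relations and the ten positivity constraints right, and translating between the $1$-cell coordinates and the basis $\{b,r\}$ of $H_1$; a secondary subtlety is the orientation convention and the precise extreme-point condition in Brown's algorithm. One could instead avoid the full linear program by verifying only that each bounding ray of $C_s$ lies in $\overline{\mathcal{P}}$ (for instance by exhibiting positive cocycles whose classes limit to it), but identifying the degenerating $1$-cell requires essentially the same analysis, so the direct computation seems cleanest.
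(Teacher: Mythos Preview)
Your proposal is correct and largely matches the paper's argument. The reduction to a two-generator one-relator presentation and the application of Brown's algorithm to identify $C_s$ are exactly as the paper carries them out. The one difference is in the final step, showing $\mathcal{P}\supseteq C_s$: rather than setting up and solving the linear feasibility system you describe, the paper takes (a variant of) the alternative you mention at the end. It simply writes down, for every $t<1$, an explicit positive cocycle $z_t$ representing $r^*+tb^*$, depending on a small parameter $\eps>0$ and $\delta=1-t-3\eps$ (displayed in \Cref{fig:positive-cocycles}), and then verifies directly that $z_t(b)=t$ and $z_t(r)=1$ on the chosen $1$-cycles. This one-parameter family exhibits all of $C_s$ inside $\mathcal{P}$ in one stroke, which is a bit quicker than the full linear program; of course, \emph{finding} such a $z_t$ amounts to the same bookkeeping you flag as the main obstacle.
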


\begin{proof}
    We first use Tietze transformations to find a two-generator one-relator presentation of $G$:
    \begin{align*}
        G & = \langle a,b,c,r | r\inv a r = ca, r\inv b r = ab, r\inv c r = b\inv a b \rangle \\
        & \cong \langle b,r | r^3 b\inv r\inv b r\inv brb\inv r\inv br\inv b\inv \rangle.
    \end{align*}
    There is an algorithm to calculate $\Sigma(G)$ given this two-generator one-relator presentation due to Brown \cite[Section 4]{Brown1987}, though we note that Brown's $\Sigma(G)$ and ours disagree by a negative due to a difference in convention between left and right actions, see \cite{DKL2}.
    We now describe Brown's procedure in our case; see \Cref{fig:brown-algorithm}. In the $(b,r)$-plane, we trace out an associated polygon by reading off the relator of the presentation: we draw a length one edge in the positive (negative) $b$ direction for each positive (negative) power of $b$, and we similarly travel in the $r$ direction for each power of $r$. We take the convex hull $P$ of this polygon, and we consider the directions perpendicular to some of the edges of $P$. The outward perpendicular of any diagonal edge of $P$ is not in $-\Sigma(G)$ (Brown's $\Sigma(G)$), nor are outward perpendiculars of any vertical or horizontal edge that has length at least two. Other directions can potentially be excluded if any `corner vertex' of $P$ was traversed twice while reading off the relator; this does not occur in this example. Thus $\Sigma_s(G)$ is the set of directions except $\pm (b^* + r^*), \pm (2b^*+r^*), \pm b^*$. So the component $C_s$ of $\RR_+\cdot \Sigma_s(G)$ containing $r^*$ is the cone over the set $\{t b^* + r^* \ | \ -\infty < t < 1\}$, since the slopes $0$ and $1$ represent the boundary of the component.

    In \Cref{fig:positive-cocycles} we construct a family of positive cocycles $z_t$ for $t < 1$. We claim that $z_t$ represents the class $r^* + tb^*\in C_s$. Noting that $\delta = 1-t-3\eps$, we have that
    \begin{align*}
        z_t(b) & = z_t(\bar w - \bar v_2 + \bar v_3 - d_1 - 2d_4) = 1 - 2\eps + \delta - \eps - 2\delta = t; \\
        z_t(r) & = z_t(\bar v_1 + d_2 + \bar u_2) = \eps + \eps + (1-2\eps) = 1.
    \end{align*}
    Since we have a positive cocycle representing $r^* + tb^*$ for all $t < 1$, this implies that $\{tb^* + r^* \ | \ -\infty < t < 1\}\subseteq \mathcal{P}$, completing the proof.
\end{proof}

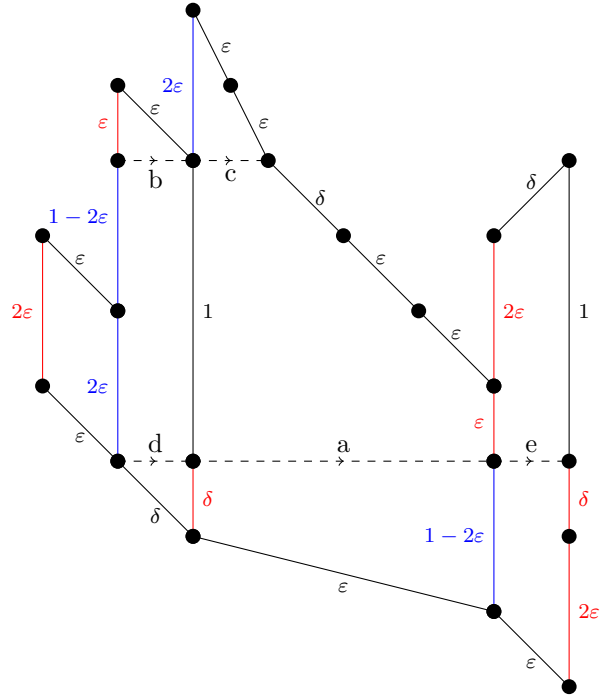
\begin{figure}
    \centering
    \begin{tikzpicture}
    \draw[red] (-1,1) -- (-1,3) node[midway,left]{\footnotesize $2\varepsilon$};
    \draw (-1,1) -- (0,0) node[midway, below]{\footnotesize $\varepsilon$};
    \draw (-1,3) -- (0,2) node[midway, above]{\footnotesize $\varepsilon$};
        \node[circle,fill,inner sep=2pt] at (-1,1){};
        \node[circle,fill,inner sep=2pt] at (-1,3){};
        \node[circle,fill,inner sep=2pt] at (0,0){};
        \node[circle,fill,inner sep=2pt] at (0,2){};

    \draw[blue] (0,0) -- (0,4) node[pos=1/4,left]{\footnotesize $2\varepsilon$} node[pos=13/16,left]{\footnotesize $1-2\varepsilon$};
    \draw[red] (0,4) -- (0,5) node[midway,left]{\footnotesize $\varepsilon$};
    \draw[red] (1,-1) -- (1,0) node[midway,right]{\footnotesize $\delta$};
    \draw (1,0) -- (1,4) node[midway,right]{\footnotesize $1$};
    \draw (0,5) -- (1,4) node[midway, above]{\footnotesize $\varepsilon$};
    \draw (0,0) -- (1,-1) node[midway, below]{\footnotesize $\delta$};
        \node[circle,fill,inner sep=2pt] at (0,0){};
        \node[circle,fill,inner sep=2pt] at (0,2){};
        \node[circle,fill,inner sep=2pt] at (0,4){};
        \node[circle,fill,inner sep=2pt] at (0,5){};
        \node[circle,fill,inner sep=2pt] at (1,-1){};
        \node[circle,fill,inner sep=2pt] at (1,0){};
        \node[circle,fill,inner sep=2pt] at (1,4){};

    \draw[blue] (1,4) -- (1,6) node[midway,left]{\footnotesize $2\varepsilon$};
    \draw[blue] (5,-2) -- (5,0) node[midway,left]{\footnotesize $1-2\varepsilon$};
    \draw (1,6) -- (2,4) node[pos=0.25, right]{\footnotesize $\varepsilon$} node[pos=0.75,right]{\footnotesize $\varepsilon$};
    \draw(2,4) -- (5,1) node[pos=1/6, right]{\footnotesize $\delta$} node[midway,above]{\footnotesize $\varepsilon$} node[pos=5/6, above]{\footnotesize $\varepsilon$};
    \draw (1,-1) -- (5,-2) node[midway,below]{\footnotesize $\varepsilon$};
        \node[circle,fill,inner sep=2pt] at (1,-1){};
        \node[circle,fill,inner sep=2pt] at (1,0){};
        \node[circle,fill,inner sep=2pt] at (1,4){};
        \node[circle,fill,inner sep=2pt] at (1,6){};
        \node[circle,fill,inner sep=2pt] at (1.5,5){};
        \node[circle,fill,inner sep=2pt] at (2,4){};
        \node[circle,fill,inner sep=2pt] at (3,3){};
        \node[circle,fill,inner sep=2pt] at (4,2){};
        \node[circle,fill,inner sep=2pt] at (5,1){};
        \node[circle,fill,inner sep=2pt] at (5,0){};
        \node[circle,fill,inner sep=2pt] at (5,-2){};
    
    \draw[red] (5,0) -- (5,3) node[pos=1/6,left]{\footnotesize $\varepsilon$} node[pos=2/3,right]{\footnotesize $2\varepsilon$};
    \draw[red] (6,-3) -- (6,0) node[pos=1/3,right]{\footnotesize $2\varepsilon$} node[pos=5/6,right]{\footnotesize $\delta$};
    \draw (6,0) -- (6,4) node[midway,right]{\footnotesize $1$};
    \draw (5,3) -- (6,4) node[midway, above]{\footnotesize $\delta$};
    \draw(5,-2) -- (6,-3) node[midway, below]{\footnotesize $\varepsilon$};
        \node[circle,fill,inner sep=2pt] at (5,-2){};
        \node[circle,fill,inner sep=2pt] at (5,0){};
        \node[circle,fill,inner sep=2pt] at (5,1){};
        \node[circle,fill,inner sep=2pt] at (5,3){};
        \node[circle,fill,inner sep=2pt] at (6,4){};
        \node[circle,fill,inner sep=2pt] at (6,0){};
        \node[circle,fill,inner sep=2pt] at (6,-1){};
        \node[circle,fill,inner sep=2pt] at (6,-3){};

    \draw[dashed,decoration={markings, mark=at position 0.5 with {\arrow{>}}}, postaction={decorate}] (0,0) -- (1,0) node[midway,above]{d};
    \draw[dashed,decoration={markings, mark=at position 0.5 with {\arrow{>}}}, postaction={decorate}] (0,4) -- (1,4) node[midway,below]{b};
    \draw[dashed,decoration={markings, mark=at position 0.5 with {\arrow{>}}},postaction={decorate}] (1,0) -- (5,0) node[midway,above]{a};
    \draw[dashed,decoration={markings, mark=at position 0.5 with {\arrow{>}}},postaction={decorate}] (5,0) -- (6,0) node[midway,above]{e};
    \draw[dashed,decoration={markings, mark=at position 0.5 with {\arrow{>}}},postaction={decorate}] (1,4) -- (2,4) node[midway,below]{c};
    \end{tikzpicture}

    \caption{A view of the folded mapping torus $X$ with 1-cells $e$ labeled by $z_t(e)$, where $z_t$ is a positive cocycle. To define $z_t$ for $t < 1$, we choose $0 < \eps < \min\{(1-t)/3,1/2\}$ and $\delta = 1 - t - 3\eps$, so that $\eps,\delta,1-2\eps > 0$. In the proof of \Cref{lem:brown-alg-equality}, we show $z_t$ represents the class $r^* + tb^*$.}
    \label{fig:positive-cocycles}
\end{figure}

For the purposes of proving \Cref{thm:extended-example}, it would suffice to check the below result for our running example, which can be done directly by analyzing \Cref{fig:folded-mapping-torus} or \Cref{fig:folded-mapping-torus-cells}. The general statement may perhaps be of independent interest, so we prove it here.

\begin{proposition} \label{prop:skew-edge-nontrivial-loop}
    For any lone axis automorphism $\varphi$ of $F_N$ with folded mapping torus $X$, the union of the skew 1-cells of $X$ is a homologically nontrivial loop.
\end{proposition}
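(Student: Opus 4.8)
The plan is to establish two claims: that the union $\gamma$ of the skew $1$-cells of $X$ is an embedded circle, and that $[\gamma]\neq 0$ in $H_1(X;\RR)=H_1(G;\RR)$. The starting point is that, since $\varphi$ has a lone axis, \Cref{lem:mp16-lone-axis-unique-illegal-turn} forces any train track representative $f\colon\Gamma\to\Gamma$ to have a single illegal turn, so the Stallings fold decomposition $\Gamma\xrightarrow{\pi}\Gamma_0\xrightarrow{q_1}\cdots\xrightarrow{q_m}\Gamma_m\xrightarrow{h}\Gamma$ is a single cascade of folds with no ambiguity of ordering. By \Cref{lem:only-degree-3-skews} we work with a cell structure in which every skew $1$-cell has degree three; recall that such a skew $1$-cell is a ``diagonal'' arc recording a single fold, is transverse to the semiflow $\psi$, and is strictly monotone with respect to the local time coordinate (running from the illegal-turn vertex at which the fold is performed up to the vertex at which the two folded edges have their far endpoints identified).

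First I would show $\gamma$ is a circle. Fix a skew $1$-cell $d$. Using the degree-three structure — $d$ is the bottom of a unique $2$-cell $T(d)$ and lies in the top of exactly two $2$-cells — and the fact that at every time there is a unique illegal turn, I would track the two endpoints of $d$ through the folding process and show that each is an endpoint of exactly one other skew $1$-cell, namely the fold immediately preceding or following $d$ in the cascade; moreover the induced orientations (in the direction of increasing time) match up. This exhibits $\gamma$ as a disjoint union of circles, each strictly monotone with respect to $\psi$. Connectedness then follows because there is a single cascade: the folds $q_1,\dots,q_m$, read cyclically (with $q_m$ followed by applying $f$ and restarting at $q_1$), produce skew $1$-cells each of which shares an endpoint with the next, so all of them lie on one circle. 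I expect this step to be the main obstacle: the delicate point is to pin down exactly which vertex and which time the endpoints of consecutive skew $1$-cells occupy — that is, that the far endpoint of the fold $q_i$ is precisely the illegal-turn vertex at which $q_{i+1}$ is performed, so that $\gamma$ really is a union of skew $1$-cells with no intervening vertical $1$-cell segments — and this is exactly where the lone-axis hypothesis is used in an essential way (a representative with several illegal turns would in general give several such circles).

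Finally I would show $[\gamma]\neq 0$. Choose any primitive integral class $\alpha\in\mathcal{P}$, for instance $r^*$ itself (dual to the cross section $\Gamma$), together with a positive cocycle $z$ representing it, giving the map $\eta_z\colon X\to S^1$ and a cross section $\Theta_z$ of $\psi$. Since each skew $1$-cell is transverse to $\psi$ and monotone in the time direction, the loop $\gamma$ is strictly monotone increasing with respect to $\eta_z$, and therefore the net change of $\eta_z$ around $\gamma$ — which is exactly $\langle\alpha,[\gamma]\rangle$ — is a strictly positive integer. Equivalently, $\langle\alpha,[\gamma]\rangle$ is the signed count of intersections of $\gamma$ with $\Theta_z$, all of which carry the same sign because $\psi$ crosses both $\gamma$ and $\Theta_z$ upward, and there is at least one such intersection since the first return map of $\psi$ to $\Theta_z$ is an expanding irreducible train track map and hence has at least one illegal turn, which by \Cref{two-skew-intersections-illegal} is recorded by a crossing of $\Theta_z$ with a skew $1$-cell. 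In either form, $\langle\alpha,[\gamma]\rangle\neq 0$, so $[\gamma]\neq 0$, which completes the proof.
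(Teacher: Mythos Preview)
Your plan is sound and follows essentially the same route as the paper for the ``loop'' half, while taking a different (and equally valid) route for nontriviality.

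For the loop claim, the paper does exactly what you isolate as the delicate point: it shows that the terminal vertex $w$ of the skew $1$-cell for the fold $q_{i}$ is the illegal-turn vertex of $\Gamma_{i}$, so that the next skew $1$-cell starts there.  The mechanism is a short contradiction: each intermediate $\Gamma_i$ is itself a train track graph for $\varphi$, hence by \Cref{lem:mp16-lone-axis-unique-illegal-turn} has a \emph{unique} illegal turn; if that turn sat at some $u\neq w$, one pulls it back along $q_i$ (treating separately the cases $u\neq q_i(v)$ and $u=q_i(v)$) to exhibit a second illegal turn in $\Gamma_{i-1}$.  You have identified this reduction correctly; the missing ingredient in your outline is precisely this pull-back argument, and it is worth noting that you need the lone-axis hypothesis at \emph{every} intermediate stage $\Gamma_i$, not just for $\Gamma$ itself.

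For nontriviality the two arguments diverge.  The paper simply observes that $\gamma$ is not homotopic into the fiber $\Gamma\subset X$, so in $H_1(X)$ it has the form $ta+r$ with $a\in H_1(\Gamma)$ and is therefore nonzero.  Your argument --- evaluating a positive cocycle $z$ on $\gamma$ to get $\langle\alpha,[\gamma]\rangle=\sum_i z(d_i)>0$, or equivalently counting (positively signed) intersections of $\gamma$ with a cross section --- is a genuine alternative.  It has the virtue of fitting directly into the positive-cone machinery already set up, and in fact yields slightly more: it shows $\alpha([\gamma])>0$ for every $\alpha\in\mathcal{P}$, which is exactly the inequality used downstream to constrain lone-axis classes.
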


\begin{proof}
    Let $f: \Gamma\to \Gamma$ be a train-track representative of $\varphi$. Consider a Stallings fold decomposition of $f$:
    \[
        \Gamma\xrightarrow{\pi} \Gamma_0 \xrightarrow{q_1} \Gamma_1 \xrightarrow{q_2} \dots \xrightarrow{q_k} \Gamma_k \xrightarrow{h} \Gamma.
    \]
    Each $q_i$ is a single fold and the final map $h$ is a homeomorphism. For $0\leq i \leq k$ the graph $\Gamma_i$ has a vertex set of $(hq_k\dots q_{i+1})\inv(V(\Gamma))$ and edges labeled by the edges of $\Gamma$. Each graph $\Gamma_i$ is a train track graph for $\varphi$, with train track representative $(q_i\dots q_1)\pi h(q_k\dots q_{i+1})$. Since $\varphi$ has a lone axis, by \Cref{lem:mp16-lone-axis-unique-illegal-turn} each $\Gamma_i$ has a unique illegal turn. Because the illegal turn is unique, this turn is mapped to a degenerate turn, so there is an illegal turn at a vertex $v_i\in V(\Gamma_i)$ if and only if two edges with the same oriented label are incident to $v_i$.

    We note that the graphs $\Gamma_i$ are embedded `horizontally' in the folded mapping torus $X$ and that the semiflow $\psi$ takes $\Gamma_i$ to $\Gamma_{i+1}$ exactly as the map $q_{i+1}$. The associated skew 1-cell $s_{i+1}$ of $X$ begins at the vertex $v$ of $\Gamma_i$ which had the illegal turn and ends at a vertex $w$ of $\Gamma_{i+1}$ that is the $q_{i+1}$ image of two distinct vertices of $\Gamma_i$. For the union of the skew 1-cells to form a loop, we must have that the illegal turn of $\Gamma_{i+1}$ occurs at the vertex $w$, so that the next skew 1-cell $s_{i+2}$ starts at the endpoint of $s_{i+1}$. We will show that this is the case in the language of the Stallings fold decomposition.
    
    For simplicity, consider $q_1: \Gamma_0 \to \Gamma_1$; the general case will follow in the same manner. Let $v$ be the vertex of $\Gamma_0$ with an illegal turn $\{a_1, a_2\}$. On the level of vertices, the restricted map $q_1|_{V(\Gamma_0)}: V(\Gamma_0)\to V(\Gamma_1)$ is a surjection where the only failure of injectivity is the identification of the two vertices $w_1, w_2\in V(\Gamma_0)$ which are the opposite endpoints of the folded edges $a_1$ and $a_2$. Let $w = q_1(w_1) = q_1(w_2)\in V(\Gamma_1)$ and let $u\in V(\Gamma_1)$ be the vertex with the illegal turn of $\Gamma_1$, which we call $\{e_1,e_2\}$. Again, note that $e_1$ and $e_2$ carry the same oriented edge label. Assume for the sake of contradiction that $u\neq w$: we will show that this implies there are two illegal turns in $\Gamma_0$, contradicting that $\varphi$ has a lone axis via \Cref{lem:mp16-lone-axis-unique-illegal-turn}.

    Case 1: $u\neq w$, $u\neq q_1(v)$. Let $u_0 = q_1\inv(u)\in \Gamma_0$, this is well-defined because $u\neq w$. Since $u$ is neither $q_1(v)$ nor $w$, $q_1$ is a local homeomorphism around $u_0$ preserving the edge labels. Since $u$ has an illegal turn $\{e_1,e_2\}$, this can be pulled back to give an illegal turn $\{q_1\inv(e_1), q_1\inv(e_2)\}$ of $u_0$ in $\Gamma_0$. Thus $\Gamma_0$ has two illegal turns, contradicting the lone axis assumption.

    Case 2: $u = q_1(v) \neq w$. Now, there are edges $e_1,e_2$ incident to $u$ carrying identical edge labels. Let $b_1,b_2$ be edges of $\Gamma_0$ such that $q_1(b_1) = e_1$ and $q_1(b_2) = e_2$. One of these choices may not be unique if, say, $e_1$ is the image of the folded edges $a_1$ and $a_2$. If so, there are two illegal turns at $v$, namely $\{a_1, a_2\}$ and $\{a_1, b_2\}$.
    If there are unique choices of $b_1$ and $b_2$, there are still two illegal turns at $v$: the turn $\{b_1, b_2\}$ which is the preimage of $\{e_1,e_2\}$ and the the original turn where the fold was performed $\{a_1,a_2\}$. Again, this contradicts the lone axis assumption and we have that $s$ must be a loop.

    Finally, the loop  $s$ is nontrivial in homology as it is not freely homotopic into the base graph $\Gamma\subset X$. In $H_1(X)$, $s = ta+r$ for some $a\in H_1(\Gamma)$ and $t\in\ZZ$.
\end{proof}

\begin{lemma}
    For the mapping torus group $G$ of $\varphi$ from \Cref{thm:extended-example}, all primitive integral lone axis automorphisms in $C_s$ lie on the line $kb^* + (k+1) r^*$.
\end{lemma}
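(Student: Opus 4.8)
The plan is to isolate a cohomological obstruction coming from the illegal turns of the monodromies, and then evaluate it against the homology class of the loop of skew $1$-cells, using the positive cocycles already constructed.

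\emph{The necessary condition.} Suppose $\beta\in C_s=\mathcal P$ is a primitive integral class whose monodromy $\varphi_\beta$ has a lone axis; then $\varphi_\beta$ is a nongeometric fully irreducible automorphism, and the first return map of $\psi$ on the section $\Theta_{z_\beta}$ is an expanding irreducible train track representative of $\varphi_\beta$. By \Cref{lem:mp16-lone-axis-unique-illegal-turn} this representative has at most one illegal turn, and being expanding it has at least one, so it has exactly one. By \Cref{two-skew-intersections-illegal}, each intersection of $\Theta_{z_\beta}$ with a skew $1$-cell produces an illegal turn of this map, and distinct intersection points produce illegal turns at distinct vertices of $\Theta_{z_\beta}$; hence $\Theta_{z_\beta}$ meets the union $s$ of the skew $1$-cells at most once. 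By \Cref{prop:skew-edge-nontrivial-loop}, $s$ is a coherently oriented loop, so the total number of intersections of $\Theta_{z_\beta}$ with $s$ equals $z_\beta(s)=\langle\beta,[s]\rangle$, which is a strictly positive integer since $z_\beta$ is positive on every skew $1$-cell and $[s],\beta$ are integral. Combining the two bounds yields $\langle\beta,[s]\rangle=1$.

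\emph{Computing $[s]$.} By \Cref{prop:skew-edge-nontrivial-loop} and its proof, $[s]=r+mb$ in $H_1(G;\RR)\cong\RR^2$ for some $m\in\ZZ$: the loop $s$ winds once around the base $S^1$, and its remaining contribution lies in the image of $H_1(\Gamma)$, which abelianizes onto the span of $b$. To pin down $m$ I would evaluate the positive cocycle $z_t$ of \Cref{fig:positive-cocycles} on $s$. Reading off \Cref{fig:folded-mapping-torus-cells}, the loop $s$ is the coherently oriented concatenation of the four skew $1$-cells $d_1,d_4,d_3,d_2$ appearing as the bottom edges of the four trapezoids, on which $z_t$ takes the values $\eps,\delta,\eps,\eps$; hence $z_t(s)=3\eps+\delta=1-t$. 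Since $z_t$ represents $r^*+tb^*$ and $[s]=r+mb$, comparing gives $1+tm=1-t$, so $m=-1$ and $[s]=r-b$.

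\emph{Conclusion and main obstacle.} Writing a primitive integral $\beta\in C_s$ as $\beta=kb^*+\ell r^*$ with $k,\ell\in\ZZ$, the equation $\langle\beta,[s]\rangle=\langle kb^*+\ell r^*,\,r-b\rangle=\ell-k=1$ forces $\ell=k+1$, so every lone axis monodromy in $C_s$ has class on the line $\{kb^*+(k+1)r^*\}$; each such class is automatically primitive since $\gcd(k,k+1)=1$, and by the description of $C_s$ in \Cref{lem:brown-alg-equality} those with $k\ge 0$ indeed lie in $C_s$. I expect the main obstacle to be the bookkeeping in the middle step --- correctly matching the skew $1$-cells $d_i$ to the bottom edges of the trapezoids in \Cref{fig:folded-mapping-torus-cells}, tracking their orientations inside the loop $s$, and hence computing $z_t(s)$ --- together with the routine but necessary observation that all intersections of the cross section $\Theta_{z_\beta}$ with the coherently oriented loop $s$ carry the same sign. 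The remaining steps are formal given \Cref{lem:mp16-lone-axis-unique-illegal-turn}, \Cref{two-skew-intersections-illegal}, and \Cref{prop:skew-edge-nontrivial-loop}.
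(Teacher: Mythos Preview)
Your argument is correct and follows essentially the same route as the paper: use \Cref{prop:skew-edge-nontrivial-loop} and \Cref{two-skew-intersections-illegal} to obtain the constraint $\beta(s)=1$, identify $[s]=r-b$, and read off $\ell-k=1$. Your version is in fact more explicit than the paper's, which simply asserts $s=r-b$ in homology; your computation $z_t(s)=3\eps+\delta=1-t$ via the cocycles of \Cref{fig:positive-cocycles} is a clean way to nail down the coefficient of $b$.
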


We note that for $k = 0$, this expression recovers $r^*$, which is associated to $\varphi$. The section corresponding to $b^* + 2r^*$ ($k=1$) is depicted in \Cref{fig:folded-mapping-torus}, and sections for arbitrary $k$ are shown in \Cref{fig:possible-lone-axis-sections}.

\begin{proof}
    Let $s$ be the skew loop guaranteed by \Cref{prop:skew-edge-nontrivial-loop}.  By \Cref{two-skew-intersections-illegal}, for a primitive integral class $\alpha\in \mathcal{P}\subseteq H^1(G; \RR)$ with monodromy automorphism $\varphi_\alpha$ having a lone axis, any corresponding section $\Theta_\alpha$ needs to intersect $s$ only once. This gives a cohomological constraint on the lone axis automorphisms within $\mathcal{P}$: $\alpha(s) = 1$. For our example, $s = r - b$ in homology, so $r^*(s) = 1$ and $b^*(s) = -1$. Hence the cohomology class $\alpha\in \mathcal{P}$ with $\alpha(s) = 1$ must have the form $kb^* + (k+1)r^*$.
\end{proof}

\begin{figure}
    \centering
    \begin{tikzpicture}[scale=1.7]

    \draw[dashed] (0,0) -- (6,0);
    \draw[dashed] (0,4) -- (2,4);

    \draw[red] (-1,1) -- (-1,3);
    \draw (-1,1) -- (0,0) node[midway,left]{\footnotesize $d_1$};
    \draw (-1,3) -- (0,2) node[midway,above]{\footnotesize $d_3$};
        \draw[dotted,thick] (-0.5,0.5) -- (-0.5,2.5);
        \draw[violet,thick] (-0.5,0.5) -- (0,0.5) node[midway,above]{\footnotesize $e_1$};
    
    \draw[blue] (0,0) -- (0,4) node[pos=1/4,left]{\footnotesize $\bar{u}_1$} node[pos=3/4,left]{\footnotesize $\bar{u}_2$};
    \draw[red] (0,4) -- (0,5) node[midway,left]{\footnotesize $\bar{v}_1$};
    \draw[red] (1,-1) -- (1,0);
    \draw (1,0) -- (1,4) node[pos=1/4,left]{\footnotesize $\bar{w}$};
    \draw (0,5) -- (1,4) node[midway,above]{\footnotesize $d_1$};
    \draw (0,0) -- (1,-1) node[midway,below]{\footnotesize $d_4$};
        \draw[violet,thick] (0,4.5) -- (0.5,4.5) node[midway,below]{\footnotesize $s_2$};
        \draw[violet, fill] (0,2.75) -- (0,3.25) -- (1,2.25) -- (1,1.75) -- (0,2.75);
        \draw[violet,thick] (0,0.5) -- (1,1.75) node[midway,above,sloped]{\footnotesize $e_{4,1}$};

        \draw[violet] (0,3.25) -- (1,2.25) node[pos=0.375,above,sloped]{\footnotesize $e_{4,2} \dots e_{4,k+1}$};

    \draw[blue] (1,4) -- (1,6) node[midway,left]{\footnotesize $\bar{u}_1$};
    \draw[blue] (5,-2) -- (5,0) node[midway,right]{\footnotesize $\bar{u}_2$};
    \draw (1,6) -- (2,4) node[pos=1/4,right]{\footnotesize $d_2$} node[pos=3/4,right]{\footnotesize $d_1$};
    \draw(2,4) -- (5,1) node[pos=1/6,above]{\footnotesize $d_4$} node[midway,above]{\footnotesize $d_3$} node[pos=5/6,above]{\footnotesize $d_2$};
    \draw (1,-1) -- (5,-2) node[midway,below]{$d_3$};
        \draw[dotted, thick] (3,-1.5) -- (3,3);
        \draw[violet,thick] (1,4.5) -- (1.75,4.5) node[midway,below]{\footnotesize $s_1$};
        \draw[violet,thick] (1,2.25) -- (5,0.5) node[pos=1/4,above] {\footnotesize $t_{k+1}$} node[pos=3/4,above]{\footnotesize $e_{3,k+1}$};
        \draw[violet,fill] (1,1.75) -- (1,2.25) -- (5,-0.75) -- (5,-1.25) -- (1,1.75);
        \draw[violet] (1,1.75) -- (5,-1.25) node[pos=1/4,sloped,below]{\footnotesize $t_1 \dots t_k$} node[pos = 3/4,sloped,below]{\footnotesize $e_{3,1}\dots e_{3,k}$};
    
    \draw[red] (5,0) -- (5,3) node[pos=1/6,right]{\footnotesize $\bar{v}_1$};
    \draw[red] (6,-3) -- (6,0);
    \draw (6,0) -- (6,4) node[midway,right]{\footnotesize $\bar{w}$};
    \draw (5,3) -- (6,4) node[midway,above]{\footnotesize $d_4$};
    \draw(5,-2) -- (6,-3) node[midway,below]{\footnotesize $d_2$};

        \draw[violet,thick] (5,0.5) -- (6,2.25) node[midway,above,sloped]{\footnotesize $e_{2,k+1}$};
        \draw[violet,fill] (5,-1.25) -- (5,-0.75) -- (6,2.25) -- (6,1.75) -- (5,-1.25);
        \draw[violet] (5,-1.25) -- (6,1.75) node[pos=1/2,sloped,below]{\footnotesize $e_{2,1}\dots e_{2,k}$};

        \node[violet,star,star points=5,fill,inner sep=2pt] at (-0.5,0.5){};
        \node[violet,star,star points=5,fill,inner sep=2pt] at (0.5,4.5){};
        \node[violet,star,star points=5,fill,inner sep=2pt] at (1.75,4.5){};
        \node[circle,fill,inner sep=2pt] at (-1,1){};
        \node[circle,fill,inner sep=2pt] at (-1,3){};
        \node[circle,fill,inner sep=2pt] at (0,0){};
        \node[circle,fill,inner sep=2pt] at (0,2){};
        \node[circle,fill,inner sep=2pt] at (0,4){};
        \node[circle,fill,inner sep=2pt] at (0,5){};
        \node[circle,fill,inner sep=2pt] at (1,-1){};
        \node[circle,fill,inner sep=2pt] at (1,0){};
        \node[circle,fill,inner sep=2pt] at (1,4){};
        \node[circle,fill,inner sep=2pt] at (1,6){};
        \node[circle,fill,inner sep=2pt] at (1.5,5){};
        \node[circle,fill,inner sep=2pt] at (2,4){};
        \node[circle,fill,inner sep=2pt] at (3,3){};
        \node[circle,fill,inner sep=2pt] at (4,2){};
        \node[circle,fill,inner sep=2pt] at (5,1){};
        \node[circle,fill,inner sep=2pt] at (5,0){};
        \node[circle,fill,inner sep=2pt] at (5,-2){};
        \node[circle,fill,inner sep=2pt] at (5,3){};
        \node[circle,fill,inner sep=2pt] at (6,4){};
        \node[circle,fill,inner sep=2pt] at (6,0){};
        \node[circle,fill,inner sep=2pt] at (6,-1){};
        \node[circle,fill,inner sep=2pt] at (6,-3){};
    \end{tikzpicture}
    \caption{Depiction of a general section graph $\Theta_k$ dual to the class $kb^* + (k+1)r^*$. The thick purple box segments represent $k$ parallel edges, and the thinner purple edges represent single edges. There are vertices of $\Theta_k$ where it intersects 1-cells of $X$ and also where the section meets the vertical dotted line; this ensures that the first return map sends vertices to vertices. The edges of $\Theta_k$ are oriented from left to right, and where $k$ parallel edges are labeled, they are listed from the bottom edge to the top edge.}
    \label{fig:possible-lone-axis-sections}
\end{figure}
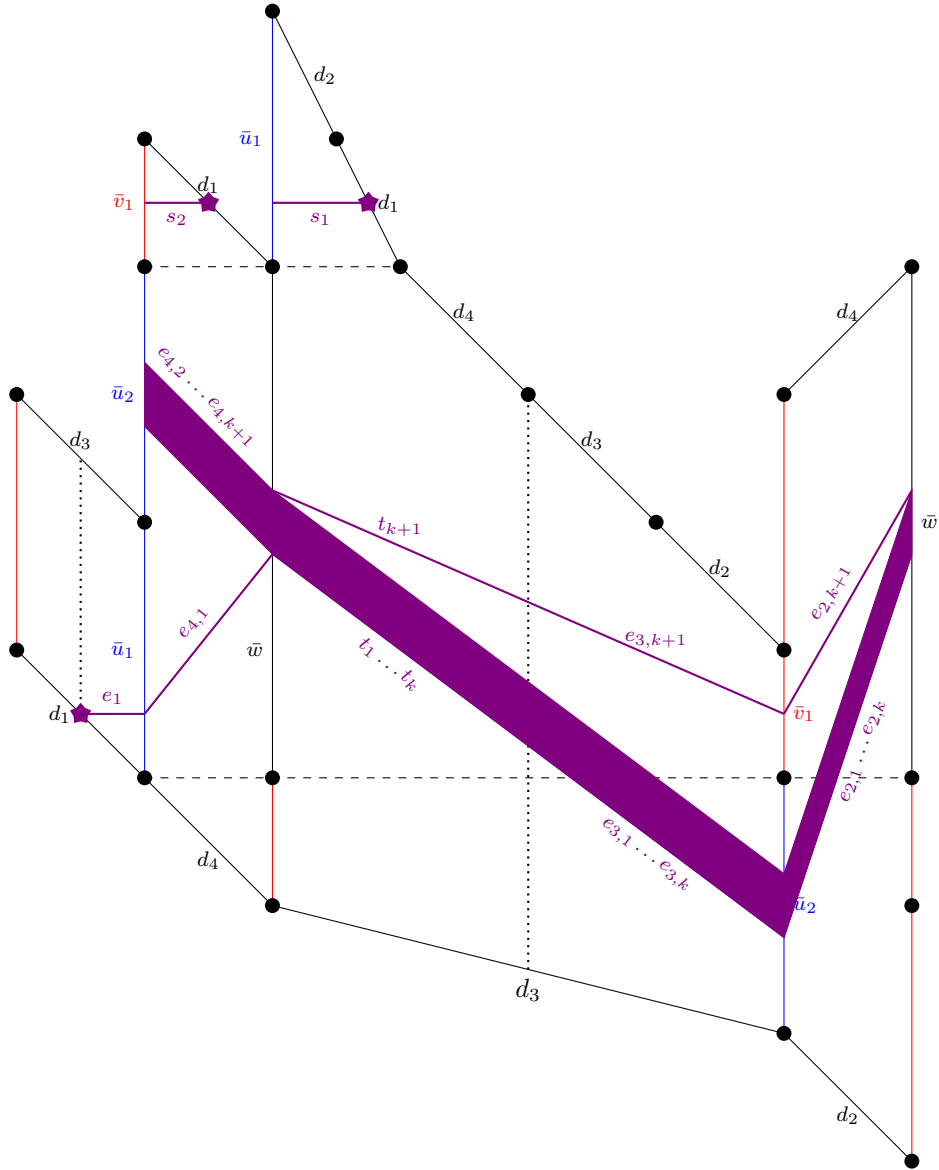

Now we proceed to analyze the sections corresponding to the cohomology classes $\alpha_k = kb^* + (k+1)r^*$ for $k\geq 1$.

\begin{lemma}\label{lem:particular-automorphism-representation}
    Let $k\in \mathbb{N}$ and $\langle s_1,s_2,t_1,\dots,t_{k+1}\rangle\cong F_{k+3}$. The automorphism $\varphi_k$ defined by
    \begin{align*}
        s_1 & \mapsto t_1 \\
        s_2 & \mapsto s_2 t_1 \\
        t_i & \mapsto t_{i+1} \ \text{for $1\leq i \leq k$} \\
        t_{k+1} & \mapsto s_2 s_1 t_1 s_2\inv \\
    \end{align*}
    is in the outer class of automorphisms determined by the cohomology class $\alpha_k = kb^* + (k+1) r^* \in H^1(G;\RR)$, where $G$ is the group defined in \Cref{thm:extended-example}.
\end{lemma}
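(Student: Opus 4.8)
The plan is to read the monodromy off the folded mapping torus $X$ in the style of Dowdall--Kapovich--Leininger. By \Cref{lem:brown-alg-equality} the class $\alpha_k = kb^*+(k+1)r^*$ lies in $\mathcal{P}=C_s$, since its slope $k/(k+1)$ is less than $1$, so it admits a positive cocycle representative $z_k$; one can obtain such a $z_k$ by specializing the family $z_t$ of \Cref{fig:positive-cocycles} to $t=k/(k+1)$ and clearing denominators, or by building it directly on the cell structure of \Cref{fig:folded-mapping-torus,fig:folded-mapping-torus-cells}. Because $\alpha_k$ is primitive integral, $\Theta_k := \eta_{z_k}\inv(y)$ is a connected cross section of the semiflow $\psi$ and its first return map $f_k\colon \Theta_k\to\Theta_k$ is an expanding irreducible train track representative of the monodromy $\varphi_{\alpha_k}$; by definition this is the outer class ``determined by $\alpha_k$.'' So it suffices to compute some train track representative of $\varphi_{\alpha_k}$ and identify it with the displayed map. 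First I would locate $\Theta_k$ inside $X$: inside each trapezoidal $2$-cell it is a family of flow-transverse parallel arcs, the number in the cell over a given skew $1$-cell controlled by the value of $z_k$ there, and the resulting graph is exactly $\Theta_k$ of \Cref{fig:possible-lone-axis-sections} --- with edges $e_1$, $e_{2,j},e_{3,j},e_{4,j}$ for $1\le j\le k+1$, $t_j$ for $1\le j\le k+1$, and $s_1,s_2$ --- after subdividing at the points where $\Theta_k$ meets the dotted vertical segments so that $f_k$ becomes a genuine graph map.

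Next I would pin down $\pi_1(\Theta_k)$. A count of cells gives $\Theta_k$ exactly $4k+7$ edges and Euler characteristic $-(k+2)$, so $\pi_1(\Theta_k)$ is free of rank $k+3$; and the subgraph $T\subseteq\Theta_k$ consisting of $e_1$ together with all of the $e_{2,j},e_{3,j},e_{4,j}$ --- that is $1+3(k+1)=3k+4$ edges --- is, as one reads directly off \Cref{fig:possible-lone-axis-sections}, connected and acyclic, hence a spanning tree. Collapsing $T$ therefore identifies $\pi_1(\Theta_k,\ast)$ with the free group on the remaining $k+3$ edges $s_1,s_2,t_1,\dots,t_{k+1}$, oriented as in the figure, and this is the identification $F_{k+3}\cong\pi_1(\Theta_k)$ implicit in the statement. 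The heart of the argument is then to compute $f_k$ as a graph map and transport it across this identification: flow each edge of $\Theta_k$ forward through $X$ one full period of $\eta_{z_k}$, record the edge-path of $\Theta_k$ it sweeps out --- using that each $2$-cell carries its bottom skew $1$-cell across the edge-path that forms its top --- then delete the $T$-edges from those words and tighten. Comparing the resulting automorphism of $F_{k+3}$ with $s_1\mapsto t_1$, $s_2\mapsto s_2t_1$, $t_i\mapsto t_{i+1}$ for $i\le k$, and $t_{k+1}\mapsto s_2s_1t_1s_2\inv$ finishes the proof.

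The main obstacle is this last bookkeeping step: tracking every parallel family of arcs through the trapezoids without error, in particular getting the image of the long edge $t_{k+1}$ --- the one that wraps through the tall columns of $X$ and should acquire the relator-like word $s_2s_1t_1s_2\inv$, compare $c\mapsto b\inv ab$ for the original $\varphi$ --- and checking that the $t_j$ indeed just cyclically shift. Orientations, the choice of basepoint, and the fact that collapsing $T$ alters the naive edge-path images all have to be handled with care. As a consistency check one can verify independently that $\varphi_k$ is genuinely an automorphism (its abelianization has determinant $\pm 1$) and that its mapping torus group $F_{k+3}\rtimes_{\varphi_k}\ZZ$ is isomorphic to $G$ in a way that matches $\alpha_k$: after Tietze moves eliminating $s_1$ and the $t_j$ in favor of $s_2$ and the stable letter, one should recover the two-generator one-relator presentation of $G$ from \Cref{lem:brown-alg-equality}, with the stable letter corresponding to the primitive element $b\inv r$ on which $\alpha_k$ takes the value $1$. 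This algebraic route could in fact be developed into a self-contained alternative proof, at the cost of a rather unmotivated chain of Tietze transformations.
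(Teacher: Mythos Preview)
Your approach is essentially identical to the paper's: build the cross section $\Theta_k$ inside $X$ from a positive cocycle in the family of \Cref{fig:positive-cocycles}, take the same spanning tree $T$ (the edges $e_1,e_{2,j},e_{3,j},e_{4,j}$), compute the first return map $f_k$ edge by edge, and then read off the induced automorphism on the free basis $s_1,s_2,t_1,\dots,t_{k+1}$. The paper carries out exactly the bookkeeping you flag as the main obstacle, tabulating $f_k$ on every edge and then composing with an explicit marking $\iota$; the one point where your sketch ``delete the $T$-edges and tighten'' would need care is that the tree path $p_{k+1}$ to the start of $t_{k+1}$ does \emph{not} map into $T$ under $f_k$ (it picks up an $s_2$ from the edge $e_{4,k+1}$), which is precisely what produces the conjugating $s_2$ in $t_{k+1}\mapsto s_2 s_1 t_1 s_2^{-1}$ --- you anticipate this in your caveat about collapsing $T$ altering naive edge-path images, and the paper handles it by computing $\kappa f_k\iota$ rather than $\kappa f_k$ alone.
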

\begin{proof}
    Using multiples of the positive cocycles shown in \Cref{fig:positive-cocycles}, we construct section graphs $\Theta_k$ which we depict in \Cref{fig:possible-lone-axis-sections}. \Cref{fig:theta-k-abstract-graph} shows $\Theta_k$ as an abstract graph. We pick the basepoint of $\Theta_k$ to be the intersection point of $\Theta_k$ with $d_1$, and choose a maximal tree $T_k\subset \Theta_k$ consisting of the edges $e_1,e_{2,i},e_{3,i},e_{4,i}$ for $1\leq i \leq k+1$. Let $\kappa:\Theta_k\to R_{k+3}$ be the map collapsing the maximal tree; this map is a homotopy inverse to the marking $\iota: R_{k+3}\to \Theta_k$ we will describe later. In this way, the generators of $\pi_1(\Theta_k)\cong F_{k+3}$ correspond to the $k+3$ edges of $\Theta_k$ not in the maximal tree: $s_1,s_2, \text{ and } t_i$ for $1\leq i \leq k+1$. We can determine the automorphism by analyzing the first return map of this section.

    To understand the first return map, we analyze \Cref{fig:possible-lone-axis-sections}. For the edges $t_1,\dots, t_k$, the semiflow brings the edge back to the graph within the same 2-cell of $X$, so we can see that $t_i\mapsto t_{i+1}$ for $1\leq i\leq k$, and most of the edges have similar behavior. For some edges, we have to analyze the identifications of $X$ more closely. For instance the edge $e_{2,k+1}$ flows onto the 1-cell $d_4$ then further flows to the edge $e_{4,1}$ of $\Theta_k$. The edge $s_2$ flows onto the `upper half' of $d_1$, which then flows to the upper half of $d_3$, then onto the edge $t_1$. The full edge map is summarized in the following table.
    \begin{center}
    \begin{tabular}{|l|l|l|}
        \hline 
        $e\in E(\Theta_k)$ & $f_k(e)$ & $\kappa(f_k(e))$ \\
        \hline 
        $e_1$ & $e_{3,1}$ &  \\
        $e_{j,i},2\leq j\leq 4,1\leq i\leq k$ & $e_{j,i+1}$ &  \\
        $e_{2,k+1}$ & $\bar e_{4,1}$ &  \\
        $e_{3,k+1}$ & $t_1 e_{3,1} e_{2,1}$ & $t_1$ \\
        $e_{4,k+1}$ & $s_2 e_1$ & $s_2$ \\
        $s_1$ & $e_{2,1} t_1$ & $t_1$ \\
        $s_2$ & $t_1$ & $t_1$ \\
        $t_i, 1\leq i\leq k$ & $t_{i+1}$ & $t_{i+1}$ \\
        $t_{k+1}$ & $s_1e_1 e_{4,1}$ & $s_1$ \\
        \hline
    \end{tabular}
    \end{center}

    To obtain the automorphism, we need to understand the marking $\iota$ we have implicitly defined with our choice of basepoint and maximal tree. To begin, note that the loop $e_1 s_1$ represents the element $s_1\in F_{k+3}$. We claim that the loop $e_1 e_{4,1} t_1 e_{3,1} e_{2,1} \bar e_{4,1} \bar e_1$ represents the element $t_1$. To represent the loops which are `higher' in $X$, we have to describe certain `upward paths' through the maximal tree.

    Note that the vertical 1-cell $\bar{w}$ of \Cref{fig:possible-lone-axis-sections} intersects $\Theta_k$ at $k+1$ points, which we will call $w_1,\dots,w_{k+1}$, ordered from bottom to top with respect to the semiflow. For any $1\leq i\leq k$, the edge path $\bar e_{2,i} e_{4,i+1}$ begins at $w_i$ and ends at $w_{i+1}$, for $i=1,k$, these can be seen connecting distinct black vertices in \Cref{fig:theta-k-abstract-graph}. We define edge paths $p_1 = e_1 e_{4,1}$ and for $2\leq i\leq k+1$, $p_i = (e_1 e_{4,1}) (\bar e_{2,1} e_{4,2} \dots \bar e_{2,i-1} e_{4,i})$. The path $p_i$ is the path through the maximal tree beginning at the basepoint and ending at $w_{i}$. We use this notation to describe the marking.

    \begin{center}
    \begin{tabular}{|l|l|}
        \hline
        $a\in F_{k+3}$ & $\iota(a)$ \\
        \hline
        $s_1$ & $e_1 s_1$ \\
        $s_2$ & $p_{k+1} \bar e_{2,k+1} s_2$  \\
        $t_i$ & $p_{i} t_i e_{3,i} e_{2,i} \bar p_{i}$ \\
        \hline
    \end{tabular}
    \end{center}

    We can now use the data of the above two tables to obtain the automorphism $\varphi_k = \kappa f_k \iota$. We note that $f_k(p_i)$ lies within the tree $T_k$ for all $1\leq i \leq k$, and $f_k(p_{k+1})$ crosses only the edge $s_2$, which comes from the $e_{4,k+1}$ edge at the very end of the edge path $p_{k+1}$. That is, $\kappa(f_k(p_{k+1})) = s_2$. The automorphism $\varphi_k$ factored as $(\kappa f_k)(\iota)$ is shown below:

    \begin{align*}
        s_1 & \xrightarrow{\iota} e_1s_1 \xrightarrow{\kappa f_k} t_1 \\
        s_2 & \xrightarrow{\iota} p_{k+1} \bar e_{2,k+1} s_2 \xrightarrow{\kappa f_k} s_2 t_1 \\
        t_i & \xrightarrow{\iota} p_i t_i e_{3,i} e_{2,i} \bar p_i \xrightarrow{\kappa f_k} t_{i+1} \\
        t_{k+1} & \xrightarrow{\iota} p_{k+1} t_{k+1} e_{3,k+1} e_{2,k+1} \bar p_{k+1} \xrightarrow{\kappa f_k} s_2s_1t_1 s_2\inv. \qedhere
    \end{align*}
\end{proof}

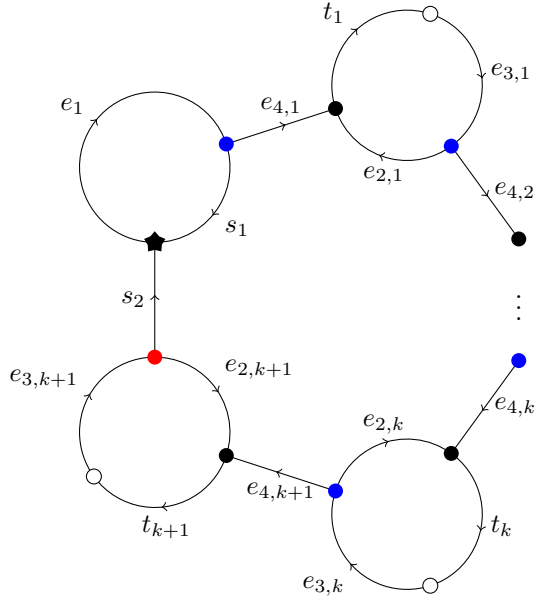
\begin{figure}
    \centering

    \begin{tikzpicture}[
        scale=1,
        opendot/.style={
        circle,
        fill=white,
        draw,
        outer sep=0pt,
        inner sep=2pt
        },
        dot/.style={
        circle,
        fill,
        inner sep=2pt,
        },
        backwardarrow/.style={
        decoration={markings,mark=at position 0.5 with {\arrow{<}}},
        postaction={decorate}
        },
        forwardarrow/.style={
        decoration={markings,mark=at position 0.5 with {\arrow{>}}},
        postaction={decorate}
        }
    ]
        \draw[backwardarrow] ({3*cos(72)+cos(-54)},{3*sin(72)+sin(-54)}) arc [start angle = -54, end angle = 72, radius = 1] node[midway,right]{$e_{3,1}$};
        \draw[backwardarrow] ({3*cos(72)+cos(72)},{3*sin(72)+sin(72)}) arc [start angle = 72, end angle = 198, radius = 1] node[midway,above left]{$t_1$};
        \draw[backwardarrow] ({3*cos(72)+cos(198)},{3*sin(72)+sin(198)}) arc [start angle = 198, end angle = 306, radius = 1] node[midway,below]{$e_{2,1}$};
        
        \draw[backwardarrow] ({3*cos(2*72)+cos(18)}, {3*sin(2*72)+sin(18)}) arc [start angle = 18, end angle = 270,radius = 1cm] node[midway,above left]{$e_1$};
        \draw[backwardarrow] ({3*cos(2*72)}, {3*sin(2*72)-1}) arc [start angle = -90, end angle = 18, radius = 1cm] node[midway,below right]{$s_1$};
        
        \draw[backwardarrow] ({3*cos(3*72)},{3*sin(3*72)+1}) arc [start angle = 90, end angle = 216,radius=1cm] node[midway,above left]{$e_{3,k+1}$};
        \draw[backwardarrow] ({4*cos(216)},{4*sin(216)}) arc[start angle = 216, end angle = 342,radius=1cm] node[midway, below]{$t_{k+1}$};
        \draw[backwardarrow] ({3*cos(3*72)+cos(-18)},{3*sin(3*72)+sin(-18)}) arc[start angle = -18, end angle = 90,radius = 1cm] node[midway, above right]{$e_{2,k+1}$};
        
        \draw[backwardarrow] ({3*cos(4*72)+cos(162)},{3*sin(4*72)+sin(162)}) arc[start angle = 162, end angle = 288, radius = 1cm] node[midway, below left]{$e_{3,k}$};
        \draw[backwardarrow] ({4*cos(288)},{4*sin(288)}) arc[start angle = -72, end angle = 54, radius = 1cm] node[midway,right]{$t_k$};
        \draw[backwardarrow] ({3*cos(288)+cos(54)},{3*sin(288)+sin(54)}) arc[start angle = 54, end angle = 162, radius = 1cm] node[midway, above]{$e_{2,k}$};

        \draw[backwardarrow] ({3+cos(126)},{sin(126)}) -- ({3*cos(72)+cos(-54)},{3*sin(72)+sin(-54)}) node[midway,right]{$e_{4,2}$};
        \draw[backwardarrow] ({3*cos(72)+cos(-162)},{3*sin(72)+sin(-162)}) -- ({3*cos(2*72)+cos(18)},{3*sin(2*72)+sin(18)}) node[midway,above]{$e_{4,1}$};
        \draw[backwardarrow] ({3*cos(2*72)},{3*sin(2*72)-1}) -- ({3*cos(3*72)},{3*sin(3*72)+1}){} node[midway,left]{$s_2$};
        \draw[backwardarrow] ({3*cos(3*72)+cos(-18)},{3*sin(3*72)+sin(-18)}){} -- ({3*cos(4*72)+cos(162)},{3*sin(4*72)+sin(162)}){} node[midway,below]{$e_{4,k+1}$};
        \draw[backwardarrow] ({3*cos(4*72)+cos(54)},{3*sin(4*72)+sin(54)}){} -- ({3+cos(234)},{sin(234)}){} node[midway,right]{$e_{4,k}$};

        \node[dot] at ({3+cos(126)},{sin(126)}){};
        \node[blue,dot] at ({3+cos(234)},{sin(234)}){};
        \path ({3+cos(126)},{sin(126)}) -- node[auto=false]{\vdots} ({3+cos(234)},{sin(234)});

        \node[blue,dot] at ({3*cos(72)+cos(-54)},{3*sin(72)+sin(-54)}){};
        \node[dot] at ({3*cos(72)+cos(-162)},{3*sin(72)+sin(-162)}){};
        \node[opendot] at ({4*cos(72)},{4*sin(72)}){};

        \node[blue,dot] at ({3*cos(2*72)+cos(18)},{3*sin(2*72)+sin(18)}){};
        \node[star,star points=5,fill,inner sep=2pt] at ({3*cos(2*72)},{3*sin(2*72)-1}){};

        \node[red,dot] at ({3*cos(3*72)},{3*sin(3*72)+1}){};
        \node[dot] at ({3*cos(3*72)+cos(-18)},{3*sin(3*72)+sin(-18)}){};
        \node[opendot] at ({4*cos(3*72)},{4*sin(3*72)}){};

        \node[blue,dot] at ({3*cos(4*72)+cos(162)},{3*sin(4*72)+sin(162)}){};
        \node[dot] at ({3*cos(4*72)+cos(54)},{3*sin(4*72)+sin(54)}){};
        \node[opendot] at ({4*cos(4*72)},{4*sin(4*72)}){};

    \end{tikzpicture}
    \caption{
        The section $\Theta_k$ of \Cref{fig:possible-lone-axis-sections} and \Cref{lem:particular-automorphism-representation} depicted as an abstract graph. The maximal tree chosen in \Cref{lem:particular-automorphism-representation} consists of all `$e$' edges, letting the generators of $\pi_1(\Theta_k)$ be $s_1, s_2, t_i$ for $1\leq i \leq k+1$. The colorings of the vertices match the coloring of the intersecting 1-cell in \Cref{fig:possible-lone-axis-sections}, the star is the intersection with the skew 1-cell and is the chosen basepoint. 
    }
    \label{fig:theta-k-abstract-graph}
\end{figure}

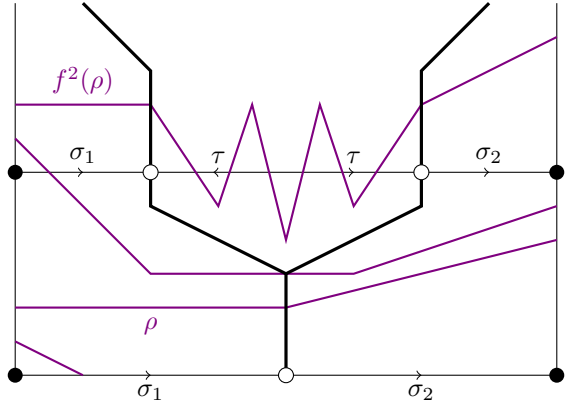
\begin{figure}
    \centering
    \begin{tikzpicture}[
    scale=0.9,
    dot/.style={
        circle,
        fill,
        inner sep=2pt,
        },
        opendot/.style={
        circle,
        fill=white,
        draw,
        outer sep=0pt,
        inner sep=2pt
        },
        backwardarrow/.style={
        decoration={markings,mark=at position 0.5 with {\arrow{<}}},
        postaction={decorate}
        },
        forwardarrow/.style={
        decoration={markings,mark=at position 0.5 with {\arrow{>}}},
        postaction={decorate}
        }
    ]
        \draw[forwardarrow] (-4,0) -- (0,0) node[midway,below]{$\sigma_1$};
        \draw[forwardarrow] (0,0) -- (4,0) node[midway,below]{$\sigma_2$};

        \draw (-4,0) -- (-4,5.5);
        \draw (4,0) -- (4,5.5);
        
        \draw[forwardarrow] (-4,3) -- (-2,3) node[midway,above]{$\sigma_1$};
        \draw[backwardarrow] (-2,3) -- (0,3) node[midway,above]{$\tau$};
        \draw[forwardarrow] (0,3) -- (2,3) node[midway,above]{$\tau$};
        \draw[forwardarrow] (2,3) -- (4,3) node[midway,above]{$\sigma_2$};

        \draw[violet,thick] (-4,1) -- (0,1)node[pos=1/2,below]{$\rho$} -- (4,2);

        \draw[violet,thick] (-4,4) -- (-2,4) node[midway,above]{$f^2(\rho)$} -- (-1,2.5) -- (-0.5,4) -- (0,2) -- (0.5,4) -- (1,2.5) -- (2,4) -- (4,5);

        \draw[violet,thick] (-4,3.5) -- (-2,1.5) -- (1,1.5) -- (4,2.5);
        \draw[violet,thick] (-4,0.5) -- (-3,0);

        \draw[very thick] (0,0) -- (0,1.5);
        \draw[very thick] (0,1.5) -- (-2,2.5) -- (-2,4.5) -- (-3,5.5);
        \draw[very thick] (0,1.5) -- (2,2.5) -- (2,4.5) -- (3,5.5);

        \node[dot] at (-4,0){};
        \node[dot] at (4,0){};
        \node[opendot] at (0,0){};

        \node[dot] at (-4,3){};
        \node[dot] at (4,3){};
        \node[opendot] at (-2,3){};
        \node[opendot] at (2,3){};        
        
    \end{tikzpicture}
    \caption{The domain of the map $F:[-1,1]\times[0,\infty)\to X$ defined in the proof of \Cref{prop:pnp-invariance}. The preimage $F\inv(\Gamma)$ is in purple, and the thick black segments are preimages of selected 1-cells of $X$. For $(s,t)$ above the thick black segments, we have that $F(-s,t) = F(s,t)$.}
    \label{fig:pnp-invariance}
\end{figure}

We wish to analyze the ideal Whitehead graphs of the monodromies $\varphi_k$ produced in the above lemma. To do so with the description of the ideal Whitehead graph we provided, we must guarantee that none of the cross sections $\Theta_\alpha$ admit a periodic Nielsen path. Roughly, flowing a periodic Nielsen path of $\Theta_\alpha$ forward in $X$ yields a periodic Nielsen path in $\Gamma$ and vice versa. We will use a reparametrization $\psi^\alpha$ of $\psi$ that is the first return map of $\Theta_\alpha$ at time one, see \cite{DKL1,DKL2}. We note that the following proposition was suggested by Dowdall--Kapovich--Leininger, but does not appear in their work.

\begin{proposition} \label{prop:pnp-invariance}
    Let $f:\Gamma\to\Gamma$ be an expanding irreducible train track map and $X$ a folded mapping torus of $f$. Let $\alpha\in \mathcal{P}$ be dual to a cross section $\Theta_\alpha$ with first return map $f_\alpha:\Theta_\alpha\to\Theta_\alpha$. Then, $f$ admits a periodic indivisible Nielsen path if and only if $f_\alpha$ admits a periodic indivisible Nielsen path.
\end{proposition}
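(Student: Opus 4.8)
The plan is to interpret a periodic indivisible Nielsen path as the boundary data of an immersed ``flow band'' inside $X$, and then to transfer it between the two cross-sections $\Gamma$ and $\Theta_\alpha$ by slicing the band with each of them. Suppose first that $f$ admits such a path $\rho$, with $f^n(\rho)$ homotopic to $\rho$ rel endpoints; after replacing $n$ by a multiple and subdividing $\Gamma$ we may take the endpoints $p$ and $q$ of $\rho$ to be vertices fixed by $f^n$, so that their forward $\psi$-orbits $O_p, O_q\subseteq X$ are closed loops. In the unfolded mapping torus $M_f$ the flow cylinder over $\rho$ between heights $0$ and $n$ is an immersed $[-1,1]\times[0,n]\to M_f$ with bottom $\rho\subseteq\Gamma$ and top the untightened path $f^n(\rho)\subseteq\Gamma$; passing to the quotient $M_f\to X$, the folds performed over $[0,n]$ carry out exactly the tightening of $f^n(\rho)$ (this is how the factorization $f = h q_k\cdots q_1\pi$ and its continuous analogue in $X$ realize cancellation), so in $X$ the top of the cylinder becomes the tightened path, which is $\rho$ again because $\rho$ is Nielsen. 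Hence the flow cylinder closes up in the flow direction and, extending by $\psi_n$-equivariance, defines a continuous $F\colon [-1,1]\times[0,\infty)\to X$ as in \Cref{fig:pnp-invariance}: for fixed $s$ the curve $t\mapsto F(s,t)$ is a forward flow line, $F(\{-1\}\times[0,\infty)) = O_p$ and $F(\{1\}\times[0,\infty)) = O_q$, and the non-injectivity $F(-s,t) = F(s,t)$ along the preimages of skew $1$-cells records the cancellation.

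For the forward direction, consider $F\inv(\Theta_\alpha)$. Because $\Theta_\alpha$ is a cross-section transverse to $\psi$ and the second coordinate of $F$ is the flow direction, this preimage is a disjoint union of properly embedded arcs, each running from the left side $\{-1\}\times[0,\infty)$ to the right side $\{1\}\times[0,\infty)$; choose such an arc $\delta$ at height large enough to avoid the bottom of the band. Then $\rho_\alpha := F|_\delta$ is a path in $\Theta_\alpha$ from a point of $O_p\cap\Theta_\alpha$ to a point of $O_q\cap\Theta_\alpha$, and both of those intersection points are periodic for the first-return map $f_\alpha$ (it permutes the finite sets $O_p\cap\Theta_\alpha$ and $O_q\cap\Theta_\alpha$). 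After subdividing $\Theta_\alpha$ so that they are vertices, $\rho_\alpha$ is an edge path, and it is nondegenerate after tightening, since the sub-band of $F$ below $\delta$ would otherwise force $\rho$ itself to tighten to a point. Sliding $\delta$ upward by one $\psi^\alpha$-unit (one return time to $\Theta_\alpha$) realizes $f_\alpha$ on $\rho_\alpha$ up to homotopy rel endpoints inside the band, and combining this with the $\psi_n$-periodicity of $F$ gives $f_\alpha^m(\rho_\alpha)\simeq\rho_\alpha$ rel endpoints for a suitable $m$; thus $\rho_\alpha$ is a periodic Nielsen path for $f_\alpha$. The converse is symmetric: $\Gamma = \Gamma\times\{0\}$ is itself a cross-section of $\psi$, dual to $r^*\in\mathcal{P}$, so the same construction applied to a periodic Nielsen path of $f_\alpha$ produces a flow band in $X$ whose slice by $\Gamma$ is a periodic Nielsen path of $f$.

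It remains to handle indivisibility. It is enough to prove the equivalence ``$f$ admits a periodic Nielsen path $\iff f_\alpha$ admits a periodic Nielsen path'', since any periodic Nielsen path is a finite concatenation of periodic indivisible Nielsen paths --- the decomposition terminates because the path has finitely many edges, and there are only finitely many periodic indivisible Nielsen paths to begin with by \cite[Lemmas 4.2.5-6]{BFH00}. Therefore $f$ has a periodic indivisible Nielsen path iff it has a periodic Nielsen path iff $f_\alpha$ has one iff $f_\alpha$ has a periodic indivisible Nielsen path. Alternatively, one can transport indivisibility directly: a concatenation of $f_\alpha$-Nielsen paths realizing $\rho_\alpha$ would, under the reverse construction applied factorwise, exhibit a nontrivial splitting of $\rho$.

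The main obstacle is making the band $F$ precise and proving that its fold locus --- the set where $F(-s,t) = F(s,t)$ --- is exactly the preimage of the skew $1$-cells of $X$; equivalently, that ``$f^n(\rho)$ tightens to $\rho$'' translates into ``the flow lines issuing from interior points of $\rho$ merge along skew $1$-cells and nowhere else.'' One must also manage some bookkeeping: the possibility that $p$ and $q$ coincide or lie on a common periodic orbit, the verification that $\rho_\alpha$ is nondegenerate and that the needed subdivisions of $\Gamma$ and $\Theta_\alpha$ can be chosen compatibly with their train-track structures, and the relationship (via \Cref{two-skew-intersections-illegal}) between the illegal turn carried by $\rho_\alpha$ and the skew $1$-cell that the fold of the band crosses.
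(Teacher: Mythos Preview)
Your overall strategy---build a flow band $F:[-1,1]\times[0,\infty)\to X$ over the Nielsen path and slice it by the other cross-section---is exactly the paper's. But there is a genuine error in your execution: the claim that passing to the quotient $M_f\to X$ ``carries out exactly the tightening of $f^n(\rho)$, so in $X$ the top of the cylinder becomes the tightened path, which is $\rho$ again'' is false. The semiflow on $X$ satisfies $\psi_n|_\Gamma = f^n$ pointwise, so $s\mapsto F(s,n) = f^n(\rho(s))$ traces out the \emph{untightened} image of $\rho$, not $\rho$ itself. The folds do create identifications \emph{inside} the band (the locus where $F(-s,t)=F(s,t)$, as in \Cref{fig:pnp-invariance}), but the band is not $n$-periodic in $t$; only the two boundary flow lines $F(\pm 1,\cdot)$ are periodic. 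Your subsequent appeal to ``the $\psi_n$-periodicity of $F$'' to deduce $f_\alpha^m(\rho_\alpha)\simeq\rho_\alpha$ rel endpoints therefore has no foundation.

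The paper avoids any periodicity claim. It uses the Bestvina--Handel decomposition $\sigma=\sigma_1\sigma_2$ of an indivisible periodic Nielsen path (with $f_\alpha^n(\sigma_1)=\sigma_1\bar\tau$ and $f_\alpha^n(\sigma_2)=\tau\sigma_2$) to fix a specific parametrization of $\sigma$; this lets one compute directly that an arc $\hat\rho_1\subset F^{-1}(\Gamma)$ sitting at height roughly $n$ above a given arc $\hat\rho$ satisfies $F\circ\hat\rho_1|_{[-1,-1/2]\cup[1/2,1]}=\rho$ (reparametrized), while $F\circ\hat\rho_1|_{[-1/2,1/2]}$ is a loop $\gamma$ explicitly homotopic in $X$ to the backtrack $\bar\tau\tau$. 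The step you are missing is then an appeal to the $\pi_1$-injectivity of $\Gamma\hookrightarrow X$ (and, for the converse direction, of $\Theta_\alpha\hookrightarrow X$ via \cite[Theorem~A]{DKL1}) to promote this nullhomotopy in $X$ to one in the graph, yielding $f^m(\rho)\simeq\rho$ rel endpoints in $\Gamma$. Your ``homotopy rel endpoints inside the band'' only lives in $X$, and without $\pi_1$-injectivity you cannot conclude anything in $\Theta_\alpha$. Your handling of indivisibility by reduction to arbitrary periodic Nielsen paths is fine, though the paper instead exploits the indivisible structure directly via the $\sigma_1\sigma_2$ decomposition.
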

\begin{proof}
    Let $\sigma: [-1,1]\to\Theta_\alpha$ be a periodic indivisible Nielsen path, i.e. $f_\alpha^n(\sigma)$ is homotopic to $\sigma$ rel endpoints. Note that we can write $\psi^\alpha_n(\sigma) = f^n_\alpha(\sigma)$. By \cite[Lemma 3.4]{BH92}, $\sigma$ can be written as a concatenation of legal paths $\sigma_1\sigma_2$ such that $f^n_\alpha(\sigma_1) = \sigma_1\bar\tau$ and $f^n_\alpha(\sigma_2) = \tau \sigma_2$ for some path $\tau$. Let $\tau$ have parametrization $\tau:[0,1]\to \Theta_\alpha$. We parametrize $\sigma$ so that it restricts to $\sigma_1$ on $[-1,0]$ and $\sigma_2$ on $[0,1]$ and additionally require
    \[
        f_\alpha^n(\sigma(s)) = \begin{cases}
            \sigma(2s+1) & s\in [-1,-\frac{1}{2}] \\
            \tau(2|s|) & s\in [-\frac{1}{2},\frac{1}{2}] \\
            \sigma(2s-1) & s\in [\frac{1}{2},1].
        \end{cases}
    \]
    We define a map $F: [-1,1]\times [0,\infty)\to X$ by $F(s,t) = \psi^\alpha_t(\sigma(s))$. Note that $F(-1,t)$ and $F(1,t)$ are periodic orbits of $\psi$ since $\sigma$ is a periodic Nielsen path. Consider $F\inv(\Gamma)\subset [-1,1]\times [0,\infty)$. Since $\Gamma$ is transverse to $\psi_\alpha$, $F\inv(\Gamma)$ is a disjoint union of arcs. Finitely many of these arcs intersect $[-1,1]\times \{0\}$, the rest connect points of $\{\pm 1\}\times [0,\infty)$, see \Cref{fig:pnp-invariance}.

    Let $\hat\rho\subseteq F\inv(\Gamma)$ be an arc connecting $(-1,t^-)$ and $(1,t^+)$, parametrized so that $\hat\rho(s) = (s,h(s))$, where $h: [-1,1]\to\RR_+$ is a positive continuous function. Such a choice of parametrization can be made since $\Gamma$ is transverse to $\psi^\alpha$. Then $\rho(s) = F(\hat\rho(s)) = F(s,h(s))$ parametrizes an arc in $\Gamma$ connecting $v^- = F(-1,t^-)$ and $v^+ = F(1,t^+)$. We will show that $\rho$ is a periodic Nielsen path of $\Gamma$. 

    For $s\in[-1,-1/2]$, by our choice of parametrization of $\sigma$,
    \begin{align*}
        F(s,n+h(2s+1)) & = \psi^\alpha_{n+h(2s+1)} (\sigma(s)) \\
        & = \psi^\alpha_{h(2s+1)} f^n_\alpha(\sigma(s)) \\
        & = \psi^\alpha_{h(2s+1)} (\sigma(2s+1)) \\
        & = F(2s+1,h(2s+1)) \\
        & = F(\hat\rho(2s+1))\in\Gamma.
    \end{align*}
    A similar calculation shows that for $s\in [1/2,1]$, $F(s,n+h(2s-1)) = F(\hat\rho(2s-1))$. In particular, $(-1,n+h(-1))$ and $(1,n+h(1))$ are in $F\inv(\Gamma)$. There is an arc $\hat\rho_1\subset F\inv (\Gamma)$ connecting these points, which has a parametrization $\hat\rho_1(s) = (s,h_1(s))$ for some positive continuous function $h_1$. We have that
    \[
        F(\hat\rho_1(-\frac{1}{2})) = F(-\frac{1}{2},n+h(0)) = F(\hat\rho(0)) = F(\frac{1}{2},n+h(0)) =  F(\hat\rho_1(\frac{1}{2})).
    \]
    Thus, the path $\gamma: [-1/2,1/2]\to\Gamma$ defined by $\gamma(s) = F(s,h_1(s))$ is a loop. We show that $\gamma$ is nullhomotopic in $X$. Define a (free) homotopy $H: [-1/2,1/2]\times [0,1]\to X$ by $H(s,t) = F(s,(1-t)h_1(s)+nt))$, note that $H(-1/2,t) = H(1/2,t)$ for all $t\in[0,1]$. Then,
    \[
        H(s,0) = F(s,h_1(s)) = \gamma(s) \quad \text{and} \quad H(s,1) = F(s,n) = \tau(2|s|).
    \]
    Thus, $\gamma$ is freely homotopic in $X$ to $\bar\tau\tau$, so $\gamma$ is nullhomotopic in $X$. The inclusion of $\Gamma$ into $X$ is $\pi_1$-injective, so $\gamma$ is nullhomotopic in $\Gamma$. Then $F(\hat\rho_1)$ is homotopic to $\rho$. On the other hand, $F(\hat\rho)$ is a forward flow of $\rho$, so $F(\hat\rho) = f^m(\rho)$ for some $m$.

    The converse direction is essentially the same, reversing the roles of $\rho$ and $\sigma$, using the `standard' parametrization of $\psi$ instead of $\psi^\alpha$, and appealing to Theorem A of \cite{DKL1} for the $\pi_1$-injectivity of the inclusion $\Theta_\alpha\to X$.
\end{proof}
    
\begin{proof}[Proof of \Cref{thm:extended-example}]
    We will show that the automorphisms $\varphi_k$ satisfy the lone axis criteria of \Cref{thm:mp16-lone-axis-criteria}. We analyze the train track map $f_k: \Theta_k\to \Theta_k$.

    As noted in \Cref{ex:lone-axis}, $f: \Gamma\to\Gamma$ admits no periodic Nielsen paths by \cite{coulbois-train-track}. By \Cref{prop:pnp-invariance}, this implies that $f_k: \Theta_k\to \Theta_k$ admits no periodic Nielsen paths. Thus we may use the description of the ideal Whitehead graph provided in \Cref{sec:axis-bundle} to compute $\mathcal{IW}(\varphi_k)$.

    The periodic vertices of $\Theta_k$ are the $2k+3$ vertices colored black, red, or blue in \Cref{fig:theta-k-abstract-graph}, which are permuted transitively under $f$. Note that the basepoint is not periodic. Starting at the direction $s_1$, we analyze the direction map $Df$:
    \begin{align*}
        s_1 & \mapsto e_{2,1} \mapsto \dots \mapsto e_{2,k+1} \mapsto \bar e_{4,1}\mapsto \dots \mapsto \bar e_{4,k+1} \mapsto \bar e_1 \\
        \bar e_1 & \mapsto \bar e_{3,1} \mapsto \dots \mapsto \bar e_{3,k+1} \mapsto \bar e_{2,1} \mapsto \dots \mapsto \bar e_{2,k+1} \mapsto e_{4,1} \\
        e_{4,1} & \mapsto \dots \mapsto e_{4,k+1} \mapsto s_2 \mapsto t_1\mapsto \dots \mapsto t_{k+1} \mapsto s_1.
    \end{align*}
    Notice that $Df$ transitively permutes all of the directions at periodic vertices and that $Df^{2k+3}$ transitively permutes the three directions at each periodic vertex (for instance, the directions $s_1, \bar e_1, e_{4,1}$). This implies that each periodic vertex is principal. The induced map on turns is then also a transitive permutation. We note that the turn $\{\bar e_1, e_{4,1}\}$ is taken, and so under powers of $f$, every turn between periodic vertices is taken. This implies that the ideal Whitehead graph $\mathcal{IW}(\varphi_k)$ is a disjoint union of $2k+3$ triangles, which do not have cut vertices. Finally, the rotationless index of $\varphi$ is
\[
    i(\varphi) = \sum_{j=1}^{2k+3} (1-\frac{3}{2}) = \frac{3}{2} - (k+3).
\]
An appeal to \Cref{thm:mp16-lone-axis-criteria} concludes the proof.
\end{proof}

\printbibliography

@article {mp16lone-axes,
    AUTHOR = {Mosher, Lee and Pfaff, Catherine},
     TITLE = {Lone axes in outer space},
   JOURNAL = {Algebr. Geom. Topol.},
  FJOURNAL = {Algebraic \& Geometric Topology},
    VOLUME = {16},
      YEAR = {2016},
    NUMBER = {6},
     PAGES = {3385--3418},
      ISSN = {1472-2747,1472-2739},
   MRCLASS = {20E36 (20E05 20F65 57M07)},
  MRNUMBER = {3584262},
MRREVIEWER = {Thomas\ Koberda},
       DOI = {10.2140/agt.2016.16.3385},
       URL = {https://doi.org/10.2140/agt.2016.16.3385},
}

@article {hm11axis_bundle,
    AUTHOR = {Handel, Michael and Mosher, Lee},
     TITLE = {Axes in outer space},
   JOURNAL = {Mem. Amer. Math. Soc.},
  FJOURNAL = {Memoirs of the American Mathematical Society},
    VOLUME = {213},
      YEAR = {2011},
    NUMBER = {1004},
     PAGES = {vi+104},
      ISSN = {0065-9266,1947-6221},
      ISBN = {978-0-8218-6927-7},
   MRCLASS = {20F65 (05E18 57M07)},
  MRNUMBER = {2858636},
MRREVIEWER = {Gilbert\ Levitt},
       DOI = {10.1090/S0065-9266-2011-00620-9},
       URL = {https://doi.org/10.1090/S0065-9266-2011-00620-9},
}

@article {hm07parageom,
    AUTHOR = {Handel, Michael and Mosher, Lee},
     TITLE = {Parageometric outer automorphisms of free groups},
   JOURNAL = {Trans. Amer. Math. Soc.},
  FJOURNAL = {Transactions of the American Mathematical Society},
    VOLUME = {359},
      YEAR = {2007},
    NUMBER = {7},
     PAGES = {3153--3183},
      ISSN = {0002-9947,1088-6850},
   MRCLASS = {20E05 (20E36 20F65)},
  MRNUMBER = {2299450},
MRREVIEWER = {Mihalis\ A.\ Sykiotis},
       DOI = {10.1090/S0002-9947-07-04065-2},
       URL = {https://doi.org/10.1090/S0002-9947-07-04065-2},
}

@article {DKL1,
    AUTHOR = {Dowdall, Spencer and Kapovich, Ilya and Leininger, Christopher
              J.},
     TITLE = {Dynamics on free-by-cyclic groups},
   JOURNAL = {Geom. Topol.},
  FJOURNAL = {Geometry \& Topology},
    VOLUME = {19},
      YEAR = {2015},
    NUMBER = {5},
     PAGES = {2801--2899},
      ISSN = {1465-3060,1364-0380},
   MRCLASS = {20F65 (57M20)},
  MRNUMBER = {3416115},
MRREVIEWER = {Dawid\ Kielak},
       DOI = {10.2140/gt.2015.19.2801},
       URL = {https://doi.org/10.2140/gt.2015.19.2801},
}

@article {DKL2,
    AUTHOR = {Dowdall, Spencer and Kapovich, Ilya and Leininger, Christopher
              J.},
     TITLE = {Mc{M}ullen polynomials and {L}ipschitz flows for
              free-by-cyclic groups},
   JOURNAL = {J. Eur. Math. Soc. (JEMS)},
  FJOURNAL = {Journal of the European Mathematical Society (JEMS)},
    VOLUME = {19},
      YEAR = {2017},
    NUMBER = {11},
     PAGES = {3253--3353},
      ISSN = {1435-9855,1435-9863},
   MRCLASS = {20F65 (37C85 37E99 57M07 57M20)},
  MRNUMBER = {3713041},
MRREVIEWER = {Witold\ Tomaszewski},
       DOI = {10.4171/JEMS/739},
       URL = {https://doi.org/10.4171/JEMS/739},
}

@article {mut21,
    AUTHOR = {Mutanguha, Jean Pierre},
     TITLE = {Irreducibility of a free group endomorphism is a mapping torus
              invariant},
   JOURNAL = {Comment. Math. Helv.},
  FJOURNAL = {Commentarii Mathematici Helvetici. A Journal of the Swiss
              Mathematical Society},
    VOLUME = {96},
      YEAR = {2021},
    NUMBER = {1},
     PAGES = {47--63},
      ISSN = {0010-2571,1420-8946},
   MRCLASS = {20F65 (20E05 20E06 20F67)},
  MRNUMBER = {4228613},
MRREVIEWER = {Sushil\ Bhunia},
       DOI = {10.4171/cmh/506},
       URL = {https://doi.org/10.4171/cmh/506},
}

@article {DKL3,
    AUTHOR = {Dowdall, Spencer and Kapovich, Ilya and Leininger, Christopher
              J.},
     TITLE = {Endomorphisms, train track maps, and fully irreducible
              monodromies},
   JOURNAL = {Groups Geom. Dyn.},
  FJOURNAL = {Groups, Geometry, and Dynamics},
    VOLUME = {11},
      YEAR = {2017},
    NUMBER = {4},
     PAGES = {1179--1200},
      ISSN = {1661-7207,1661-7215},
   MRCLASS = {20F65 (37C85 57M05)},
  MRNUMBER = {3737279},
MRREVIEWER = {Alexander\ Fel\cprime shtyn},
       DOI = {10.4171/GGD/425},
       URL = {https://doi.org/10.4171/GGD/425},
}

@article {BH92,
    AUTHOR = {Bestvina, Mladen and Handel, Michael},
     TITLE = {Train tracks and automorphisms of free groups},
   JOURNAL = {Ann. of Math. (2)},
  FJOURNAL = {Annals of Mathematics. Second Series},
    VOLUME = {135},
      YEAR = {1992},
    NUMBER = {1},
     PAGES = {1--51},
      ISSN = {0003-486X,1939-8980},
   MRCLASS = {20E05 (20F28 20F32 20F34 57M07 57M15)},
  MRNUMBER = {1147956},
MRREVIEWER = {Darryl\ McCullough},
       DOI = {10.2307/2946562},
       URL = {https://doi.org/10.2307/2946562},
}

@article{Brown1987,
author = {Brown, K.S.},
journal = {Invent. math.},
pages = {479-504},
title = {Trees, valuations, and the Bieri-Neumann-Strebel invariant.},
DOI = {10.1007/BF01389176},
url = {https://doi.org/10.1007/BF01389176},
volume = {90},
year = {1987},
}

@article {BNS87,
    AUTHOR = {Bieri, Robert and Neumann, Walter D. and Strebel, Ralph},
     TITLE = {A geometric invariant of discrete groups},
   JOURNAL = {Invent. Math.},
  FJOURNAL = {Inventiones Mathematicae},
    VOLUME = {90},
      YEAR = {1987},
    NUMBER = {3},
     PAGES = {451--477},
      ISSN = {0020-9910,1432-1297},
   MRCLASS = {20J05 (22E40 57R19)},
  MRNUMBER = {914846},
MRREVIEWER = {Peter\ H.\ Kropholler},
       DOI = {10.1007/BF01389175},
       URL = {https://doi.org/10.1007/BF01389175},
}

@incollection {agol2011,
    AUTHOR = {Agol, Ian},
     TITLE = {Ideal triangulations of pseudo-{A}nosov mapping tori},
 BOOKTITLE = {Topology and geometry in dimension three},
    SERIES = {Contemp. Math.},
    VOLUME = {560},
     PAGES = {1--17},
 PUBLISHER = {Amer. Math. Soc., Providence, RI},
      YEAR = {2011},
      ISBN = {978-0-8218-5295-8},
   MRCLASS = {57M50 (37E30)},
  MRNUMBER = {2866919},
MRREVIEWER = {Jack\ O.\ Button},
       DOI = {10.1090/conm/560/11087},
       URL = {https://doi.org/10.1090/conm/560/11087},
}

@article {kap14iwip,
    AUTHOR = {Kapovich, Ilya},
     TITLE = {Algorithmic detectability of iwip automorphisms},
   JOURNAL = {Bull. Lond. Math. Soc.},
  FJOURNAL = {Bulletin of the London Mathematical Society},
    VOLUME = {46},
      YEAR = {2014},
    NUMBER = {2},
     PAGES = {279--290},
      ISSN = {0024-6093,1469-2120},
   MRCLASS = {20F10 (20E05 20E36)},
  MRNUMBER = {3194747},
MRREVIEWER = {Dawid\ Kielak},
       DOI = {10.1112/blms/bdt093},
       URL = {https://doi.org/10.1112/blms/bdt093},
}

@article {AKHR,
    AUTHOR = {Algom-Kfir, Yael and Hironaka, Eriko and Rafi, Kasra},
     TITLE = {Digraphs and cycle polynomials for free-by-cyclic groups},
   JOURNAL = {Geom. Topol.},
  FJOURNAL = {Geometry \& Topology},
    VOLUME = {19},
      YEAR = {2015},
    NUMBER = {2},
     PAGES = {1111--1154},
      ISSN = {1465-3060,1364-0380},
   MRCLASS = {57M07 (20F65 57M20)},
  MRNUMBER = {3336279},
MRREVIEWER = {Xiaolei\ Wu},
       DOI = {10.2140/gt.2015.19.1111},
       URL = {https://doi.org/10.2140/gt.2015.19.1111},
}

@online {coulbois-train-track,
    AUTHOR = {Coulbois, Thierry},
    TITLE = {Train-tracks for SAGE},
    VERSION = {0.1.5},
    URL = {https://github.com/coulbois/sage-train-track}
}

@article {CV86,
    AUTHOR = {Culler, Marc and Vogtmann, Karen},
     TITLE = {Moduli of graphs and automorphisms of free groups},
   JOURNAL = {Invent. Math.},
  FJOURNAL = {Inventiones Mathematicae},
    VOLUME = {84},
      YEAR = {1986},
    NUMBER = {1},
     PAGES = {91--119},
      ISSN = {0020-9910,1432-1297},
   MRCLASS = {20F28 (20J05 57S30)},
  MRNUMBER = {830040},
MRREVIEWER = {Christopher\ W.\ Stark},
       DOI = {10.1007/BF01388734},
       URL = {https://doi.org/10.1007/BF01388734},
}

@article {Vog15,
    AUTHOR = {Vogtmann, Karen},
     TITLE = {On the geometry of outer space},
   JOURNAL = {Bull. Amer. Math. Soc. (N.S.)},
  FJOURNAL = {American Mathematical Society. Bulletin. New Series},
    VOLUME = {52},
      YEAR = {2015},
    NUMBER = {1},
     PAGES = {27--46},
      ISSN = {0273-0979,1088-9485},
   MRCLASS = {20F65 (20E05 20F28)},
  MRNUMBER = {3286480},
MRREVIEWER = {Dawid\ Kielak},
       DOI = {10.1090/S0273-0979-2014-01466-1},
       URL = {https://doi.org/10.1090/S0273-0979-2014-01466-1},
}

@article {FM11lipschitz,
    AUTHOR = {Francaviglia, Stefano and Martino, Armando},
     TITLE = {Metric properties of outer space},
   JOURNAL = {Publ. Mat.},
  FJOURNAL = {Publicacions Matem\`atiques},
    VOLUME = {55},
      YEAR = {2011},
    NUMBER = {2},
     PAGES = {433--473},
      ISSN = {0214-1493,2014-4350},
   MRCLASS = {20F65},
  MRNUMBER = {2839451},
MRREVIEWER = {Matt\ T.\ Clay},
       DOI = {10.5565/PUBLMAT\_55211\_09},
       URL = {https://doi.org/10.5565/PUBLMAT_55211_09},
}

@article {McMullen2000,
    AUTHOR = {McMullen, Curtis T.},
     TITLE = {Polynomial invariants for fibered 3-manifolds and
              {T}eichm\"uller geodesics for foliations},
   JOURNAL = {Ann. Sci. \'Ecole Norm. Sup. (4)},
  FJOURNAL = {Annales Scientifiques de l'\'Ecole Normale Sup\'erieure.
              Quatri\`eme S\'erie},
    VOLUME = {33},
      YEAR = {2000},
    NUMBER = {4},
     PAGES = {519--560},
      ISSN = {0012-9593},
   MRCLASS = {57M50 (30F60 32G15 37C85 37D40 57N10 57R30)},
  MRNUMBER = {1832823},
MRREVIEWER = {Athanase\ Papadopoulos},
       DOI = {10.1016/S0012-9593(00)00121-X},
       URL = {https://doi.org/10.1016/S0012-9593(00)00121-X},
}

@article {guirardel-core,
    AUTHOR = {Guirardel, Vincent},
     TITLE = {C\oe ur et nombre d'intersection pour les actions de groupes
              sur les arbres},
   JOURNAL = {Ann. Sci. \'Ecole Norm. Sup. (4)},
  FJOURNAL = {Annales Scientifiques de l'\'Ecole Normale Sup\'erieure.
              Quatri\`eme S\'erie},
    VOLUME = {38},
      YEAR = {2005},
    NUMBER = {6},
     PAGES = {847--888},
      ISSN = {0012-9593},
   MRCLASS = {20E08 (05C05)},
  MRNUMBER = {2216833},
MRREVIEWER = {Viorel\ Mihai\ Gontineac},
       DOI = {10.1016/j.ansens.2005.11.001},
       URL = {https://doi.org/10.1016/j.ansens.2005.11.001},
}

@article {stallings-fold,
    AUTHOR = {Stallings, John R.},
     TITLE = {Topology of finite graphs},
   JOURNAL = {Inventiones mathematicae},
    VOLUME = {71},
      YEAR = {1983},
    NUMBER = {3},
     PAGES = {551--565},
       DOI = {10.1007/BF02095993},
       URL = {https://doi.org/10.1007/BF02095993},
}

@article{pfaff-tsang-cubist,
      title={A ``cubist" decomposition of the Handel-Mosher axis bundle and the conjugacy problem for $\mathrm{Out}(F_r)$}, 
      author={Catherine Eva Pfaff and Chi Cheuk Tsang},
      year={2025},
      journal={arXiv preprint arXiv:2503.16360},
      eprint={2503.16360},
      archivePrefix={arXiv},
      primaryClass={math.GR},
      url={https://arxiv.org/abs/2503.16360}, 
}

@phdthesis{white-thesis,
    author={White,Tad P.},
    year={1991},
    title={The geometry of the outer space},
    school={UCLA},
    journal={ProQuest Dissertations and Theses},
    isbn={979-8-208-64375-4},
    url={http://ezproxy.rice.edu/login?url=https://www.proquest.com/dissertations-theses/geometry-outer-space/docview/303915773/se-2},
}

@article {KMPT-random-walk,
    AUTHOR = {Kapovich, Ilya and Maher, Joseph and Pfaff, Catherine and
              Taylor, Samuel J.},
     TITLE = {Random outer automorphisms of free groups: attracting trees
              and their singularity structures},
   JOURNAL = {Trans. Amer. Math. Soc.},
  FJOURNAL = {Transactions of the American Mathematical Society},
    VOLUME = {375},
      YEAR = {2022},
    NUMBER = {1},
     PAGES = {525--557},
      ISSN = {0002-9947,1088-6850},
   MRCLASS = {20F65 (20E05 20E36 20P05 37C85 57M07 60B15)},
  MRNUMBER = {4358675},
MRREVIEWER = {Vitaly\ A.\ Roman\cprime kov},
       DOI = {10.1090/tran/8472},
       URL = {https://doi.org/10.1090/tran/8472},
}

@article {KP15-rand-walk,
    AUTHOR = {Kapovich, Ilya and Pfaff, Catherine},
     TITLE = {A train track directed random walk on {${\rm Out}(F_r)$}},
   JOURNAL = {Internat. J. Algebra Comput.},
  FJOURNAL = {International Journal of Algebra and Computation},
    VOLUME = {25},
      YEAR = {2015},
    NUMBER = {5},
     PAGES = {745--798},
      ISSN = {0218-1967,1793-6500},
   MRCLASS = {20F65 (20E05 20F28)},
  MRNUMBER = {3384080},
MRREVIEWER = {Dawid\ Kielak},
       DOI = {10.1142/S0218196715500186},
       URL = {https://doi.org/10.1142/S0218196715500186},
}

@article {thurston-norm,
    AUTHOR = {Thurston, William P.},
     TITLE = {A norm for the homology of {$3$}-manifolds},
   JOURNAL = {Mem. Amer. Math. Soc.},
  FJOURNAL = {Memoirs of the American Mathematical Society},
    VOLUME = {59},
      YEAR = {1986},
    NUMBER = {339},
     PAGES = {i--vi and 99--130},
      ISSN = {0065-9266,1947-6221},
   MRCLASS = {57N10 (57M25 57R20 57R30)},
  MRNUMBER = {823443},
MRREVIEWER = {Martin\ Scharlemann},
}

@phdthesis{pfaff-thesis,
    author = {Pfaff, Catherine},
    title = {Constructing and classifying fully irreducible outer automorphisms of free groups},
    school = {Rutgers University},
    year = {2012}
}

@article {BFH00,
    AUTHOR = {Bestvina, Mladen and Feighn, Mark and Handel, Michael},
     TITLE = {The {T}its alternative for {${\rm Out}(F_n)$}. {I}. {D}ynamics
              of exponentially-growing automorphisms},
   JOURNAL = {Ann. of Math. (2)},
  FJOURNAL = {Annals of Mathematics. Second Series},
    VOLUME = {151},
      YEAR = {2000},
    NUMBER = {2},
     PAGES = {517--623},
      ISSN = {0003-486X,1939-8980},
   MRCLASS = {20E36 (20E05 57M07)},
  MRNUMBER = {1765705},
MRREVIEWER = {K.\ Vogtmann},
       DOI = {10.2307/121043},
       URL = {https://doi.org/10.2307/121043},
}

@article {Fried-cross-section,
    AUTHOR = {Fried, David},
     TITLE = {The geometry of cross sections to flows},
   JOURNAL = {Topology},
  FJOURNAL = {Topology. An International Journal of Mathematics},
    VOLUME = {21},
      YEAR = {1982},
    NUMBER = {4},
     PAGES = {353--371},
      ISSN = {0040-9383},
   MRCLASS = {58F25 (58F99)},
  MRNUMBER = {670741},
MRREVIEWER = {M.\ I.\ Brin},
       DOI = {10.1016/0040-9383(82)90017-9},
       URL = {https://doi.org/10.1016/0040-9383(82)90017-9},
}

@book {FLP,
    AUTHOR = {Fathi, Albert and Laudenbach, Fran\c cois and Po\'enaru,
              Valentin},
     TITLE = {Thurston's work on surfaces},
    SERIES = {Mathematical Notes},
    VOLUME = {48},
      NOTE = {Translated from the 1979 French original by Djun M. Kim and
              Dan Margalit},
 PUBLISHER = {Princeton University Press, Princeton, NJ},
      YEAR = {2012},
      ISBN = {978-0-691-14735-2},
   MRCLASS = {57-02 (30Fxx 32G15 57M99)},
  MRNUMBER = {3053012},
MRREVIEWER = {Javier\ Aramayona},
}

@article {BFH97,
    AUTHOR = {Bestvina, M. and Feighn, M. and Handel, M.},
     TITLE = {Laminations, trees, and irreducible automorphisms of free
              groups},
   JOURNAL = {Geom. Funct. Anal.},
  FJOURNAL = {Geometric and Functional Analysis},
    VOLUME = {7},
      YEAR = {1997},
    NUMBER = {2},
     PAGES = {215--244},
      ISSN = {1016-443X,1420-8970},
   MRCLASS = {20E05 (05C25 20F28 57M07)},
  MRNUMBER = {1445386},
MRREVIEWER = {S.\ Andreadakis},
       DOI = {10.1007/PL00001618},
       URL = {https://doi.org/10.1007/PL00001618},
}

@article {CM87,
    AUTHOR = {Culler, Marc and Morgan, John W.},
     TITLE = {Group actions on {${\bf R}$}-trees},
   JOURNAL = {Proc. London Math. Soc. (3)},
  FJOURNAL = {Proceedings of the London Mathematical Society. Third Series},
    VOLUME = {55},
      YEAR = {1987},
    NUMBER = {3},
     PAGES = {571--604},
      ISSN = {0024-6115,1460-244X},
   MRCLASS = {20F32 (57N10)},
  MRNUMBER = {907233},
MRREVIEWER = {Roger\ C.\ Alperin},
       DOI = {10.1112/plms/s3-55.3.571},
       URL = {https://doi.org/10.1112/plms/s3-55.3.571},
}

@article {Paulin,
    AUTHOR = {Paulin, Fr\'ed\'eric},
     TITLE = {The {G}romov topology on {${\bf R}$}-trees},
   JOURNAL = {Topology Appl.},
  FJOURNAL = {Topology and its Applications},
    VOLUME = {32},
      YEAR = {1989},
    NUMBER = {3},
     PAGES = {197--221},
      ISSN = {0166-8641,1879-3207},
   MRCLASS = {57M99 (20F32 54F15 54F50 54H15 57S30)},
  MRNUMBER = {1007101},
MRREVIEWER = {Christopher\ W.\ Stark},
       DOI = {10.1016/0166-8641(89)90029-1},
       URL = {https://doi.org/10.1016/0166-8641(89)90029-1},
}

@article {Fried-stretch-factor,
    AUTHOR = {Fried, David},
     TITLE = {Flow equivalence, hyperbolic systems and a new zeta function
              for flows},
   JOURNAL = {Comment. Math. Helv.},
  FJOURNAL = {Commentarii Mathematici Helvetici},
    VOLUME = {57},
      YEAR = {1982},
    NUMBER = {2},
     PAGES = {237--259},
      ISSN = {0010-2571,1420-8946},
   MRCLASS = {58F15 (58F20)},
  MRNUMBER = {684116},
MRREVIEWER = {M.\ C.\ Irwin},
       DOI = {10.1007/BF02565860},
       URL = {https://doi.org/10.1007/BF02565860},
}

\end{document}